\tikzset{Rightarrow/.style={double equal sign distance,>={Implies},->},
triple/.style={-,preaction={draw,Rightarrow}}}
\theoremstyle{plain}
\newtheorem{thm}{Theorem}[section]
\newtheorem{cor}[thm]{Corollary}
\newtheorem{lem}[thm]{Lemma}
\newtheorem{prop}[thm]{Proposition}
\newtheorem{conj}[thm]{Conjecture}
\theoremstyle{definition}
\newtheorem{rem}[thm]{Remark}
\theoremstyle{definition}
\newtheorem{defn}[thm]{Definition}
\def\makeautorefname#1#2{\expandafter\def\csname#1autorefname\endcsname{#2}}
\newcommand{\cD}{\mathcal{D}}
\newcommand{\bN}{\mathbb{N}}
\newcommand{\bQ}{\mathbb{Q}}
\newcommand{\bZ}{\mathbb{Z}}
\newcommand{\tM}{\mathfrak{M}}
\newcommand{\bg}{{\boldsymbol{g}}}
\newcommand{\bi}{{\boldsymbol{i}}}
\newcommand{\bj}{{\boldsymbol{j}}}
\newcommand{\brak}[1]{\langle #1\rangle}
\newcommand{\pp}[1]{(\!( #1 )\!)}
\newcommand{\n}{\noindent}
\DeclareMathOperator{\Hom}{Hom}
\DeclareMathOperator{\End}{End}
\DeclareMathOperator{\Ind}{Ind}
\DeclareMathOperator{\Res}{Res}
\DeclareMathOperator{\cone}{Cone}
\newcommand{\bKO}{\boldsymbol{K}_0}
\newcommand\E{{\sf{E}}}
\newcommand\F{{\sf{F}}}
\newcommand\Le{{\sf{L}}}
\newcommand\R{{\sf{R}}}
\newcommand{\un}{\mathbbm{1}}
\newcommand{\g}{{\mathfrak{g}}}
\newcommand{\p}{{\mathfrak{p}}}
\newcommand{\bo}{{\mathfrak{b}}}
\newcommand{\lv}{{\mathfrak{l}}}
\newcommand{\np}{{\mathfrak{n}}}
\newcommand{\rp}{{\bar{\mathfrak{p}}}}
\newcommand{\gl}{{\mathfrak{gl}}}
\newcommand{\glN}{{\mathfrak{gl}_{N}}}
\newcommand{\glnn}{{\mathfrak{gl}_{2n}}}
\newcommand{\sln}{{\mathfrak{sl}_{n}}}
\DeclareMathOperator{\amod}{\mathrm{-}mod}
\DeclareMathOperator{\cl}{cl}
\DeclareMathOperator{\HKR}{HKR}
\DeclareMathOperator{\Seq}{Seq}
\definecolor{myblue}{rgb}{0,.5,1}
\definecolor{mypurple}{rgb}{.75,0,.75}
\definecolor{mygreen}{rgb}{.3,.75,.1}
\definecolor{vcolor}{rgb}{0,.5,1}
\newcommand{\tikzdiagh}[2][]{\tikz[#1,anchor= center,very thick,baseline={([yshift=1ex+#2]current bounding box.center)}]}
\newcommand{\tikzdiag}[1][]{\tikzdiagh[#1]{-1.5ex}}
\tikzstyle{tikzdot}=[fill, circle, inner sep=2pt]
\newcommand{\fdot}[1]{ \node  [anchor = center, fill=white, draw=black,circle,inner sep=2pt] at (#1) {} ;}
\newcommand{\plusspacing}{\llap{\phantom{\small ${+}1$}}}
\tikzstyle{stdhl}=[red,double=red!30,double distance=1pt]
\tikzstyle{vstdhl}=[vcolor,double=vcolor!30,double distance=1pt]
\tikzstyle{pstdhl}=[mypurple,double=mypurple!30,double distance=1pt]
\tikzstyle{nail}=[draw,fill=white!30,circle,inner sep=2pt]
\tikzstyle directed=[postaction={decorate,decoration={markings,
    mark=at position #1 with {\arrow{>}}}}]
\tikzstyle rdirected=[postaction={decorate,decoration={markings,
    mark=at position #1 with {\arrow{<}}}}]
\title[2-Verma modules and the Khovanov--Rozansky link homologies]
{2-Verma modules and the Khovanov--Rozansky link homologies}
\author{Gr\'egoire Naisse}
\address{Max-Planck Institute for Mathematics\\
 Vivatsgasse 7 \\ 
53111 Bonn\\ 
Germany}
\email{gregoire.naisse@gmail.com}
\author{Pedro Vaz}
\address{Institut de Recherche en Math\'ematique et Physique\\
Universit\'e Catholique de Louvain\\ 
Chemin du Cyclotron 2\\ 
1348 Louvain-la-Neuve\\ 
Belgium}
\email{pedro.vaz@uclouvain.be}
\begin{document}

%%%%%%%%%%%%%%%%%%%%%%%%%%%%%%%%%%%%
%												%
%	Abstract										%
%												%
%%%%%%%%%%%%%%%%%%%%%%%%%%%%%%%%%%%%
\begin{abstract}
We explain how Queffelec--Sartori's construction of the HOMFLY-PT link polynomial can be interpreted in terms of parabolic Verma modules for $\glnn$. 
Lifting the construction to the world of categorification, we use parabolic 2-Verma modules to give a higher representation theory construction of Khovanov-Rozansky's triply graded link homology. 
\end{abstract}

%%%%%%%%%%%%%%%%	End of file	%%%%%%%%%%%%%

%%% Local Variables:
%%% mode: latex
%%% TeX-master: "../homflypt-Verma"
%%% End:

\maketitle

%%%%%%%%%%%%%%%%%%%%%%
% Sections						  %
%%%%%%%%%%%%%%%%%%%%%%

%%%%%%%%%%%%%%%%%%%%%%%%%%%%%%%%%%%%
%                 					  				  		 %
%	Introduction   				 					 %
%                 					  						 %
%%%%%%%%%%%%%%%%%%%%%%%%%%%%%%%%%%%%

\section{Introduction}\label{sec:intro}

One of the most celebrated homology theories of knots and links in 3-space is Khovanov and
Rozansky's $\glN$-link homology~\cite{KR1} categorifying the $\glN$-link invariant, for $N > 0$.    
Soon after the appearance of~\cite{KR1}, 
the existence of a triply-graded
link homology categorifying the HOMFLY-PT polynomial 
was predicted in~\cite{DGR} by Dunfield, Gukov and Rasmussen, who  
made various conjectures about the structure of such an homology theory.

\smallskip

A rigorous construction of a triply-graded link homology categorifying the HOMFLY-PT polynomial
was given by Khovanov and Rozansky in~\cite{KR2}
(see also~\cite{Kh} for a construction using Hochschild homology of Soergel bimodules). The structure of this link homology was studied by Rasmussen
in~\cite{rasmussen}. Rasmussen defined a family of differentials  
on the KR HOMFLY-PT link homology and showed that, for each $N>0$, 
these differentials give rise to 
a spectral sequence starting at the KR  HOMFLY-PT
homology and converging to the $\glN$-homology.

\smallskip

In this paper we use parabolic Verma modules to give a new interpretation of the HOMFLY-PT polynomial in terms of the representation theory of quantum $\gl_{2n}$.
Using the categorification of Verma modules from our previous work~\cite{naissevaz3}, we lift this procedure, yielding a new construction of KR triply-graded link homology.
We also recover a spectral sequence,  very similar to Rasmussen's~\cite{rasmussen}, from a categorical instance of the fact that we can recover irreducible, integrable representations as quotients of parabolic Verma modules.

%%%%%%%%%%%%%%%%%%%%%%%%%%%%%%%%%%%%%%%%%%%%%%%%%%%%%%
\subsubsection*{Summary of the paper and description of the main results}

In the search for a  construction of the HOMFLY-PT polynomial based on representation theory,
Queffelec and Sartori~\cite{QS1} provided an 
algebraic gadget, called the double Schur algebra,  
which accommodates both the Hecke algebra and the Ocneanu trace under the same roof. 
The double Schur algebra $\dot{S}_{q,\beta}(\ell,k)$
contains two copies of the Schur algebra (the cases $\ell=0$ and $k=0$), and is defined as a quotient of
idempotented quantum $\mathfrak{gl}_{\ell+k}$, whose weight lattice has been shifted by a formal parameter. 
For a link $L$ presented as the closure of a braid $b$ with $n$ strands,
Queffelec and Sartori constructed an element in the double Schur algebra.  
This element is a multiple of a certain idempotent and its coefficient coincides with the HOMFLY-PT polynomial of $L$. 

\smallskip

We show that the construction in~\cite{QS1} finds a natural place in the terms of representations of 
the double Schur algebra. 
In this paper we extend the notion of Weyl modules to double Schur algebras and translate Queffelec and Sartori's
results to this context. Concretely we show that with the choice of highest weight\footnote{We allow ourselves to harmlessly abuse notation here, which will payoff further ahead.} 
$\beta=( \beta,\dotsc,\beta, 0, \dotsc ,0)$ 
(there are $n$ $\beta$'s and $n$ $0$'s in $\beta$), 
the HOMFLY-PT polynomial of $L$
can be obtained from the Weyl module $W(\beta)$ as a map which is
a multiple of the identity, the coefficient being the HOMFLY-PT polynomial of $L$.

\smallskip

As representations of $U_q(\mathfrak{gl}_{\ell+k})$, Weyl modules $W(\mu)$  
are isomorphic to parabolic Verma modules $M^\p(\mu)$ for a certain parabolic subalgebra $\p$.  
The previous paragraph can then be reformulated entirely in terms of parabolic Verma modules.  

\newtheorem*{thma}{Theorem A}
\begin{thma}[\cref{thm:homflyptMp}]
For a link presented as the closure of a braid $b$ with $n$ strands, the construction above
defines an element 
$P^\p(b)\in\End_{U_q(\glnn)}\bigl(M_{\p}( \beta ) \bigr)$  
which is a link invariant. 
It is a multiple of the identity whose coefficient equals the HOMFLY-PT polynomial of the closure of $b$. 
\end{thma}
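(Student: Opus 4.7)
The plan is to transfer the Queffelec--Sartori construction from the double Schur algebra $\dot{S}_{q,\beta}(n,n)$ to the parabolic Verma module $M_\p(\beta)$, using the isomorphism of $U_q(\glnn)$-modules $W(\beta) \cong M_\p(\beta)$ recorded above. Once this bridge is set up, the three assertions of the theorem transport formally to the Weyl-module side, where two of them (link invariance and the HOMFLY-PT identification) are already handled by Queffelec and Sartori in~\cite{QS1}.

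Concretely, I would first view $W(\beta)$ as a representation of $\dot{S}_{q,\beta}(n,n)$ and observe that the Queffelec--Sartori element $QS(b) \in \dot{S}_{q,\beta}(n,n)$, being built from the image of a Hecke algebra inside the double Schur algebra, commutes with the copy of $U_q(\glnn)$ also mapping into $\dot{S}_{q,\beta}(n,n)$. Hence its action on $W(\beta)$ defines an element of $\End_{U_q(\glnn)}\bigl(W(\beta)\bigr)$. Transporting this along the isomorphism $W(\beta) \cong M_\p(\beta)$ then produces $P^\p(b) \in \End_{U_q(\glnn)}\bigl(M_\p(\beta)\bigr)$. Invariance under braid relations and under Markov moves is already part of the Queffelec--Sartori construction at the algebra level and survives the transport verbatim.

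For the scalar statement, I would use that $M_\p(\beta)$ is simple when $\beta$ is treated as a formal parameter (the ``generic'' highest weight regime), so Schur's lemma forces any $U_q(\glnn)$-endomorphism to be a scalar. Evaluating $P^\p(b)$ on the highest weight vector $v_\beta$ then pins the scalar down: on the Schur-algebra side $QS(b)$ is a scalar multiple of the idempotent projecting to the highest weight space, the scalar being the HOMFLY-PT polynomial of the closure of $b$; under the isomorphism, this idempotent corresponds to the projector onto $\bC \cdot v_\beta$ inside $M_\p(\beta)$, so the same scalar appears.

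The main technical obstacle I expect is verifying that the isomorphism $W(\beta) \cong M_\p(\beta)$ genuinely intertwines the Hecke-type generators of the double Schur algebra with the corresponding operators on $M_\p(\beta)$, and that the weight-space idempotent on the Weyl side corresponds to the projector onto the highest weight line on the Verma side. A secondary, bookkeeping-level issue is that the highest weight depends on the formal parameter $\beta$, so one must work over a ground ring in which $\beta$ is an indeterminate in order to guarantee simplicity of $M_\p(\beta)$ and a clean application of Schur's lemma; one should then check that the extracted scalar is a Laurent polynomial, so that no specialisation problem in $\beta$ arises.
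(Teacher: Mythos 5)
Your overall strategy---transporting the Queffelec--Sartori element through the isomorphism $W(\beta)\cong M^{\p}(\beta)$ and inheriting invariance and the HOMFLY-PT identification from their theorem---is exactly the paper's route. However, the step by which you produce an element of $\End_{U_q(\glnn)}\bigl(W(\beta)\bigr)$ does not work as stated. The double Schur algebra $\dot{S}_{q,\beta}(n,n)$ is itself a quotient of idempotented $U_q(\glnn)$, so ``commuting with the copy of $U_q(\glnn)$'' would mean being central, which $P(b)$ certainly is not: it is supported on the single idempotent $1_{((\beta)^n,(0)^n)}$ (and, besides, it is not built from the Hecke image alone---the closure uses the cups and caps, i.e.\ the $E_i,F_i$ with $i\le 0$). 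The Schur--Weyl-type commutation you have in mind holds for the Hecke action on a tensor-space module, where the braiding operator is summed over all weights; the single-weight truncation $1_\mu x 1_\mu$ used here acts on $W(\beta)$ as a scalar times the projection onto the top weight line, which visibly fails to commute with the $F_i$'s. Consequently the subsequent appeal to Schur's lemma has nothing to apply to, since no equivariant endomorphism has yet been produced.

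The repair is the paper's own (and shorter) argument, and it needs neither the commutation claim nor simplicity of $M^{\p}(\beta)$. By Queffelec--Sartori, $\End(1_{((\beta)^n,(0)^n)})$ consists of scalars, so $P(b)=c\,1_{((\beta)^n,(0)^n)}$ with $c\in\bQ(q)[\lambda^{\pm1}]$ the (framed) HOMFLY-PT polynomial; since every weight occurring in the word of $E$'s and $F$'s defining $P(b)$ lies below the top weight, this word acts on the one-dimensional highest weight space of $W(\beta)$, hence of $M^{\p}(\beta)$, by the same scalar $c$. One then \emph{defines} $P^{\p}(b)$ to be this endomorphism of the highest weight object, equivalently $c\cdot\mathrm{id}_{M^{\p}(\beta)}$, which is trivially $U_q(\glnn)$-equivariant; invariance under braid relations and Markov moves is inherited verbatim from the Schur-algebra statement. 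Note also that the ``multiple of the identity'' clause is automatic for any equivariant endomorphism of $M^{\p}(\beta)$: the module is cyclic on a highest weight vector spanning a one-dimensional weight space, so no genericity of $\beta$ or Jantzen-type irreducibility argument is required at this point, and your worry about Laurent-polynomiality of the scalar is already settled on the Schur-algebra side.
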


\smallskip

For $\p$ a parabolic subalgebra of $\g$, we recall the construction of the dg-enhanced KLR algebras $R_\p$ in the form of diagrammatic algebras, as introduced in \cite{naissevaz1,naissevaz2,naissevaz3}, as well as their cyclotomic quotient $R_\p^\mu$. 
When $\p = \g$, they coincide with the usual (cyclotomic) Khovanov--Lauda and Rouquier algebras~\cite{KL1, R1}. 
Then, we explain how categories of dg-modules over $(R_\p^\mu,0)$ categorify parabolic Verma modules, with action of the quantum group given by the usual setup of induction/restriction along the map that add a vertical strand. 
We upgrade this data into a 2-category $\tM^\p(\mu)$, which we call a (parabolic) \emph{2-Verma module}.

\smallskip

The Rickard complex associated to a braid acts on the homotopy category
of complexes of the $\Hom$-categories 
of $\tM^\p(\beta)$ for a certain $\p \subset \gl_{2n}$ (this is a lift of the usual braiding induced by the embedding of
the Hecke algebra into a Schur algebra that in turn embeds canonically in a double Schur algebra). 
For a closure of a braid $b$ this procedure gives a chain complex $C(\cl(b))$.
We prove that the homotopy type of $C(\cl(b))$ is a link invariant.
This means that an isotopy of braid closures induces an isomorphism in the homotopy category of such complexes. 

\newtheorem*{thmc}{Theorem B}
\begin{thmc}[\cref{prop:CmarkovI},\cref{prop:CmarkovII},~\cref{cor:eulerchar},~\cref{thm:HisoKR}]
  The homotopy type of $C(\cl(b))$ is invariant under the Markov moves.
  Its homology groups are triply-graded link invariants and its bigraded Euler
  characteristic is the HOMFLY-PT polynomial of the closure of $b$.
Moreover, $H(b)$ is isomorphic to Khovanov and Rozansky HOMFLY-PT homology $\HKR(b)$, after regrading. 
\end{thmc}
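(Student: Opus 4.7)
\medskip

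\textbf{Proof plan for Theorem C.}
The plan is to handle the three assertions in sequence, since each builds naturally on the previous one. For the Markov invariance of $C(\cl(b))$, I would first establish invariance under braid relations at the level of Rickard complexes acting on the homotopy category of the $\Hom$-categories of $\tM^\p(\beta)$. This is a standard check: the Rickard complex $\cT_i$ associated with a generator $\sigma_i$ is built from the 2-categorical lift of the functors $\F_i^\beta, \E_i^\beta$ (from Theorem B), and the braid relations follow from the $\sln$ 2-representation structure on $\tM^\p(\beta)$ exactly as in the compact-integrable case, once one verifies that the necessary adjunction and decomposition data survive the passage to the parabolic/Verma setting. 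Conjugation invariance (Markov II) then reduces to the statement that closure is invariant under cyclic rotation of a braid word, which at the categorical level amounts to a trace-like property of $\Hom_{\tM^\p(\beta)}$.

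The genuinely delicate step is Markov I (stabilization). Here I would exploit the particular choice of highest weight $\beta=(\beta,\dots,\beta,0,\dots,0)$: adding a new strand corresponds to extending the weight with one more copy of $\beta$ followed by one more $0$, which is precisely the situation where the superalgebra $A_m$ from~\cite{naissevaz1,naissevaz2} contributes the extra formal parameter. The stabilization computation thus reduces to an explicit identification of the complex $C(b\sigma_n^{\pm 1})$ as the tensor product of $C(b)$ with a standard ``cap'' complex for the new strand, and computing that this cap complex is homotopy equivalent (up to the expected grading shift in $q$ and $\lambda$) to the identity 1-morphism of the Verma piece. I expect this to be the main technical obstacle, since it is where the formal parameter $\lambda$ enters nontrivially and where the cone-boundedness of our supermodule category is used to control the otherwise infinite direct sums.

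For the Euler characteristic, I would apply the decategorification statement of Theorem B strandwise: the Rickard complex for $\sigma_i$ decategorifies to the image of $\sigma_i$ under the Hecke algebra action on $M^\p(\beta)$ obtained (via the Schur/double-Schur embedding) from Queffelec--Sartori's construction, and closing up a braid corresponds to specializing to the highest weight vector. By Theorem A this endomorphism is the HOMFLY-PT polynomial of $\cl(b)$ times the identity, so the graded Euler characteristic of $C(\cl(b))$ (after the $\pi=-1$ specialization that links our topological Grothendieck group to $M^\p(\beta)$) equals the HOMFLY-PT polynomial.

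Finally, for $H(b)\cong \HKR(b)$, the strategy I would follow is to compare endomorphism categories rather than match invariants by hand. Concretely, I would identify the diagrammatic $\Hom$-categories of $\tM^\p(\beta)$ with the category of singular Soergel bimodules (or their diagrammatic presentation) through which Khovanov's Hochschild-homology formulation of HOMFLY-PT homology factors; under this identification, the Rickard complex for $\sigma_i$ should match the Rouquier complex, and the categorical trace realizing the closure should match Hochschild homology, modulo the grading reparametrizations tracked by Theorem B. The main difficulty here is pinning down the isomorphism with Soergel bimodules with the correct grading conventions; once that is in place, invariance of the homotopy type (part 1) ensures that any two models satisfying Markov invariance and agreeing on generators agree on all links.
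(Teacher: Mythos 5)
Your plan for the Markov moves and the Euler characteristic is essentially the paper's route: stabilization is handled exactly as you describe, by an explicit computation showing the "capped" Rickard complex is homotopy equivalent to a $(q,\lambda,\Pi)$-shifted identity, and the Euler characteristic follows from \fullref{thm:pcatthm} together with Theorem A. One caveat on the conjugation move: in this setting the closure is implemented by explicit words in the $\E_i$'s and $\F_i$'s following the pattern~\eqref{eq:closingbraid}, so "a trace-like property of $\Hom_{\tM^\p(\beta)}$" is not something you get for free; the paper has to prove the sliding isomorphisms~\eqref{eq:Bi-stepleft} and~\eqref{eq:Bitraceup} by hand, using the ladder-isotopy consequences of \fullref{thm:sesFunct} and the isomorphism $\E_i\E_{i-1}\E_i\F_i\un \cong \E_{i-1}\F_{i-1}\E_i\E_{i-1}\un$ from Queffelec--Rose. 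Your outline would need these computations to be a proof, but the strategy is the same.

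The genuine gap is in your last step. You propose to prove $H(b)\cong\HKR(b)$ by identifying the $\Hom$-categories of $\tM^{\p}(\beta)$ with (singular) Soergel bimodules and the closure functor with Hochschild homology, matching Rickard with Rouquier complexes. That identification of the closure functor $\Phi$ with Hochschild homology is precisely what the paper states as an open \emph{conjecture} and does not prove; treating it as a difficulty of "pinning down grading conventions" underestimates it, and the whole comparison collapses without it. The paper instead proves a weaker statement that suffices: \fullref{lem:Hgfree} shows $H(\Gamma)$ is free over the dot ring $S$ for every web $\Gamma$ (via Wu's induction scheme as in Rasmussen), after checking the MOY relations $\mathrm{0}$--$\mathrm{III}$ (II and III already hold in $\dot{\mathcal{U}}(\sln)$, 0 and I come from the exact sequence~\eqref{eq:EFrel-k}), and then Rasmussen's axiomatic argument pins down the homology. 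Your fallback principle --- that two Markov-invariant constructions agreeing "on generators" must agree on all links --- is not valid; agreement on braid generators or on the unknot does not determine a link homology theory, which is exactly why the MOY-calculus/freeness argument is needed.
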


Introducing a non-trivial differential $d_N$ on $R_\p^\beta$ turns it into another dg-algebra. 
We can then recover the usual cyclotomic quotient $R_{\glnn}^\Lambda$ of the KLR algebra
associated with $\glnn$, in the sense that the former is formal and quasi-isomorphic
to the latter. 
The work of Mackaay and Yonezawa in~\cite{MY} implies that replacing $R_\p^\beta$ by $R_{\glnn}^\Lambda$ in
the construction above produces Khovanov and Rozansky's $\glN$-link homology
$\HKR_N(\cl(b))$ for the closure of~$b$.

\smallskip 

Finally, we show that the differential $d_N$ descends to $\HKR(b)$ and engenders a spectral sequence
starting at $\HKR(b)$ and converging to $\HKR_N(b)$.

\smallskip

We have tried to keep this paper self-contained while reducing major technicalities. 
This way we hope to have made it readable by either topologists without a strong background in (higher) representation theory and by (higher) representation theorists without a strong background in topology.

\subsubsection*{Acknowledgments}
The authors would like to thank Paul Wedrich for 
 helpful discussions and for valuable comments on a preliminary version of this paper.
We also thank Jonathan Grant for 
 helpful discussions and comments on a preliminary version of this paper.
G.N. was a Research Fellow of the Fonds de la Recherche Scientifique - FNRS, under Grant no.~1.A310.16 when starting working on this project. 
G.N. is grateful to the Max Planck Institute for Mathematics in Bonn for its hospitality and financial support.
P.V. was supported by the Fonds de la Recherche Scientifique - FNRS under Grant no.~J.0135.16.

%%%%%%%%%%%%%%%%	End of file	%%%%%%%%%%%%%

%%% Local Variables:
%%% mode: latex
%%% TeX-master: "../homflypt-Verma"
%%% End:

%%%%%%%%%%%%%%%%%%%%%%%%%%%%%%%%%%%%%%
%%%                               									%%%
%%%  The non-categorified story   							%%%
%%%                               									%%%
%%%%%%%%%%%%%%%%%%%%%%%%%%%%%%%%%%%%%

\section{Parabolic Verma modules and link invariants}\label{sec:noncat}

%%%%%%%%%%%%%%%%%%%%%%%%%%%%%%%%%%%%%%%%%
\subsection{Link invariants}

In~\cite{QS1} Queffelec and Sartori proposed an algebraic method to construct
the HOMFLY-PT and the Alexander polynomials of links in 3-space. 
They defined a generalization of the idempotented $q$-Schur algebra called
the \emph{doubled Schur algebra}.  
In this section we briefly recall the basics of the construction in~\cite{QS1}, 
and explain how it fits within the theory of parabolic Verma modules.

%%%%%%%%%%%%%%%%%%%%%%%%%%%%%%%%%%%%%%%%
\subsubsection{The doubled Schur algebra}

In the following, we let $\beta$ be a formal parameter. We write $\lambda$ for $q^\beta$,
and work over the ring $\bZ(q,\lambda)$. 
We denote by $\Lambda_{\ell,k}^\beta$ the set of sequences
$(\mu_{-\ell+1},\dotsc ,\mu_{0},\dotsc,\mu_{k})\in (\beta-\bN_0)^{\ell}\times \bN_0^k$,  
for $\ell\geq 0$ and $k\geq 0$.

\begin{rem}
  We follow this convention, slightly different from~\cite{QS1},
because we want to relate it later with highest weight, 
rather than lowest weight, parabolic Verma modules, and
it will allow us to keep the notation simple. 
\end{rem}
 
Let $I_{\ell,k} :=\{-\ell+1,\dotsc, 0,\dotsc, k-1\}$. Let $\alpha_i:=(0,\dotsc ,0,1,-1,0,\dotsc, 0)\in\bZ^{\ell+k}$, the entry $1$ being at position
$i\in I_{\ell,k}$. For $\mu_i \in \bZ \sqcup (\bZ+\beta)$, let 
\[
  [\mu_i]_q :=
  \begin{cases}
  \dfrac{q^{\mu_i} - q ^{-\mu_i}}{q-q^{-1}}, &\text{ if } \mu_i \in \bZ,
  \\[2.5ex]
\dfrac{\lambda q^{\mu_i-\beta} - \lambda^{-1} q ^{\beta-\mu_i}}{q-q^{-1}} , &\text{ if } \mu_i \in \bZ + \beta,
\end{cases}
\]
be the (shifted) quantum number.

\begin{defn}\label{def:doubleschur}
  The doubled Schur algebra $\dot{S}_{q,\beta}(\ell,k)$ is the $\bZ(q,\lambda)$-linear 
  category defined by the following data:
\begin{itemize}
  \item Objects: $1_\mu$ for $\mu\in (\beta+\bZ)^{\ell}\times \bZ^k$, together with a zero object.
  \item Morphisms: generated by morphisms
    \[
   1_{\mu+\alpha_i}  E_i1_\mu \in\Hom(1_\mu,1_{\mu+\alpha_i}),
\mspace{30mu}
     1_{\mu-\alpha_i}F_i1_\mu \in \Hom(1_\mu,1_{\mu-\alpha_i}),
    \]
for $i\in I_{\ell,k}$, 
    together with the identity morphism of $1_\mu$ (denoted by the same symbol).
    The morphisms are subject to the following relations:
    \begin{equation}
    (E_iF_j - F_jE_i)1_\mu = \delta_{i,j}[\mu_i-\mu_{i+1}]_q1_\mu,
    \end{equation}
    \begin{align}
    E_iE_j1_\mu &= E_jE_i1_\mu, &
    F_iF_j1_\mu = F_jF_i1_\mu, &
    &\text{if $|i-j| > 1$,}
    \label{eq:EiEjcommut}
    \end{align}
    \begin{equation}
    \begin{split}
    E_i^2 E_{i\pm 1}1_\mu + E_{i\pm 1}E_i^21_\mu &= [2]_qE_iE_{i\pm 1}E_i1_\mu, \\
    F_i^2 F_{i\pm 1}1_\mu + F_{i\pm 1}F_i^21_\mu &=  [2]_qF_iF_{i\pm 1}F_i1_\mu.
    \end{split}
    \end{equation}
and $1_\mu = 0 \in\End(1_\mu,1_\mu)$ whenever $\mu\notin \Lambda_{\ell,k}^\beta$.
\end{itemize}
We often write $E_i1_\mu$ or $1_{\mu+\alpha_i}  E_i$ for $1_{\mu+\alpha_i}  E_i 1_\mu$, and similarly for $F_i$. 
\end{defn}

For $\ell=0$, we recover the idempotented $q$-Schur algebra  $\dot{S}_q(k)_d$, with  $d=\mu_1 + \dotsm + \mu_k$,
and for $k=0$, we recover $\dot{S}_q(\ell)_d$, with
$d= \ell\beta - (\mu_{-\ell+1} + \dotsm + \mu_0)$.  
There are canonical inclusions
$\dot{S}_q(k)_d\hookrightarrow \dot{S}_{q,\beta}(\ell,k)$
sending
\begin{align*}
  1_{\mu_1,\dotsc,\mu_k}&\mapsto 1_{0,\dotsc,0,\mu_1,\dotsc,\mu_k},& F_i&\mapsto F_{i+\ell+1},& E_i&\mapsto E_{i+\ell+1},
\intertext{ and
$\dot{S}_q(\ell)_d\hookrightarrow \dot{S}_{q,\beta}(\ell,k)$
sending}
1_{\mu_{-\ell+1},\dotsc,\mu_0}&\mapsto 1_{\mu_{-\ell+1},\dotsc,\mu_0,0,\dotsc,0},& F_i&\mapsto F_{i},& E_i&\mapsto E_{i} . 
\end{align*}

%%%%%%%%%%%%%%%%%%%%%%%%%%%%%%%%%%%%%%%%%%%%%%%%%%%%%%%%%%%%%
\subsubsection{Ladder diagrams}\label{ssec:linkinvs}

As explained in~\cite{QS1}, the doubled Schur algebra can be given a presentation in terms of ladder diagrams. 
These are generated by the ladder operators below: 

\begin{align*}
\allowdisplaybreaks 
1_\mu &\mapsto
\tikzdiag[xscale=1]{
\draw [<-] (-1,-.4) node[below]{\tiny $\mu_{-\ell+1}$} -- (-1,.5)  node[above]{\tiny $\mu_{-\ell+1}$};
\node at (0,0) {$\cdots$};
\draw [<-] (1,-.4) node[below]{\tiny $\mu_{-1}$} -- (1,.5)  node[above]{\tiny $\mu_{-1}$};
\draw [<-] (2,-.4) node[below]{\tiny $\mu_{0}$} -- (2,.5)  node[above]{\tiny $\mu_{0}$};
\draw [->] (3,-.4) node[below]{\tiny $\mu_{1}$} -- (3,.5)  node[above]{\tiny $\mu_{1}$};
\draw [->] (4,-.4) node[below]{\tiny $\mu_{2}$} -- (4,.5)  node[above]{\tiny $\mu_{2}$};
\node at (5.0,0) {$\cdots$};
\draw [->] (6,-.4) node[below]{\tiny $\mu_{k}$} -- (6,.5)  node[above]{\tiny $\mu_{k}$};
}
\\
1_{\mu-\alpha_i}F_i1_\mu &\mapsto
\tikzdiag[xscale=1]{
\draw [<-] (-1,-.4) node[below]{\tiny $\mu_{-\ell+1}$} -- (-1,.5)  node[above]{\tiny $\mu_{-\ell+1}$};
\node at (0,0) {$\cdots$};
\draw  (1,-.4) node[below]{\tiny $\mu_{i{-}1}$} -- (1,.5)  node[above]{\tiny $\mu_{i{-}1}$};
\draw  (2,-.4) node[below]{\tiny $\mu_{i}$} -- (2,.5)  node[above,xshift=-.25ex]{\tiny $\mu_{i}{-}1$};
\draw [directed=.6] (2,-.05) -- (3,.05);
\draw  (3,-.4) node[below]{\tiny $\mu_{i{+}1}$} -- (3,.5)  node[above,xshift=.25ex]{\tiny $\mu_{i{+}1}{+}1$};
\draw  (4,-.4) node[below]{\tiny $\mu_{i{+}2}$} -- (4,.5)  node[above]{\tiny $\mu_{i{+}2}$};
\node at (5.0,0) {$\cdots$};
\draw [->] (6,-.4) node[below]{\tiny $\mu_{k}$} -- (6,.5)  node[above]{\tiny $\mu_{k}$};
}
\\
1_{\mu+\alpha_i}E_i1_\mu &\mapsto
\tikzdiag[xscale=1]{
\draw [<-] (-1,-.4) node[below]{\tiny $\mu_{-\ell+1}$} -- (-1,.5)  node[above]{\tiny $\mu_{-\ell+1}$};
\node at (0,0) {$\cdots$};
\draw  (1,-.4) node[below]{\tiny $\mu_{i{-}1}$} -- (1,.5)  node[above]{\tiny $\mu_{i{-}1}$};
\draw  (2,-.4) node[below]{\tiny $\mu_{i}$} -- (2,.5)  node[above,xshift=-.25ex]{\tiny $\mu_{i}{+}1$};
\draw [directed=.6] (3,-.05) -- (2,.05);
\draw  (3,-.4) node[below]{\tiny $\mu_{i{+}1}$} -- (3,.5)  node[above,xshift=.25ex]{\tiny $\mu_{i{+}1}{-}1$};
\draw  (4,-.4) node[below]{\tiny $\mu_{i{+}2}$} -- (4,.5)  node[above]{\tiny $\mu_{i{+}2}$};
\node at (5.0,0) {$\cdots$};
\draw [->] (6,-.4) node[below]{\tiny $\mu_{k}$} -- (6,.5)  node[above]{\tiny $\mu_{k}$};
}
\end{align*}
Note that edges labeled with $\bN_0$ are oriented upwards, 
while edges labeled with $\beta-\bN_0$ are oriented downwards. 
Multiplication corresponds to concatenation of diagrams, and in our conventions
$ab$ consists of placing the diagram of $a$ on the top of the one for $b$, if the labels match, and zero otherwise.  
On the $\bZ(q,\lambda)$-space spanned by all such webs 
we impose the relations of the doubled Schur algebra
from~\cref{def:doubleschur}. In particular, thanks to \cref{eq:EiEjcommut}, we can consider such ladder diagrams up to planar isotopy exchanging the height of distant rungs. 

\medskip

We are interested by ladder diagrams where the weights $\mu_i$ are of the form $\beta$ or $\beta-1$ if $i  \leq 0$, and $0,1$ or $2$ if $i > 0$. We draw edges carrying a weight $1$ or $\beta-1$ as solid, with weight $0$ or $\beta$ as dotted, and with weight $2$ as double solid. 
 The edge forming the rungs of $E_i$ and $F_i$ are always drawn solid. For example, we have
\[
F_1F_01_{(\beta,0,0)} \mapsto
\tikzdiag[yscale=.5]{
	\draw[dotted] (0,0) node[below]{\small $\beta$} -- (0,3)  node[above]{\small $\beta{-}1$};
	\draw[dotted] (1,0) node[below]{\small $0$} -- (1,3)  node[above]{\small $0$};
	\draw[dotted] (2,0) node[below]{\small $0$} -- (2,3)  node[above]{\small $1$};
	\draw[directed=.6] (0,1) -- (1,1);
	\draw[directed=.55] (0,3) -- (0,1);
	\draw[directed=.7] (1,1) -- (1,2);
	\draw[directed=.6] (1,2) -- (2,2);
	\draw[directed=.7] (2,2) -- (2,3);
}
\]

\subsubsection{Link invariants from a doubled Schur algebra}\label{ssec:linkinvs}

We consider links presented in the form of closures of braids. 
To start with, let $b \in B_n$ be a braid diagram in $n$ strands.
For such a diagram we assign an element of $\dot{S}_{q}(n)_{n}$ using a
well known rule originally due to Lusztig~\cite[Definition 5.2.1]{lusztig}.
This extends immediately to an element of  $\dot{S}_{q,\beta}(n,n)$ 
from the embedding $\dot{S}_{q}(n)_{n}\hookrightarrow\dot{S}_{q,\beta}(n,n)$. 
Denote by $( (\beta-1)^n, (1)^n )$ the label
consisting of $n$ entries equal to $\beta-1$ and $n$ entries equal to $1$.
For $\sigma_i$ (resp. $\sigma_i^{-1}$)
a positive (resp. negative) crossing between the $i$th and the $(i+1)$th strands 
(counting from the left), 
we have: 
\begin{align*}
	\sigma_i &\mapsto -F_{i}E_{i}1_{((\beta-1)^n,(1)^n)} + q^{-1}1_{((\beta -1)^n,(1)^n)}, \\
	\sigma_i^{-1} &\mapsto  q 1_{( (\beta -1)^n,(1)^n)}- E_{i}F_{i}1_{( (\beta -1)^n,(1)^n)} .
\end{align*}
In terms of pictures, we draw it locally as:
\begin{align*}
 \tikzdiag{
	\draw[->] (1,0) -- (0,1);
	\draw[fill=white,draw=white] (.5,.5) circle (1 ex);
	\draw[->] (0,0) -- (1,1);
}
\ 
 &\mapsto 
 - \phantom{q} \  
 \tikzdiagh{-4}{
 	\draw (0,0) node[below]{\tiny $1$} -- (0,.25); \draw [double] (0,.25)  -- (0,.75); \draw[->] (0,.75) -- (0,1);
 	\draw (1,0) node[below]{\tiny $1$} -- (1,.25);\draw [dotted] (1,.25)  -- (1,.75); \draw[->] (1,.75) -- (1,1);
 	\draw[directed=.6] (1,.25) -- (0,.25);
 	\draw[directed=.6] (0,.75) -- (1,.75);
 }
 \ + 
 q^{-1}
 \ 
 \tikzdiagh{-4}{
 	\draw[->] (0,0) node[below]{\tiny $1$} -- (0,1);
 	\draw[->] (1,0) node[below]{\tiny $1$} -- (1,1);
 }
 \\
 \tikzdiag{
	\draw[->] (0,0) -- (1,1);
	\draw[fill=white,draw=white] (.5,.5) circle (1 ex);
	\draw[->] (1,0) -- (0,1);
}
\ 
 &\mapsto 
 \phantom{-}
 q
 \ 
 \tikzdiagh{-4}{
 	\draw[->] (0,0) node[below]{\tiny $1$} -- (0,1);
 	\draw[->] (1,0) node[below]{\tiny $1$} -- (1,1);
 }
 \ - \ \phantom{q^{-1}}
 \tikzdiagh{-4}{
 	\draw (0,0) node[below]{\tiny $1$} -- (0,.25); \draw [dotted] (0,.25)  -- (0,.75); \draw[->] (0,.75) -- (0,1);
 	\draw (1,0) node[below]{\tiny $1$} -- (1,.25); \draw [double] (1,.25)  -- (1,.75); \draw[->] (1,.75) -- (1,1);
 	\draw[directed=.6] (0,.25) -- (1,.25);
 	\draw[directed=.6] (1,.75) -- (0,.75);
 }
\end{align*}

\medskip

For a braid $b \in B^n$, let $\cl(b)$ be its closure on the left, 
as in the diagram below:
\[
\cl(b) :=\ 
\raisebox{-8.2ex}{
  \reflectbox{
\begin{tikzpicture}[xscale=.45,yscale=.3]
	\draw [very thick] (-1.25,-.5) rectangle (1.25,1.5);
	\draw [very thick, ->] (-1,1.5) to (-1,2.5);
	\draw [very thick, ->] (1,1.5) to (1,2.5);
	\draw [very thick, directed=0.3] (-1,-1.5) to (-1,-.5);
	\draw [very thick, directed=0.3] (1,-1.5) to (1,-.5);
	\draw [very thick, directed=0.55] (3,2.5) to (3,-1.5);
	\draw [very thick, directed=0.55] (5,2.5) to (5,-1.5);
	\draw [very thick] (1,2.5) to [out=90,in=180] (2,3.5) to [out=0,in=90] (3,2.5);
	\draw [very thick] (-1,2.5) to [out=90,in=180] (2,5.5) to [out=0,in=90] (5,2.5);
	\draw [very thick] (1,-1.5) to [out=270,in=180] (2,-2.5) to [out=0,in=270] (3,-1.5);
	\draw [very thick] (-1,-1.5) to [out=270,in=180] (2,-4.5) to [out=0,in=270] (5,-1.5);
	\node at (0,.55) {\reflectbox{$b$}};
	\node at (2,4.75) {\small $\vdots$};
	\node at (0.1,2.75) {\small $\cdots$};
	\node at (4,2.75) {\small $\cdots$};
	\node at (2,-3.25) {\small $\vdots$};
	\node at (0.1,-1.8) {\small $\cdots$};
	\node at (4,-1.8) {\small $\cdots$};
\end{tikzpicture}}}
\]
To $\cl(b)$ we assign an element of $\dot{S}_{q,\beta}(n,n)$ obtained by
adding cups and the bottom and caps at the top, using the following pattern to get a ladder diagram: 
\begin{equation}\label{eq:closingbraid}
\raisebox{-20pt}{
  \begin{tikzpicture}
\node at (-.5,0) {$\cdots$};
  \draw [very thick] (0.25,-.4) -- (0.25,.5);
  \draw [very thick,dotted] (0.25,-.85)  -- (0.25,-.4);
  \draw [very thick,dotted] (1,-.35)  -- (1,.1);
  \draw [very thick] (1,-.9)  -- (1,-.4);
  \draw [very thick,directed=.6] (0.25,-.4) -- (1,-.4);
  \draw [very thick] (1,.1)  -- (1,.5);
\draw [very thick,directed=.6] (1,.1) -- (1.75,.1);
\draw [very thick,->] (1.75,.1)  -- (1.75,.5);
\draw [very thick] (1.75,-.9)  -- (1.75,-.4);
\draw [very thick,dotted] (1.75,-.35)  -- (1.75,.1);
\draw [very thick,->] (2.5,-.4)  -- (2.5,.5);
\draw [very thick,dotted] (2.5,-.85)  -- (2.5,-.4);
\draw [very thick,directed=.6] (1.75,-.4) -- (2.5,-.4);
\draw [very thick,directed=.6] (1,-.89) -- (1.75,-.89);
\node at (3.25,0) {$\cdots$};
\end{tikzpicture}}
  \end{equation}
and similar for the top of $\cl(b)$. 

\medskip

As explained in~\cite{QS1}, this procedure gives an element $P(b)\in S_{q,\beta}(n,n)$ which is an endomorphism
of $1_{( (\beta)^n,(0)^n )}$, which in turn implies that $P(b)\in \bZ(q,\lambda)$. 
One of the main results in~\cite{QS1} is the following:

\begin{thm}[{\cite[Theorems 3.1 and 3.8]{QS1}}]\label{thm:QS}
  For a braid $b \in B_n$, the element $P(b)$ is a framed link invariant which equals the HOMFLY-PT polynomial
  of the closure of $b$. 
\end{thm}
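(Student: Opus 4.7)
The plan is to follow the standard strategy for verifying that a candidate, defined on braids, yields a link invariant: (i) show that $P(b)$ depends only on the braid group element, i.e.\ the braid group relations are satisfied in $\dot{S}_{q,\beta}(n,n)$ at the weight $((\beta-1)^n,(1)^n)$; (ii) show invariance under the Markov I (conjugation) and Markov II (stabilization) moves so that $P(\cl(b))$ is well-defined on links up to framing; (iii) identify the result with the HOMFLY-PT polynomial via its skein-theoretic characterization.

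For (i), recall that $\sigma_i \mapsto q^{-1}\mathbf{1} - F_i E_i$ and $\sigma_i^{-1} \mapsto q\mathbf{1} - F_i E_i$ at the fixed weight. Using relation (2) of \fullref{def:doubleschur}, $(E_iF_i - F_iE_i)1_\mu = [\mu_i - \mu_{i+1}]_q 1_\mu$; on the weight $((\beta-1)^n,(1)^n)$, consecutive entries acted on by the braid are equal, hence the commutator vanishes on the relevant weight spaces. This forces $F_iE_i$ to be an idempotent (up to scalar), yielding the Hecke quadratic relation for $\sigma_i$. Combined with the Serre relations of the algebra (relation (3)), the braid relations $\sigma_i\sigma_{i+1}\sigma_i = \sigma_{i+1}\sigma_i\sigma_{i+1}$ and far commutativity follow by a short direct computation, essentially the standard Hecke algebra argument.

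For (ii), Markov I is conjugation invariance in the braid group, and follows immediately from the symmetry of the closure pattern \eqref{eq:closingbraid} together with the braid relations established in (i). Markov II requires showing that replacing a braid $b\in\mathrm{Br}_n$ with $b\sigma_n^{\pm 1}\in\mathrm{Br}_{n+1}$ leaves $P(\cl(b))$ unchanged up to the framing factor $\lambda^{\pm 1}$. Diagrammatically this is a computation inside $\dot{S}_{q,\beta}(n+1,n+1)$: one slides the cup/cap introduced by closure past the new Lusztig crossing, applies the zig-zag identities inherent in the $\dot{S}_{q,\beta}$ web calculus, and reduces using relation (2) with the shifted quantum integer $[\mu_i - \mu_{i+1}]_q$. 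Here the precise value $\beta - 1$ in the upper row and $1$ in the lower row is exactly what produces a scalar $\lambda$ (or $\lambda^{-1}$) times the diagram for $\cl(b)$, cancelling the extra braid generator and the new strand. This is the delicate step.

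For (iii), the crucial identity is immediate from the explicit formulas: $\sigma_i - \sigma_i^{-1} = (q^{-1}-q)\,1_{((\beta-1)^n,(1)^n)}$, where the right-hand side diagrammatically represents the oriented smoothing. Thus $P$ satisfies the HOMFLY-PT skein relation. The value on the unknot is computed directly by closing the identity braid on one strand and evaluating via the web rules, yielding the appropriate normalization in terms of $[\beta]_q$ and $\lambda$. By uniqueness of HOMFLY-PT from its skein relation and unknot value, $P(\cl(b))$ equals the HOMFLY-PT polynomial of $\cl(b)$. The main obstacle is the Markov II verification in (ii): tracking carefully the zig-zag moves and the shifted quantum integers is where the peculiar choice of weight $((\beta-1)^n,(1)^n)$ and the doubled nature of $\dot{S}_{q,\beta}$ conspire to produce the correct framing factor $\lambda$; everything else is more or less routine once this step is in place.
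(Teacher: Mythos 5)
Your outline follows the same route as the proof this paper imports from Queffelec--Sartori (the paper itself only cites Theorems 3.1 and 3.8 of~\cite{QS1} and sketches exactly this strategy): braid invariance of $P(b)$, invariance under the Markov moves up to the framing factor $\lambda^{\pm 1}$, the unknot value $\frac{\lambda-\lambda^{-1}}{q-q^{-1}}=[\beta]_q$, and the skein relation coming from $\sigma_i-\sigma_i^{-1}=(q^{-1}-q)1_{((\beta-1)^n,(1)^n)}$, which together characterize HOMFLY-PT. So the architecture is right; two of your steps, however, are not justified as stated.

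First, in (i) the vanishing of $(E_iF_i-F_iE_i)1_\mu=[\mu_i-\mu_{i+1}]_q 1_\mu$ at $\mu=((\beta-1)^n,(1)^n)$ does \emph{not} by itself force $F_iE_i$ to be idempotent up to scalar; a commuting pair $E_iF_i=F_iE_i$ carries no quadratic information in general. What you need in addition is that $E_i^2 1_\mu=0$, which here follows from relation (4) in the definition of the doubled Schur algebra, because $\mu+2\alpha_i$ has the entry $-1\notin\bN_0$ and the corresponding idempotent is zero. With that, $(F_iE_i)^2 1_\mu=F_i(F_iE_i+[2]_q)E_i1_\mu=[2]_qF_iE_i1_\mu$, and the Hecke quadratic relation (hence, with the Serre relations, the braid relations) follows. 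Second, Markov I is not ``immediate from the symmetry of the closure pattern'': conjugation invariance amounts to sliding a crossing, i.e.\ a term $F_iE_i$, from the top of the closure~\eqref{eq:closingbraid} around to the bottom, and this uses genuine ladder identities in $\dot{S}_{q,\beta}(n,n)$ --- commuting $E$'s and $F$'s across the $\beta$-shifted cups and caps, where the precise weights $(\beta-1,\dotsc)$ versus $(\beta,\dotsc)$ matter --- exactly the decategorified counterparts of the moves~\eqref{eq:Bi-stepleft} and~\eqref{eq:Bitraceup} that the paper has to prove by hand in its categorified Markov-move argument. So Markov I deserves the same care you correctly reserve for Markov II; with those two points repaired, your plan coincides with the cited proof.
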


The proof of~\cref{thm:QS} goes by first verifying that $P(b)$ is a braid invariant, and then checking
invariance under the Markov moves. In the process of showing that it equals the HOMFLY-PT polynomials,
it is shown that it gives the value $\frac{\lambda-\lambda^{-1}}{q-q-1}$ for the unknot, and it 
satisfies
\[
P\left(
\raisebox{-15pt}{
\reflectbox{
  \begin{tikzpicture}[scale=.25]
        \draw [very thick] (-1.5,-1.5)  -- (1.5,1.5);
	\draw [very thick] (-0.5,0.5) -- (-1.5,1.5);
	\draw [very thick] (1.5,-1.5)  -- (0.5,-0.5);
        \draw [very thick,directed=.6] (3.5,1.5)  -- (3.5,-1.5);
	\draw [very thick] (3.5,-1.5)  -- (1.5,-1.5);
        \draw [very thick] (3.5, 1.5)  -- (1.5, 1.5);
 \draw [very thick, dotted] (1.5,1.8) -- (1.5,2.4);
 \draw [very thick, dotted] (3.5,1.8) -- (3.5,2.4);
 \draw [very thick,->] (-1.5,1.5) -- (-1.5,2.4);
 \draw [very thick, dotted] (1.5,-1.8) -- (1.5,-2.4);
 \draw [very thick, dotted] (3.5,-1.8) -- (3.5,-2.4);
 \draw [very thick] (-1.5,-1.5) -- (-1.5,-2.4);
\end{tikzpicture}}}\
\right)
=\lambda 
P\left(
\raisebox{-15pt}{
  \reflectbox{
\begin{tikzpicture}[scale=.25]
	\draw [very thick,->] (-1.5,-2.4) -- (-1.5,2.4);
	\draw [very thick,dotted] (1,-2.4) -- (1,2.4);
       	\draw [very thick,dotted] (3,-2.4) -- (3,2.4);
\end{tikzpicture}}}\
\right),
\qquad
P\left(
\raisebox{-15pt}{
  \reflectbox{
\begin{tikzpicture}[scale=.25]
        \draw [very thick] (-1.5,-1.5)  -- (-0.5,-0.5);
	\draw [very thick] (0.5,0.5) -- (1.5,1.5);
	\draw [very thick] (1.5,-1.5)  -- (-1.5,1.5);
	\draw [very thick,directed=.6] (3.5,1.5)  -- (3.5,-1.5);
	\draw [very thick] (3.5,-1.5)  -- (1.5,-1.5);
        \draw [very thick] (3.5, 1.5)  -- (1.5, 1.5);
 \draw [very thick, dotted] (1.5,1.8) -- (1.5,2.4);
 \draw [very thick, dotted] (3.5,1.8) -- (3.5,2.4);
 \draw [very thick,->] (-1.5,1.5) -- (-1.5,2.4);
 \draw [very thick, dotted] (1.5,-1.8) -- (1.5,-2.4);
 \draw [very thick, dotted] (3.5,-1.8) -- (3.5,-2.4);
 \draw [very thick] (-1.5,-1.5) -- (-1.5,-2.4);
\end{tikzpicture}}}\
\right)
=\lambda^{-1}\ 
P\left(
\raisebox{-15pt}{
  \reflectbox{
\begin{tikzpicture}[scale=.25]
	\draw [very thick,->] (-1.5,-2.4) -- (-1.5,2.4);
	\draw [very thick,dotted] (1,-2.4) -- (1,2.4);
       	\draw [very thick,dotted] (3,-2.4) -- (3,2.4);
\end{tikzpicture}}}\
\right) ,
\]
and the skein relation
\[
P\left(
\raisebox{-10pt}{
\begin{tikzpicture}[scale=.3]
        \draw [very thick, ->] (-1.5,-1.5)  -- (1.5,1.5);
	\draw [very thick, ->] (-0.5,0.5) -- (-1.5,1.5);
	\draw [very thick] (1.5,-1.5)  -- (0.5,-0.5);
\end{tikzpicture}} \ 
\right)
-
P\left(
\raisebox{-10pt}{
\begin{tikzpicture}[scale=.3]
         \draw [very thick] (-1.5,-1.5)  -- (-0.5,-0.5);
	\draw [very thick, ->] (0.5,0.5) -- (1.5,1.5);
	\draw [very thick,->] (1.5,-1.5)  -- (-1.5,1.5);
\end{tikzpicture}} \ 
\right)
=
(q^{-1} - q)
P\left(
 \raisebox{-10pt}{
\begin{tikzpicture}[scale=.3]
	\draw [very thick,->] (-1.5,-1.5) -- (-1.5,1.5);
	\draw [very thick,->] (1.5,-1.5)  -- ( 1.5,1.5);
\end{tikzpicture}}\ 
\right) .
\]
Multiplying $P(b)$ by $\lambda^{w(\cl{b})}$, where $w(\cl{b})$
is the writhe of $\cl(b)$,
results in the usual, framing independent, HOMFLY-PT polynomial. 

\medskip

By the usual specializations of $\lambda$, we recover the $\glN$-polynomial  (for $\lambda=q^N$)  and
the Alexander polynomial (for $\lambda=1$) of the closure of $b$.
Note that for the latter one needs to cut open one of the strands to avoid getting the value
zero associated to the unknot, and therefore to any link, as explained in~\cite[\S4]{QS1}
(see the discussion on normalized invariants in~\cref{ssec:normalized} below for further details).

%%%%%%%%%%%%%%%%%%%%%%%%%%%%%%%%%%%%%%%%%%%%%%%%%%%%%%%%%%%%%
\subsubsection{Weyl modules}\label{ssec:weylmodules}

%%%%%%%%%%%%%%%%%%%%%%%%%%%%%
%%%%  Weyl modules

We introduce a partial order $\preceq$ on $\Lambda_{\ell,k}^\beta$ by declaring that $\nu \preceq \mu$ whenever
\begin{align*}
\nu_i - \mu_i \leq 0,& &\text{for all $i \leq 0$,}  \\
\nu_i - \mu_i \geq 0,& &\text{for all $i > 0$.} 
\end{align*}
Let $\Lambda_+ := \{\mu \in \Lambda_{\ell,k}^\beta | \mu_i - \mu_{i+1} \geq 0 \text{ for all } i\in I_{k,\ell} \setminus\{0\} \}$. 

\begin{defn}
For $\mu\in \Lambda_+$, we define the \emph{Weyl module}
\[ 
W(\mu) := \dot{S}_{q,\beta}(\ell,k)1_\mu / (\nu\npreceq\mu).  
\] 
Here $(\nu\npreceq\mu)$ is the left ideal generated by all elements of the form $1_\nu x1_\mu$, for some
$x\in\dot{S}_{q,\beta}(\ell,k)$ and $\nu \npreceq \mu$. 
\end{defn}

For $\ell=0$, we recover the well-known Weyl modules for the $q$-Schur algebra
 $\dot{S}_q(k)_{\mu_1+\dotsm + \mu_k}$. 
As in the case of $\ell=0$, it is also true that $U_q(\mathfrak{gl}_{k+\ell})$ acts on $W(\mu)$: 
for $F_i\in U_q(\mathfrak{gl}_{k+\ell})$ and $1_{\nu}x1_\mu \in W(\mu)$ we put
$F_i \cdot 1_{\nu}x1_\mu := 1_{\nu -\alpha_i}F_ix1_\mu$, and similarly for $E_i$.
Note that the Chevalley generators of $U_q(\mathfrak{gl}_{k+\ell})$ are indexed from
$\{-\ell+1, \dotsc , k-1\}$, which is the set $I_{\ell,k}$
introduced in the definition of $\dot{S}_{q,\beta}(\ell,k)$. 

\smallskip

Note that $E_i 1\mu = 0 \in W(\mu)$ for all $i \in I_{k,\ell}$. Thus, $1_\mu \in W(\mu)$ is a highest weight object, and $\End_{W(\mu)}(1_\mu) \cong \bZ(q,\lambda)$.
From~\cref{ssec:linkinvs}, all weights occurring in $P(b)\in\dot{S}_{q,\beta}(n,n)$ are of the form $\nu \preceq  ((\beta)^n,(0)^n)$ since the only weights appearing are $\beta,\beta-1$ and $0,1,2$.
Therefore, $P(b)$ is
sent to the same word in $E$'s and $F$'s
under the quotient map $\dot{S}_{q,\beta}(n,n)\to W( (\beta)^n,(0)^n )$.
In particular, the results in \cite[Theorem 3.8]{QS1} imply the following:

\begin{prop}\label{prop:PbonWeyl}
The element $\lambda^{w(\cl(b))}P(b)$ acts on $W( (\beta)^n,(0)^n )$ as an endomorphism of the highest weight 
object, which is multiplication by the HOMFLY-PT polynomial of the closure of~$b$.
\end{prop}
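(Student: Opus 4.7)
The plan is to deduce this proposition almost directly from \fullref{thm:QS} after verifying that $P(b)$ descends cleanly to the Weyl module quotient. Set $\mu = ((\beta)^n,(0)^n)$ for brevity, and let $v_\mu$ denote the image of $1_\mu$ under the quotient map $\dot{S}_{q,\beta}(n,n)1_\mu \twoheadrightarrow W(\mu)$. Since $\mu \in \Lambda_+$ and since $W(\mu)$ is by construction the cyclic $\dot{S}_{q,\beta}(n,n)$-module generated by $v_\mu$ modulo everything factoring through a strictly higher weight, $v_\mu$ is a highest weight vector for the induced $U_q(\mathfrak{gl}_{2n})$-action described in \S\ref{ssec:weylmodules} (every $E_i\cdot v_\mu$ either lands in a weight strictly above $\mu$, hence is killed by the defining ideal, or is identically zero already in $\dot S_{q,\beta}(n,n)1_\mu$).

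Next I would check that the ideal $[\nu \succ \mu]$ is stable under left multiplication by any element of $\dot{S}_{q,\beta}(n,n)$: if $1_\nu x 1_\mu$ is a generator of $[\nu \succ \mu]$ with $\nu \succ \mu$ and $a \in \dot{S}_{q,\beta}(n,n)$, then $a\cdot 1_\nu x 1_\mu$ still factors through $1_\nu$, so it lies in the ideal. Consequently left multiplication by $P(b)$ descends to an endomorphism of $W(\mu)$. From the diagrammatic description in \S\ref{ssec:linkinvs} together with the closing pattern \eqref{eq:closingbraid}, every intermediate weight visited by $P(b)$ is obtained from $\mu$ by subtracting positive roots, hence is $\preceq \mu$; this ensures that $P(b)\cdot 1_\mu$ is not forced to lie in $[\nu\succ \mu]$ for trivial reasons.

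Now apply \fullref{thm:QS}: the endomorphism $P(b) \in \End_{\dot{S}_{q,\beta}(n,n)}(1_\mu)$ is a scalar in $\bQ(q)[\lambda^{\pm 1}]$ equal to the framed HOMFLY-PT invariant of $\cl(b)$. Writing $P(b) = c(b) \cdot 1_\mu$ with $c(b) \in \bQ(q)[\lambda^{\pm 1}]$, we obtain
\[
P(b)\cdot v_\mu \;=\; [\,P(b)\cdot 1_\mu\,] \;=\; c(b)\,v_\mu,
\]
so the action of $P(b)$ on $v_\mu$ is multiplication by the framed HOMFLY-PT polynomial. Multiplying by $\lambda^{w(\cl(b))}$ converts this to the unframed HOMFLY-PT polynomial, as recalled after \fullref{thm:QS}. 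Finally I would argue that this scalar action on $v_\mu$ determines $P(b)$ as an endomorphism of the highest weight line in $W(\mu)$, completing the proposition.

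The only step that could cause trouble is the well-definedness of the descent, i.e., checking that neither $P(b)$ itself nor its action on $v_\mu$ is inadvertently killed in passing to the Weyl module; but this is immediate from the weight monotonicity of the diagrammatic generators appearing in \S\ref{ssec:linkinvs}. Everything else is a translation of \fullref{thm:QS} into the Weyl-module language.
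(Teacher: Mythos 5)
Your proposal is correct and follows essentially the same route as the paper: the paper's (implicit) proof is exactly the observation preceding the proposition that all weights occurring in $P(b)$ are $\preceq ((\beta)^n,(0)^n)$, so $P(b)$ survives the quotient to $W((\beta)^n,(0)^n)$ unchanged, after which \fullref{thm:QS} gives the scalar and $\lambda^{w(\cl(b))}$ removes the framing dependence. Your additional checks (stability of the ideal under left multiplication and non-vanishing of the highest weight vector) are just a more explicit spelling-out of that same argument.
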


%%%%%%%%%%%%%%%%%%%%%%%%%%%%%%%%%%%%%%%%%
\subsection{Parabolic Verma modules}\label{sec:paraverma}

Consider $\g = \gl_m$ with simple roots $I_\g = \{\alpha_0, \dots, \alpha_{m-1}\}$. Let $\Upsilon_\g = \{0, \dots, m\}$, and define
\[
\alpha_i(\gamma) := 
\begin{cases}
	1, &\text{if $\gamma=i$}, \\
	-1, &\text{if $\gamma=i+1$}, \\
	0, &\text{otherwise},
\end{cases}
\]
for all $\gamma \in \Upsilon_\g$ and $i \in I_\g$.

Recall that the quantum group $U_q(\g)$ the $\bQ(q)$-algebra generated by the \emph{Chevalley generators} $E_i,F_i$ for all $i \in I_\g$ and the \emph{Cartan elements} $K_\gamma^{\pm 1}$ for all $\gamma \in \Upsilon_\g$, with relations
\begin{align*}
K_\gamma K_\gamma^{-1}  &= 1 = K_\gamma^{-1}K_\gamma,
& K_{\gamma} K_{\gamma'} &=K_{\gamma'} K_{\gamma}, \\
K_{\gamma} E_i  &= q^{\alpha_i(\gamma)} E_i K_{\gamma},  &
 K_{\gamma} F_i &= q^{\alpha_i(\gamma)} F_i K_{\gamma},
\end{align*}
\begin{equation*}
E_iF_j - F_jE_i = \delta_{ij} \frac{\tilde K_i - \tilde K_i^{-1}}{q - q^{-1}},
\end{equation*}
where $\tilde K_i := K_iK_{i+1}^{-1}$,
\begin{align*}
    E_i^2 E_{i\pm 1} + E_{i\pm 1}E_i^2 &= [2]_qE_iE_{i\pm 1}E_i, 
    &
    E_iE_j &= E_jE_i & \text{if $|i-j|>1,$} 
     \\
    F_i^2 F_{i\pm 1} + F_{i\pm 1}F_i^2 &=  [2]_qF_iF_{i\pm 1}F_i,
    &
    F_iF_j &= F_jF_i & \text{if $|i-j|>1,$} 
\end{align*}
for all $i,j \in I_\g$ and $\gamma, \gamma' \in \Upsilon_\g$. 

 \smallskip

The (standard) \emph{Borel subalgebra $U_q(\bo)$} is the $U_q(\g)$-subalgebra generated by $\brak{E_i, K_i}_{i \in I_\g}$. 
A (standard) \emph{parabolic subalgebra $U_q(\p)$} is an $U_q(\g)$-subalgebra such that $U_q(\bo) \subset U_q(\p)$. 
For any subset of simple roots $I_\lv \subset I_\g$, we can define a parabolic subalgebra $U_q(\p) = \brak{E_i, F_j, K_\gamma}_{i \in I_\g, j \in I_\lv, \gamma \in \Upsilon_\g}$. As a matter of fact, any parabolic subalgebra is of this form for some choice of $I_\lv$. 
 The subalgebra $U_q(\lv) = \brak{E_j,F_j, K_\gamma}_{ j \in I_\lv, \gamma \in \Upsilon_\g}$ is called the \emph{Levi factor}, and the part $U_q(\mathfrak{n}) = \brak{E_i}_{i \in I_{\mathfrak n} := I_\g\setminus I_\lv}$ is the \emph{nilpotent radical}.
 
 \smallskip
 
 Fix a parabolic subalgebra given by $I_\lv$. We choose a weight $\mu = \{\mu_\gamma\}_{\gamma \in \Upsilon_\g}$ such that $\mu_i - \mu_{i+1} \in \bN_0$ for each $i \in I_\lv$, and $\mu_j - \mu_{j+1} \in \beta + \bZ$ for each $j \in I_\np$. 
 There is a unique irreducible, integrable $U_q(\lv)$-module $L(\mu)$ over $\bQ(q,\lambda = q^\beta)$ with highest weight $\mu$. This means $L(\mu)$ is generated by a highest weight vector $v_\mu$ such that
\begin{align*}
K_\gamma v_\mu &= q^{\mu_\gamma} v_\mu,
&
E_i v_\mu &= 0,
\end{align*}
for all $i \in I_\g$ and $\gamma \in \Upsilon_g$. We extend $V(\mu)$ to a $U_q(\p)$-module by setting $U_q(\np)L(\mu) := 0$. 

\begin{defn}
The \emph{parabolic Verma module} with highest $\mu$ is the induced module
\[
M^\p(\mu) := U_q(\g) \otimes_{U_q(\p)} L(\mu). 
\]
\end{defn}

When $\p$ coincides with the Borel subalgebra $\mathfrak{b}$, we recover the usual Verma module. When $\p = \g$, then we get the irreducible, integrable representation $L(\mu)$. 
See~\cite[Chapter 9]{humphreysO} for further details on parabolic Verma modules,
and~\cite{mazorchuk1} (and references therein) for a detailed study of parabolic Verma modules.

%%%%%%%%%%%%%%%%%%%%%%%%%%%%%%%%%%%%%%%%%%%%%%%%%%%%%%%%%%%%%
\subsection{Parabolic Verma modules and link invariants}\label{ssec:linkpverma}

We consider $\g = \gl_{\ell+k}$, and we identify $I_\g$ with $I_{\ell,k}$, so that the Chevalley generators are indexed by elements in $I_{\ell,k}$. 
Consider the parabolic subalgebra given by $I_\lv = I_{\ell,k} \setminus\{0\}$. 
Thus, its nilpotent radical is generated by $E_0$.

\begin{prop}\label{prop:isowpar}
For $\mu \in \Lambda_+$, the module $W(\mu)$ is isomorphic to the parabolic Verma module $M^\p(\mu)$
as modules over $U_q(\mathfrak{gl}_{\ell+k})$. 
\end{prop}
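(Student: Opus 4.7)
The plan is to construct a $U_q(\mathfrak{gl}_{k+\ell})$-module map $\phi \colon M^\p(\mu) \to W(\mu)$ sending the canonical generator $v_\mu$ of $M^\p(\mu)$ to the class of $1_\mu$, and to show it is an isomorphism. For $\phi$ to be well-defined, I need $1_\mu \in W(\mu)$ to satisfy the defining relations of the highest weight vector of the parabolic Verma module.

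First, $E_i \cdot 1_\mu = 1_{\mu + \alpha_i} E_i$ lies in the ideal $[\nu \succ \mu]$ since $\mu + \alpha_i \succ \mu$, so $1_\mu$ is killed by every $E_i$, including $E_0$ whose annihilation encodes that the highest weight vector of $M^\p(\mu)$ is killed by the nilpotent radical. The Cartan element $K_i$ acts on $1_\mu$ with the expected eigenvalue by construction of the doubled Schur algebra. The remaining condition is the finite-dimensionality of the Levi representation: $F_i^{\mu_i - \mu_{i+1} + 1} \cdot 1_\mu = 0$ for each $i \neq 0$. For such $i$ the operator $F_i$ and the idempotents $1_\mu$, $1_{\mu - \alpha_i}$, etc., all lie inside one of the embedded sub-algebras $\dot{S}_q(k)_d \hookrightarrow \dot{S}_{q,\beta}(\ell,k)$ or $\dot{S}_q(\ell)_d \hookrightarrow \dot{S}_{q,\beta}(\ell,k)$; inside this sub-algebra the classical fact that its Weyl modules coincide with the finite-dimensional irreducibles in characteristic $0$ supplies the desired Garnir-type relation, and it is inherited by $W(\mu)$ because $[\nu \succ \mu]$ in the doubled algebra restricts to the analogous ideal in each embedded Schur algebra. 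The universal property of $M^\p(\mu)$ then produces $\phi$.

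Surjectivity of $\phi$ is immediate because $W(\mu)$ is cyclic on $1_\mu$ as a module over the doubled Schur algebra, and the latter is generated by Chevalley-style operators. For injectivity I would compare formal characters. The PBW decomposition gives $M^\p(\mu) \cong U_q(\bar{\mathfrak n}) \otimes L(\Lambda_f)$ as a vector space, and hence character $\mathrm{ch}\, L(\Lambda_f) \cdot \prod_{\alpha \in \Delta^+(\bar{\mathfrak n})} (1-e^{-\alpha})^{-1}$, where $\bar{\mathfrak n}$ is generated by $F_0$ and its commutators with the Levi. An analogous triangular decomposition for $\dot{S}_{q,\beta}(\ell,k) \cdot 1_\mu$, together with the Levi truncations noted above, yields the same character for $W(\mu)$; combined with surjectivity this forces $\phi$ to be an isomorphism.

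The main obstacle is the Garnir-type vanishing $F_i^{\mu_i - \mu_{i+1} + 1} \cdot 1_\mu \in [\nu \succ \mu]$ for $i \neq 0$, which is not immediate from the idempotent-vanishing relations of $\dot{S}_{q,\beta}(\ell,k)$ alone as soon as $\mu_{i+1} > 0$. Making this precise, either by carefully invoking the classical theory of Weyl modules for the ordinary $q$-Schur algebra through the embeddings, or by a direct computation iterating the commutator $[E_i, F_i^n] = [n]_q F_i^{n-1}[H_i - n + 1]_q$ together with $E_i \cdot 1_\mu \equiv 0$ modulo the ideal, is the step doing the real work. Once it is secured, the character identification and the surjectivity of $\phi$ combine to finish the proof.
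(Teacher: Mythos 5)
Your overall strategy is the same as the paper's: both arguments come down to observing that $M^\p(\mu)$ and $W(\mu)$ are cyclic modules generated by a single vector of weight $\mu$ and then matching weights/characters via a PBW-type statement (the paper's proof is literally a two-line appeal to cyclicity plus ``the PBW theorem, which is also true for $\dot{S}_{q,\beta}(\ell,k)$''). Your version is more explicit in that you build the map $M^\p(\mu)\to W(\mu)$ from the universal property, and you correctly isolate the one point where real content is needed, namely that $F_i^{\mu_i-\mu_{i+1}+1}1_\mu$ lies in $[\nu\succ\mu]$ for $i\neq 0$ (the paper glosses over this entirely). Your character-equality step is asserted rather than proved, but that is no worse than the paper's own level of detail.

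However, the justification you offer for the Garnir-type vanishing does not work as stated. The embeddings $\dot{S}_q(k)_d\hookrightarrow\dot{S}_{q,\beta}(\ell,k)$ and $\dot{S}_q(\ell)_d\hookrightarrow\dot{S}_{q,\beta}(\ell,k)$ defined in the paper send $1_{\mu_1,\dotsc,\mu_k}$ to $1_{0,\dotsc,0,\mu_1,\dotsc,\mu_k}$ (and similarly with trailing zeros), so their images only contain idempotents whose complementary block of entries is zero; for a general $\mu\in\Lambda_+$ (e.g.\ $((\beta)^n,(0)^n)$ or anything with both blocks nontrivial) the elements $1_\mu$, $1_{\mu-\alpha_i}$ and $F_i1_\mu$ are \emph{not} in the image of either embedding, so you cannot literally import the classical Weyl-module relation this way. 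What you need instead is a ``frozen-entries'' homomorphism from an ordinary idempotented Schur algebra for one block (fixing the other block's entries at those of $\mu$) into $\dot{S}_{q,\beta}(\ell,k)$ — this exists because relations (1)--(4) restrict correctly, and then the classical fact that the standard module $\dot{S}_q(k)1_{\mu'}/[\nu'\succ\mu']$ is the Weyl module pushes the Garnir element into $[\nu\succ\mu]$ — or else the commutator computation you mention, carried out in full. Since you explicitly leave this step as ``the step doing the real work'' and the route you sketch for it is flawed, this is the gap to close. A small remark that would also simplify your ending: the assignment $x1_\mu\mapsto xv_\mu$ always kills $[\nu\succ\mu]$ because $M^\p(\mu)$ has no weights $\succ\mu$, so there is a free surjection $W(\mu)\twoheadrightarrow M^\p(\mu)$; combined with the surjectivity of your $\phi$ and local finite-dimensionality of weight spaces this yields the isomorphism without having to compute the character of $W(\mu)$ independently.
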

\begin{proof}
As mentioned above, $U_q(\mathfrak{gl}_{k+\ell})$ acts on $W(\mu)$ which in particular is a weight module. 
Moreover, $1_\mu$ is a highest weight vector and since $W(\mu)$ is cyclic generated by $1_\mu$, $W(\mu)$ is a highest weight module. 
We see that $W(\mu)$ is a Verma module and there is a surjection $M^\bo(\mu) \twoheadrightarrow W(\mu)$. 
By \cite[Theorem 1.2]{humphreysO} there are finitely many highest weight modules for a fixed highest weight, up to isomorphism, and they are given by all the parabolic Verma modules (including the cases $\p = \g$ and $\p = \bo$). Thus, it is enough to study the nilpotency of the operator $F_i$ for all simple root $\alpha_i$. 
By the PBW basis theorem \cite[Proposition 4.16]{jantzen96} of $U_q(\mathfrak{gl}_{k+\ell})$, we know that $\F_0^k 1_\mu \neq 0$ for all $k \geq 0$. 
One can also  see that $\F_{\pm i}$ for $i \neq 0$ acts locally nilpotently on $W(\mu)$.  
Indeed for $i > 0$, given $\nu \preceq \mu$, we have $\nu_i = \mu_i + k$ for some $k \geq 0$, and thus $\F_i^{k+1} 1_\nu = 0$. 
Similarly, for $i < 0$ one has $\nu_{i+1} = \mu_{i+1} - k$ for some $k \geq 0$, and thus $\F_i^{k+1} 1_\nu = 0$. 
Therefore, we conclude that $W(\mu)$ is isomorphic to the parabolic Verma module $M^\p(\mu)$.
\end{proof}

\n{\bf Notation.} From now on, for the sake of keeping the notation simple we denote the highest
weight modules $W( (\beta)^n,(0)^n )$ and $M^{\p}( (\beta)^n,(0)^n )$ 
by $W( \beta )$ and $M^{\p}( \beta )$ respectively. 

\smallskip
In the particular case of $M^{\p}( \beta )$, the irreducible $L( (\beta)^n,(0)^n )$ is $1$-dimensional. 
Under the isomorphism in~\cref{prop:isowpar}, the element $P(b)$ defines an endomorphism 
$P^\p(b)$ of the highest weight object of the Verma module $M^{\p}( \beta ) $ (seen as a linear category with objects indexed by the weights, in the obvious way). 
Since $P^\p(b)$ consists of the same word in $E$'s and $F$'s
as $P(b)$, it yields the same element in $\bQ(q, \lambda)$. 
Thus, \cref{prop:PbonWeyl} translates to the following theorem:

\begin{thm}\label{thm:homflyptMp}
  For a braid $b \in B_n$, the element
  $\lambda^{w(\cl(b))}P^\p(b)\in \End_{U_q(\glnn)}\bigl( M^{\p} (\beta) \bigr)$
 is a link invariant which equals the HOMFLY-PT polynomial
  of the closure of $b$. 
\end{thm}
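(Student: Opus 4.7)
The plan is to transport the result from the Weyl module side to the parabolic Verma side using \fullref{prop:isowpar}, with essentially no new computation required.

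First I would observe that by the construction in~\S\ref{ssec:linkpverma}, the element $P^\p(b)$ is by definition the image of $P(b) \in \dot S_{q,\beta}(n,n)$ under the identification of $W(\beta)$ with $M^\p(\beta)$ provided by~\fullref{prop:isowpar}. Because this isomorphism is an isomorphism of $U_q(\glnn)$-modules and $P(b)$ is expressed as a polynomial in the Chevalley generators $E_i$ and $F_i$ acting on the highest weight vector, the two endomorphisms are literally the same word in these generators; in particular $P^\p(b)$ is well-defined and lies in $\End_{U_q(\glnn)}\bigl(M^\p(\beta)\bigr)$ as an endomorphism of the highest weight object.

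Next, \fullref{prop:PbonWeyl} tells us that $\lambda^{w(\cl(b))}P(b)$ acts on $W(\beta)$ as multiplication by the HOMFLY-PT polynomial of $\cl(b)$ on the highest weight vector. Transporting this statement along the isomorphism of~\fullref{prop:isowpar} immediately gives that $\lambda^{w(\cl(b))}P^\p(b)$ acts as multiplication by the same scalar on the highest weight object of $M^\p(\beta)$. This scalar is, by \fullref{thm:QS}, the HOMFLY-PT polynomial of $\cl(b)$, which in particular is invariant under Reidemeister moves on~$b$ (viewed as a braid up to Markov equivalence).

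The only subtle point, and the one I would treat carefully, is that the quotient map $\dot S_{q,\beta}(n,n) \twoheadrightarrow W(\beta) \cong M^\p(\beta)$ actually sends $P(b)$ to a well-defined element and not to zero. This was already addressed in~\S\ref{ssec:weylmodules}: all weights appearing in the diagrammatic expression of $P(b)$ are $\preceq ((\beta)^n,(0)^n)$, so $P(b)$ is not killed by the ideal $[\nu\succ ((\beta)^n,(0)^n)]$ defining $W(\beta)$. Once this is noted, link invariance of $\lambda^{w(\cl(b))}P^\p(b)$ follows from link invariance of $\lambda^{w(\cl(b))}P(b)$, and equality with the HOMFLY-PT polynomial is just~\fullref{prop:PbonWeyl} read through~\fullref{prop:isowpar}. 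I do not expect any genuine obstacle; the theorem is essentially a reformulation of the Queffelec--Sartori result in parabolic Verma language, and the work was already done in establishing~\fullref{prop:isowpar} and~\fullref{prop:PbonWeyl}.
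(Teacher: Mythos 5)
Your proposal is correct and follows essentially the same route as the paper, which deduces the theorem directly from \fullref{prop:isowpar}, \fullref{prop:PbonWeyl} and \fullref{thm:QS} by noting that $P^\p(b)$ is the same word in the $E$'s and $F$'s as $P(b)$ and hence yields the same scalar in $\bQ(q)[\lambda^{\pm 1}]$. Your extra remark that the weights occurring in $P(b)$ are all $\preceq ((\beta)^n,(0)^n)$, so the word survives the quotient defining $W(\beta)$, is exactly the point the paper makes in~\S\ref{ssec:weylmodules}.
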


Taking the whole algebra $U_q(\g)$ as parabolic subalgebra and a highest weight $((N)^n,(0)^n)$ instead, the element $\lambda^{w(\cl(b))}P^\g(b)$ gives an endormorphism of the highest weight object of $L(N)$, which coincides with multiplication by the $\gl_N$-polynomial. This is a well-know result that can be explained through quantum skew-Howe duality \cite{qSkewHowe}.

%%%%%%%%%%%%%%%%%%%%%%%%%%%%%%%%%%%%%%%%%%
\subsubsection{Normalized link invariants}\label{ssec:normalized}

In order to be able to compute the normalized HOMFLY-PT and $\glN$-link 
invariants, we follow the procedure described in~\cite[\S4]{QS1}.
We denote $\cl_o(b)$ the diagram obtained by closing all but the outermost strands of $b$.
We can also think of it as cut opening the braid closure 
diagram $\cl(b)$ into a special type of $(1,1)$-tangle diagram. 
More precisely, we open the diagram of  $\cl(b)$ by cutting the outermost strand, following a pattern as 
shown in the example in~\cref{eq:openclosedbraid} below for a braid with three strands, 
 \begin{equation}\label{eq:openclosedbraid}
   \raisebox{-28pt}{
     \reflectbox{
  \begin{tikzpicture}
\draw [very thick,->] (-0.5,-.9) -- (-0.5,.5);
\draw [very thick,directed=.6] (.25,-.9) -- (-0.5,-.9);
\draw [very thick,dotted] (-.5,-1.9)  -- (-.5,-.9);
\draw [very thick] (.25,-1.4)  -- (.25,-.9);
\draw [very thick,dotted] (1,-1.4)  -- (1,-.9);
\draw [very thick,dotted] (0.25,-1.9)  -- (0.25,-1.4);
\draw [very thick,directed=.6] (1,-1.4) -- (.25,-1.4);
\draw [very thick,directed=.6] (1,-1.9)  -- (1,-1.4);
\draw [very thick,dotted] (1.75,-1.9)  -- (1.75,-0.9);
\draw [very thick,->] (0.25,-.4) -- (0.25,.5);
  \draw [very thick,dotted] (0.25,-.85)  -- (0.25,-.4);
  \draw [very thick,dotted] (1,-.35)  -- (1,.1);
  \draw [very thick] (1,-.9)  -- (1,-.4);
  \draw [very thick,directed=.6] (1,-.4) -- (0.25,-.4);
  \draw [very thick,->] (1,.1)  -- (1,.5);
\draw [very thick,directed=.6] (1.75,.1) -- (1,.1);
\draw [very thick] (1.75,.1)  -- (1.75,.5);
\draw [very thick] (1.75,-.9)  -- (1.75,-.4);
\draw [very thick,dotted] (1.75,-.35)  -- (1.75,.1);
\draw [very thick] (2.5,-.4)  -- (2.5,.5);
\draw [very thick,dotted] (2.5,-1.9)  -- (2.5,-.4);
\draw [very thick,directed=.6] (2.5,-.4) -- (1.75,-.4);
\draw [very thick,directed=.6] (1.75,-.89) -- (1,-.89);
\end{tikzpicture}}}
  \end{equation}
and similarly for the top part.  

\smallskip

The procedure described in~\cref{ssec:linkinvs}
gives an element $\overline{P}(b)\in S_{q,\beta}(n-1,n)$ which is an endomorphism 
of $1_{( (\beta)^{n-1},1,(0)^{n-1} )}$, which in turn implies that $\overline{P}(b)\in \bZ(q,\lambda)$  
(see~\cite[\S4]{QS1} for details).

\begin{thm}[{\cite[Proposition 4.6]{QS1}}]\label{thm:QS-homflypt}
  For a braid $b \in B_n$, the element $\overline{P}(b)$ is a framed link invariant
  which equals the reduced HOMFLY-PT polynomial of the closure of $b$. 
\end{thm}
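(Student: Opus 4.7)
The plan is to reduce the statement to \fullref{thm:QS} by comparing the opened and closed diagrams locally. First I would verify that $\overline{P}(b)$ is genuinely a scalar: as an endomorphism of the one-dimensional weight space $1_{((\beta)^{n-1},1,(0)^{n-1})}$ in $\dot{S}_{q,\beta}(n-1,n)$, it must be a multiple of the identity by the PBW structure of the doubled Schur algebra, and hence lies in $\bQ(q)[\lambda^{\pm 1}]$. Framed invariance splits into two parts: invariance under Reidemeister II and III is inherited directly from \fullref{thm:QS}, since those moves are local to the braid region, which is untouched by the opening; and invariance under Markov conjugation $b \mapsto \sigma b \sigma^{-1}$ follows from planar isotopy of the cups and caps in \eqref{eq:openclosedbraid}, which lets the cut point be slid freely around the cut-open closure.

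The substantive step is to express $\overline{P}(b)$ in terms of $P(b)$. Comparing \eqref{eq:closingbraid} with \eqref{eq:openclosedbraid}, the opened diagram $\cl_o(b)$ differs from $\cl(b)$ only by the removal of the outermost cup at the bottom and cap at the top of the closure pattern. Re-attaching these reconstructs the full closure and turns $\overline{P}(b)$ into $P(b)$. By the ladder relations of $\dot{S}_{q,\beta}$, adding this removed cup-cap loop on the appropriate idempotent amounts to multiplication by the quantum number $[\beta]_q = \frac{\lambda-\lambda^{-1}}{q-q^{-1}}$, which is precisely the value of the unknot computed in \fullref{thm:QS}. This yields the identity
\[
P(b) \;=\; \frac{\lambda - \lambda^{-1}}{q - q^{-1}}\,\overline{P}(b),
\]
so $\overline{P}(b)$ is $P(b)$ divided by the unknot value, which after the writhe correction $\lambda^{w(\cl(b))}$ is precisely the framed reduced HOMFLY-PT polynomial of $\cl(b)$.

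The main technical obstacle will be carrying out the local cup-cap evaluation cleanly in the shifted setting. One must show that gluing back the removed cup and cap onto the outermost strand at the boundary weight produces exactly the factor $[\beta]_q$ and nothing more; this relies on the commutator relation $(E_0 F_0 - F_0 E_0)\,1_\mu = [\mu_0 - \mu_1]_q\, 1_\mu$ from \fullref{def:doubleschur} at the interface between the shifted block of weights $\beta-\bN_0$ and the unshifted block $\bN_0$, together with a verification that the remaining cup-cap evaluations present in $\cl_o(b)$ already act as the identity on the one-dimensional target weight space, so they do not contribute extra scalars. Once that local computation is pinned down, the comparison with \fullref{thm:QS} delivers the result.
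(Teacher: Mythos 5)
The paper does not actually prove this statement --- it is quoted from Queffelec--Sartori (Proposition 4.6 of \cite{QS1}) --- so your sketch has to stand on its own, and as written it has a genuine gap at its central step. Your strategy (prove $P(b)=\tfrac{\lambda-\lambda^{-1}}{q-q^{-1}}\,\overline{P}(b)$ for every braid word and then quote \fullref{thm:QS}) is legitimate in principle, and the identity is true; but the claim that re-attaching the removed cup and cap is a \emph{local} evaluation producing $[\beta]_q$ via a single application of $(E_0F_0-F_0E_0)1_\mu=[\mu_0-\mu_1]_q1_\mu$ does not hold up. The arc that is removed in passing from \eqref{eq:closingbraid} to \eqref{eq:openclosedbraid} is the \emph{outermost} closure arc: in the ladder presentation its cup and cap are not single ladders but staircases of $F_i$'s and $E_i$'s passing through every intermediate position, with intermediate weights that temporarily alter the spectator entries. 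To make your argument work you would need (a) a web-calculus identity showing that $\cl(b)$ equals (cap staircase)$\circ\iota(\cl_o(b))\circ$(cup staircase), where $\iota$ inserts the extra downward spectator strand, which requires checking that this insertion is compatible with the relations (an algebra map on the relevant idempotent truncation) and that the outer arc can be pulled clear of the rest using the defining relations; (b) the telescoping evaluation of the resulting staircase loop in the presence of the dotted spectator strands, which uses the commutator relation at \emph{every} step, not just at the $0$--$1$ interface; and (c) the fact that $1_{((\beta)^{n-1},1,(0)^{n-1})}\dot{S}_{q,\beta}(n-1,n)1_{((\beta)^{n-1},1,(0)^{n-1})}$ is spanned by the idempotent, which is not a formal consequence of ``PBW'' since this weight is not the obvious maximal one --- this is exactly the point the paper defers to \cite[\S4]{QS1}. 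Steps (a)--(c) are essentially the content of the cited proposition, so your proposal currently asserts, rather than proves, the heart of the statement.

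Two smaller points. First, your treatment of framed invariance is off: invariance of $\overline{P}$ under Markov conjugation is \emph{not} mere planar isotopy of the cups and caps, because the cut point obstructs sliding a crossing all the way around the cut-open closure (this is why \cite{QS1} must prove independence of the choice of cut strand). However, if you do establish the division identity $P(b)=[\beta]_q\overline{P}(b)$ for all braid diagrams, then invariance of $\overline{P}$ follows at once from invariance of $P$ since $\bQ(q)[\lambda^{\pm1}]$ is a domain and $[\beta]_q\neq 0$, so that whole paragraph can be dropped. Second, the statement that ``the remaining cup-cap evaluations in $\cl_o(b)$ already act as the identity'' is misstated: those cups and caps are part of the diagram whose total evaluation \emph{defines} $\overline{P}(b)$; there is nothing separate to verify there once (c) is in place.
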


Note that we could have opened the diagram in a different way, by choosing a different strand to cut it open.    
We could have equally opened the diagram by cutting it using one of the inner strands at the expense of
adding crossings to the original diagram.  
In~\cite{QS1}, it is proven that the link invariant obtained does not depend on this choice.   

\smallskip

In order to parallel the construction of~\cref{ssec:linkpverma} using a parabolic Verma module, we consider
$\mathfrak{gl}_{2n-1}$ with simple roots
$\{ \alpha_{2-n}, \dots, \alpha_{n-1} \}$ (we no longer need the root
$\alpha_{1-n}$ since the braid is not completely closed on the left anymore). 
We form the parabolic subalgebra $U_q(\rp)$ given by $I_\lv = I_\g \setminus\{0\}$.
Then, we consider the parabolic Verma module 
\[
M^{\rp}( \overline\beta ) = M^{\rp}( (\beta)^{n-1},1,(0)^{n-1} ),
\]
where the highest weight is chosen to agree with the bottom of~\cref{eq:openclosedbraid}:
   \[
  \begin{tikzpicture}
\draw [very thick,dotted] (-.5,-1.9) node[below] {$\beta$} -- (-.5,-1.4);
\draw [very thick,dotted] (0.25,-1.9)node[below] {$\beta$}  -- (0.25,-1.4);
\draw [very thick,directed=.6] (1,-1.9)node[below] {$1$}  -- (1,-1.4);
\draw [very thick,dotted] (1.75,-1.9)node[below] {$0$}  -- (1.75,-1.4);
\draw [very thick,dotted] (2.5,-1.9)node[below] {$0$}  -- (2.5,-1.4);
\end{tikzpicture}
\]

 %Note that the irreducible module $L((\beta)^{n-1},1,(0)^{n-1})$ of $U_q(\lv)$ is two-dimensional with $F_1$ acting non-trivially on the highest weight vector. Moreover, $\tilde K_0$ acts on it as $q^{-1}\lambda$ (instead of $\lambda$ in $M^\p(\beta)$).

The method described in~\cref{ssec:linkpverma} defines an endomorphism 
$\overline{P}^{\rp}(b)$ of the highest weight object of $M^{\rp}( \overline\beta)$.
The following is an immediate consequence of the paragraphs above: 
\begin{thm}\label{prop:isopwpar2}
  For a braid $b$ the element
  $\lambda^{w(\cl_o(b))}\overline{P}^{\rp}(b)\in \End_{U_q(\mathfrak{gl}_{2n-1})}\bigl(M^{\rp}( \overline\beta ) \bigr)$
 is a link invariant which equals the reduced HOMFLY-PT polynomial of the closure of $b$. 
\end{thm}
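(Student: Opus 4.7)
The plan is to mirror the argument that established Theorem~\ref{thm:homflyptMp} in the unreduced case, replacing the doubled Schur algebra $\dot{S}_{q,\beta}(n,n)$ with $\dot{S}_{q,\beta}(n-1,n)$, the parabolic $\p$ with $\rp$, and the highest weight $((\beta)^n,(0)^n)$ with $\overline\beta=((\beta)^{n-1},1,(0)^{n-1})$.

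\medskip

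First I would prove the analogue of Proposition~\ref{prop:isowpar} in this setting, namely an isomorphism of $U_q(\mathfrak{gl}_{2n-1})$-modules
\[
W(\overline\beta)\;\cong\; M^{\rp}(\overline\beta),
\]
where on the left we take the Weyl module for $\dot{S}_{q,\beta}(n-1,n)$ with highest weight $\overline\beta\in\Lambda_+$. The only delicate point compared to Proposition~\ref{prop:isowpar} is that the Levi factor of $\rp$ is $(\mathfrak{gl}_{n-1}\times\mathfrak{gl}_n)\oplus\h_0$ and the irreducible $L(\Lambda_f)$ is now two-dimensional because $F_1$ acts non-trivially on the highest weight vector. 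This matches exactly the fact that in $W(\overline\beta)$ the action of $F_1$ on $1_{\overline\beta}$ is non-zero and satisfies $F_1^{\,2}1_{\overline\beta}=0$ modulo the ideal of strictly larger weights. As in the proof sketched for Proposition~\ref{prop:isowpar}, one compares weights occurring on both sides using the PBW theorem (which holds for $\dot{S}_{q,\beta}(n-1,n)$), and the map sending the highest weight vector to the highest weight vector extends to an isomorphism of $U_q(\mathfrak{gl}_{2n-1})$-modules.

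\medskip

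Second, following~\S\ref{ssec:linkinvs} and the cut-open closure pattern in~\eqref{eq:openclosedbraid}, the construction of~\cite[\S4]{QS1} produces $\overline{P}(b)\in\dot{S}_{q,\beta}(n-1,n)$ as an endomorphism of $1_{\overline\beta}$. All weights appearing in the word in $E_i$'s and $F_i$'s defining $\overline{P}(b)$ are $\preceq\overline\beta$, so $\overline{P}(b)$ descends through the quotient $\dot{S}_{q,\beta}(n-1,n)1_{\overline\beta}\twoheadrightarrow W(\overline\beta)$ to an endomorphism of the highest weight vector. Under the isomorphism of the previous paragraph this is precisely $\overline{P}^{\rp}(b)$ acting on the highest weight object of $M^{\rp}(\overline\beta)$, and since such an endomorphism is automatically a scalar in $\bQ(q)[\lambda^{\pm 1}]$, both endomorphisms define the same element of the ground ring.

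\medskip

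Third, by Theorem~\ref{thm:QS-homflypt} the element $\overline{P}(b)$ is a framed link invariant equal to the framed reduced HOMFLY-PT polynomial of $\cl(b)$; multiplying by $\lambda^{w(\cl_o(b))}$ eliminates the framing dependence in exactly the same way as in the unreduced case described after Theorem~\ref{thm:QS}. Invariance under the choice of which strand is cut open to form $\cl_o(b)$ is already established in~\cite[\S4]{QS1} and transfers to the Verma module side through the isomorphism of step~one. Combining these ingredients yields the equality of $\lambda^{w(\cl_o(b))}\overline{P}^{\rp}(b)$ with the (unframed) reduced HOMFLY-PT polynomial of $\cl(b)$. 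The only step requiring any genuine verification is the isomorphism $W(\overline\beta)\cong M^{\rp}(\overline\beta)$; once that is in place the rest is transport of structure from Theorem~\ref{thm:QS-homflypt}.
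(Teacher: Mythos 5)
Your proposal is correct and follows essentially the same route as the paper, which states this theorem as an immediate consequence of the preceding discussion: \fullref{thm:QS-homflypt} for $\overline{P}(b)$, the transport through the Weyl-module/parabolic-Verma identification (your step one is just \fullref{prop:isowpar} instantiated at $(\ell,k)=(n-1,n)$, proved by the same highest-weight-plus-PBW argument), and the writhe factor $\lambda^{w(\cl_o(b))}$ to remove the framing dependence. The only cosmetic caveat is your aside that $L(\Lambda_f)$ is two-dimensional (echoing the paper's own remark); nothing in the argument actually uses this dimension count, only that both modules are generated by a single vector of weight $\overline\beta$ with the same weights occurring.
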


\begin{rem}
From now on, for the means of higher representation theory, we will consider the parabolic Verma modules $M^\p$ and $M^{\rp}( \overline\beta )$ over the ground field $\bQ\pp{q, \lambda} \supset \bQ(q,\lambda)$ with polynomial fractions viewed as formal power series. See \cite[\S5.3]{naissevaz1} for more about rings of formal Laurent series in the context of categorification, see also \cite{laurent} for a general discussion about these rings.
\end{rem}

%%%%%%%%%%%%%%%%%%%% End of File  %%%%%%%%%%%%%%%%%%%%
%
%  

%%% Local Variables:
%%% mode: latex
%%% TeX-master: "../homflypt-Verma"
%%% End:

%%%%%%%%%%%%%%%%%%%%%%%%%%%%%%%%%%%%%%
%%%                               									%%%
%%%  2-Verma modules									%%%
%%%                               									%%%
%%%%%%%%%%%%%%%%%%%%%%%%%%%%%%%%%%%%%%

\section{Parabolic 2-Verma modules}\label{sec:twopVerma}

We recall the construction of parabolic 2-Verma modules (i.e. categorified Verma modules) from \cite{naissevaz3}, using dg-enhanced KLR algebras.

We fix a unital commutative ring $\Bbbk$. Also, in our convention, a $\bZ^n$-graded dg-$\Bbbk$-algebra $(A,d)$, where $A = \bigoplus_{(h,\bg) \in \bZ \times \bZ^n} A_\bg^h$, is a dg-algebra carrying an extra $\bZ^n$-grading and having a differential of degree $-1$ w.r.t. the homological grading and that preserves the $\bZ^n$-grading: $d(A_\bg^h) \subset A_{\bg}^{h-1}$. 

%%%%%%%%%%%%%%%%%%%%%%%%%%%%%%%%%%%%%%%%%
\subsection{Dg-enhanced KLR algebras}

Fix a parabolic subalgebra $\p$ of $\gl_{n}$, obtained from a subset of simple roots $I_\lv \subset I_\g$. 

\begin{defn}
The $\p$-KLR algebra $R_\p(m)$ on $m$ strands is the diagrammatic $\Bbbk$-algebra where elements are $\Bbbk$-linear combinations of braid-like diagrams on $m$-strands, read from bottom to top, such that:
\begin{itemize}
\item strands are labeled by a simple root in $I_\g$;
\item two strands can only intersect transversely;
\item strands can be decorated by dots;
\item multiplication is given by gluing diagrams on top of each other, where $ab$ means we put $a$ on top of $b$, if the labels of the strands agree, and is zero otherwise;
\item the region immediately at the right of the left-most strand can be decorated with a floating dot (written as a hollow dot), if its label is in $I_\np$:
\[
\tikzdiagh[xscale=.75]{0}{
	\fdot{0.5,0.5};
	\draw (0,0) node[below] {\plusspacing \small $i_1$} -- (0,1);
	\draw (1,0) node[below] {\plusspacing \small $i_2$} -- (1,1);
	\node at(2,.5) {$\dots$};
	\draw (3,0) node[below] {\plusspacing \small $i_{m}$} -- (3,1);
}
\]
for $i_1 \in I_\np$;
\item diagrams are taken modulo braid-like planar isotopy and the following local relations:
  \begin{align}\label{eq:KLRR2}
\tikzdiagh{0}{
	      	\draw  (0,-.75) node[below] {\small $i$} .. controls (0,-.375) and (1,-.375) .. (1,0) .. controls (1,.375) and (0, .375) .. (0,.75);
 	 	\draw[myblue]  (1,-.75) node[below] {\small $j$} .. controls (1,-.375) and (0,-.375) .. (0,0) .. controls (0,.375) and (1, .375) .. (1,.75);
}
\ = \ 
\begin{cases}
	\hfil 0 &\text{ if } i = j, \\ \\
	\mspace{42mu}\tikzdiagh{0}{
      		\draw  (0,-.5) node[below] {\small $i$} -- (0,.5);
		\draw[myblue]  (1,-.5) node[below] {\small $j$} -- (1,.5);
	} &\text{ if } |i  - j| > 1,\\
           \tikzdiagh{0}{
      		\draw  (0,-.5) node[below] {\small $i$} -- (0,.5)node [midway,tikzdot]{};
		\draw[myblue]  (1,-.5) node[below] {\small $j$} -- (1,.5);
	}
	\ + \ 
	\tikzdiagh{0}{
      		\draw  (0,-.5) node[below] {\small $i$} -- (0,.5) ;
		\draw[myblue]  (1,-.5) node[below] {\small $j$} -- (1,.5)node [midway,tikzdot]{} ;
	}  &\text{ if } |i-j| = 1,
\end{cases} 
\end{align}
for all $i,j \in I_\g$,
\begin{align}\label{eq:KLRdotslide}
	\tikzdiagh{0}{
	          \draw (0,-.5) node[below] {\small $i$} .. controls (0,0) and (1,0) .. (1,.5);
	          \draw[myblue] (1,-.5) node[below] {\small $j$} .. controls (1,0) and (0,0) .. (0,.5)  node [near end,tikzdot]{};
	} 
	&\  = \ 
	\tikzdiagh{0}{
	          \draw (0,-.5) node[below] {\small $i$} .. controls (0,0) and (1,0) ..  (1,.5);
	          \draw[myblue] (1,-.5) node[below] {\small $j$}  .. controls (1,0) and (0,0) ..  (0,.5) node [near start,tikzdot]{};
	} 
&
	\tikzdiagh{0}{
	          \draw (0,-.5) node[below] {\small $i$} .. controls (0,0) and (1,0) ..  (1,.5) node [near start,tikzdot]{};
	          \draw[myblue] (1,-.5) node[below] {\small $j$} .. controls (1,0) and (0,0) ..  (0,.5);
	} 
	&\  = \ 
	\tikzdiagh{0}{
	          \draw (0,-.5) node[below] {\small $i$} .. controls (0,0) and (1,0) ..  (1,.5)node [near end,tikzdot]{};
	          \draw[myblue] (1,-.5) node[below] {\small $j$} .. controls (1,0) and (0,0) ..  (0,.5);
	} 
\\
\label{eq:KLRnh}
	\tikzdiagh{0}{
	          \draw (0,-.5) node[below] {\small $i$} .. controls (0,0) and (1,0) ..  (1,.5);
	          \draw (1,-.5) node[below] {\small $i$} .. controls (1,0) and (0,0) ..  (0,.5)  node [near end,tikzdot]{};
	}  
	&\  = \ 
	\tikzdiagh{0}{
	          \draw (0,-.5) node[below] {\small $i$} .. controls (0,0) and (1,0) ..  (1,.5); 
	          \draw (1,-.5) node[below] {\small $i$} .. controls (1,0) and (0,0) ..  (0,.5) node [near start,tikzdot]{};
	} \  + \  
	\tikzdiagh{0}{
	          \draw (0,-.5) node[below] {\small $i$} -- (0,.5);
	          \draw (1,-.5) node[below] {\small $i$} -- (1,.5);
	} ,
&
	\tikzdiagh{0}{
	          \draw (0,-.5) node[below] {\small $i$} .. controls (0,0) and (1,0) ..  (1,.5) node [near start,tikzdot]{};
	          \draw (1,-.5) node[below] {\small $i$}  .. controls (1,0) and (0,0) ..  (0,.5);
	} 
	&\  = \ 
	\tikzdiagh{0}{
	          \draw (0,-.5) node[below] {\small $i$} .. controls (0,0) and (1,0) ..  (1,.5)node [near end,tikzdot]{};
	          \draw (1,-.5) node[below] {\small $i$}  .. controls (1,0) and (0,0) ..  (0,.5);
	} \  + \ 
	\tikzdiagh{0}{
	          \draw (0,-.5) node[below] {\small $i$} -- (0,.5);
	          \draw (1,-.5) node[below] {\small $i$} -- (1,.5);
	} 
\end{align}
for all $i \neq j \in I_\g$, 
\begin{align}
 \label{eq:KLRR3} 
	\tikzdiagh[scale=.75]{0}{
		\draw  (0,0)node[below] {\small $i$} .. controls (0,0.5) and (2, 1) ..  (2,2);
		\draw[mygreen]  (2,0)node[below] {\small $k$} .. controls (2,1) and (0, 1.5) ..  (0,2);
		\draw[myblue]  (1,0)node[below] {\small $j$} .. controls (1,0.5) and (0, 0.5) ..  (0,1) .. controls (0,1.5) and (1, 1.5) ..  (1,2);
	 }  \  - \  
	\tikzdiagh[scale=.75]{0}{
		\draw  (0,0)node[below] {\small $i$} .. controls (0,1) and (2, 1.5) ..  (2,2);
		\draw[mygreen]  (2,0)node[below] {\small $k$} .. controls (2,.5) and (0, 1) ..  (0,2);
		\draw[myblue]  (1,0)node[below] {\small $j$} .. controls (1,0.5) and (2, 0.5) ..  (2,1) .. controls (2,1.5) and (1, 1.5) ..  (1,2);
	 }
\  &= 
\begin{cases}
\hfil 0 &\text{ if } i \neq k \text{ or } |i-j| > 1, \\ \\
 \ 
\tikzdiagh[scale=.75]{0}{
	     	 \draw[myblue]  (1,-1) node[below] {\small $j$}  --(1,1); 
	      	\draw  (0,-1) node[below] {\small $i$} -- (0,1); 
	      	\draw  (2,-1) node[below] {\small $i$} -- (2,1); 	
	 }
\quad& \text{otherwise,} 
\end{cases}
\end{align}
for all $i, j, k \in I_\g$,
\begin{align}
\tikzdiagh[yscale=1.5]{0}{
	\draw[myblue] (0,0) node[below]{\small $j$} ..controls (0,.15) and (1,.15) .. (1,.5)
		 ..controls (1,.85) and (0,.85) .. (0,1) -- (0,1.5);
	\draw (1,0) node[below]{\small $i$} ..controls (1,.15) and (0,.15) .. (0,.5)
		..controls (0,.85) and (1,.85) .. (1,1) -- (1,1.5);
	\fdot{.5,.5};
	\fdot{.5,1.25};
}
\ + \ 
\tikzdiagh[yscale=1.5]{0}{
	\draw[myblue] (0,-.5) node[below]{\small $j$} -- (0,0)  ..controls (0,.15) and (1,.15) .. (1,.5)
		 ..controls (1,.85) and (0,.85) .. (0,1);
	\draw (1,-.5) node[below]{\small $i$}  -- (1,0)..controls (1,.15) and (0,.15) .. (0,.5)
		..controls (0,.85) and (1,.85) .. (1,1);
	\fdot{.5,.5};
	\fdot{.5,-.25};
}
\ &= \ 0,
&
\tikzdiagh[yscale=1.5]{0}{
	\draw (0,0)  node[below]{\small $i$} -- (0,1);
	\fdot{.5,.25};
	\fdot{.5,.75};
}
\ &= 0,
\end{align}
for all $i,j \in I_\np$.
\end{itemize}
\end{defn}
Note that $R_\g(m)$ is exactly the usual KLR algebra, as defined in~\cite{KL1,R1}. 

\smallskip

As the cyclotomic quotients of  KLR algebras categorify the irreducible, integrable modules, certain quotients of the $\p$-KLR algebras categorify the parabolic Verma modules. Fix a weight $\mu$ as in \cref{sec:paraverma}. 

\begin{defn}
The \emph{$\mu$-cyclotomic $\p$-KLR algebra} $R_\p^\mu(m)$ is the $\bZ^2$-graded dg-algebra given by taking the quotient of $R_\p(m)$ by the two-sided ideal generated by the elements:
\[
\tikzdiagh{0}{
     	 \draw  (0,-.5) node[below] {\small $j$}  --(0,.5)node [midway,tikzdot]{} node[midway,xshift=-5ex,yshift=.75ex]{\small $\mu_j {-} \mu_{j+1}$}; 
      	\draw  (1,-.5) node[below] {\small $i_1$} -- (1,.5);
      	\node at(2,0) {\small $\dots$};
      	\draw  (3,-.5) node[below] {\small $i_{m-1}$} -- (3,.5);
 }
\]
for all $j \in I_\lv$, and grading
\begin{align*}
\deg \left(
\ 
\tikzdiag{
	\draw (0,0) node[below] {\small $i$}  ..controls (0,.5) and (1,.5) .. (1,1);
	\draw[myblue] (1,0) node[below] {\small $j$}  ..controls (1,.5) and (0,.5) .. (0,1);
}
\ 
\right)
&:= \begin{cases}
q^{-2}, &\text{if $i=j$}, \\
1, &\text{if $|i-j| > 1$}, \\
q, &\text{if $|i-j| = 1$},
\end{cases},
&
\deg \left(
\ 
\tikzdiag{
	\draw (0,0) node[below] {\small $i$}  -- (0,1) node [midway,tikzdot]{};
}
\ 
\right)
& := q^2,
\end{align*}
\begin{align*}
\deg\left(
\ 
\tikzdiag{
	\draw (0,0) node[below]{\small $j$} -- (0,1);
	\fdot{.5,.5};
}
\ 
\right)
&= q^{2(\mu_j - \mu_{j+1})} h,
\end{align*}
where $q^{k_1}\lambda^{k_2}h^{k_3}$ is our notation for degree $(k_1,k_2)$ for the $\bZ^2$-grading, and in degree $k_3$ for the homological grading (in particular, $1$ means it is in degree $0$ for all gradings). Note that  $\mu_j - \mu_{j+1}$ is in $\beta + \bZ$ for $j \in I_\np$ and so, a floating dot carries a non-trivial $\lambda$-degree. 
We denote $(R_\p^\mu(m), 0)$ the dg-algebra obtained by equipping $R_\p^\mu(m)$ with a trivial differential.
\end{defn}
Note that $R_\g^\mu(m)$ is the usual cyclotomic quotient of the KLR algebra, as in \cite{KL1}.

\subsection{Categorical $U_q(\g)$-action}

For $\nu = \sum_{i \in I_\g} \nu_i \cdot i$ with $\sum_i \nu_i = m$, we write $\Seq(\nu)$ for the set of sequences $\bi = i_1 i_2 \cdots i_m$ with $i_k \in I_\bg$ such that each $i \in I_\g$ appear exactly $\nu_i$ times in $\bi$. We write $\Seq(m)$ for the set of sequences $\bi = i_1 i_2 \cdots i_m$ with $i_k \in I_\bg$. 
For $\bi = i_1i_2\cdots i_m \in \Seq(m)$, we define the idempotent of $R_\p^\mu(m)$ given by
\[
1_\bi :=  \ 
\tikzdiagh[xscale=.75]{0}{
	\draw (0,0) node[below] {\plusspacing \small $i_1$} -- (0,1);
	\draw (1,0) node[below] {\plusspacing \small $i_2$} -- (1,1);
	\node at(2,.5) {$\dots$};
	\draw (3,0) node[below] {\plusspacing \small $i_{m}$} -- (3,1);
}
\]
We define $R_\p^\mu(\nu) := \bigoplus_{\bi,\bj \in \Seq(\nu)} 1_\bj R_\p^\mu(\nu) 1_\bi$, and $R_\p^\mu := \bigoplus_\nu R_\p^\mu(\nu)$. 

\smallskip

We consider categories $ (R_\p^\mu(m),0)\amod$ of $\bZ^2$-graded left dg-modules over $(R_\p^\mu(m),0)$. For such a (dg-)module $M$, we write $q^{k_1}\lambda^{k_2} M [k_3]$ for its grading shift up by $(k_1,k_2)$ in the $\bZ^2$ grading, and up by $k_3$ in the homological grading. Note that the grading shift in homological degree twists by a sign the action of $(R_\p^\mu(m),0)$, : $x \cdot m[1] := (-1)^{\deg_h(x) }(x \cdot m) [1]$.  

\smallskip

For each $i \in I_\g$, there is a non-unital map of dg-algebras $R_\p^\mu(\nu) \rightarrow R_\p^\mu(\nu+i)$ given by adding a vertical strand with label $i$ at the right:
\[
\tikzdiagh[xscale=.75]{0}{
	\draw (0,0) node[below] { \small $j_1$} -- (0,1);
	\draw (1,0) node[below] { \small $j_2$} -- (1,1);
	\draw (2,0) -- (2,1);
	\node at(2,-.25) {$\dots$};
	\draw (3,0) node[below] { \small $j_m$} -- (3,1);
	\filldraw [fill=white, draw=black] (-.25,.8) rectangle (3.25,0.2) node[midway] {$D$};
}
\ \mapsto \ 
\tikzdiagh[xscale=.75]{0}{
	\draw (0,0) node[below] { \small $j_1$} -- (0,1);
	\draw (1,0) node[below] { \small $j_2$} -- (1,1);
	\draw (2,0) -- (2,1);
	\node at(2,-.25) {$\dots$};
	\draw (3,0) node[below] { \small $j_m$} -- (3,1);
	\filldraw [fill=white, draw=black] (-.25,.8) rectangle (3.25,0.2) node[midway] {$D$};
	\draw (4,0) node[below] { \small $i$} -- (4,1);
}
\]
This gives rise to induction and restriction functors
\begin{align*}
\Ind_\nu^{\nu,i} : (R_\p^\mu(\nu),0)\amod \rightarrow (R_\p^\mu(\nu+i),0) \amod, \\
\Res_\nu^{\nu,i} : (R_\p^\mu(\nu+i),0) \amod \rightarrow  (R_\p^\mu(\nu),0) \amod, 
\end{align*}
which are adjoint. 
We put $\alpha_i(\nu) = 2\nu_i - \nu_{i-1} - \nu_{i+1}$. 
We define
\begin{align*}
\F_i &:= \bigoplus_\nu \Ind_\nu^{\nu,i},
&
\E_i &:=  \bigoplus_\nu q^{1+ \alpha_i(\nu) -(\mu_i - \mu_{i+1})} \Res_\nu^{\nu,i},
\end{align*}
where $q^\beta = \lambda$. 

\begin{prop}[{\cite[\S 5.4]{naissevaz3}}]
The endofunctors $\E_i : (R_\p^\mu,0) \rightarrow  (R_\p^\mu,0)$ and $\F_i : (R_\p^\mu,0) \rightarrow  (R_\p^\mu,0)$ are exact. 
\end{prop}

Let $\un_\nu$ be the identity functor on $(R_\p^\mu(\nu),0)\amod$. 
Let us also introduce the endofunctors
\[
\oplus_{[k]_q}(-) := \bigoplus_{\ell = 0}^{k-1} q^{1-k+2\ell} (-),
\]
and
\[
\oplus_{[\beta+k]_q}(-) := \bigoplus_{\ell \geq 0} q^{1+2\ell} \left( \lambda^{-1} q^{-k}(-) \oplus \lambda q^k (-)[1] \right).
\]

\begin{thm}[{\cite[Theorem 5.17 and Proposition 5.19]{naissevaz3}}]\label{thm:catActionVerma}
There is a natural short exact sequence
\begin{equation}\label{eq:vermaSES}
0 \rightarrow \F_i \E_i \un_\nu \rightarrow \E_i\F_i\un_\nu \rightarrow \oplus_{[(\mu_i-\mu_{i+1})-\alpha_i(\nu)]_q} \un_\nu \rightarrow 0,
\end{equation}
for all  $i \in I_\np$, 
and there are natural isomorphisms
\begin{equation}
\begin{split}
\E_i\F_i \un_\nu &\cong \F_i\E_i \un_\nu \oplus_{[(\mu_i-\mu_{i+1})-\alpha_i(\nu)]_q} \un_\nu,  \qquad \text{if $\mu_i-\mu_{i+1}-\alpha_i(\nu) \geq 0$}, \\
\F_i\E_i \un_\nu &\cong \E_i\F_i \un_\nu \oplus_{[\alpha_i(\nu)-(\mu_i-\mu_{i+1})]_q} \un_\nu,  \qquad \text{if $\mu_i-\mu_{i+1}-\alpha_i(\nu) \leq 0$}, 
\end{split}
\label{eq:irredDirectSum}
\end{equation}
for all $i \in I_\lv$. Furthermore, we have a natural isomorphism
\begin{equation}
\F_i\E_j \cong \E_j \F_i,
\label{eq:catFiEj}
\end{equation}
for all $i \neq j \in I_\g$. Finally, we have natural isomorphisms
\begin{align}
\begin{split}
\E_i^2\E_{i\pm1} \oplus \E_{i\pm1} \E_i^2 &\cong \oplus_{[2]_q} \E_i\E_{i\pm1}\E_i,
 \\
\F_i^2\F_{i\pm1} \oplus \F_{i\pm1} \F_i^2 &\cong \oplus_{[2]_q} \F_i\F_{i\pm1}\F_i, 
\end{split}
&&&
\begin{split}
\E_i\E_j &\cong \E_j \E_i,
\\
\F_i\F_j &\cong \F_j \F_i,
\end{split}
&\text{if $|i-j| > 1$}
\label{eq:catSerre}
\end{align}
for all $i,j \in I_\g$.
\end{thm}

Let us explain diagrammatically the maps involved in the short exact sequence \cref{eq:vermaSES}. For this,  we draw $R_\p^\mu(m)$ (viewed as $R_\p^\mu(m)$-$R_\p^\mu(m)$-bimodule) as a box labeled by $m$
\[
R_\p^\mu(m)
\ = \  
\tikzdiag[xscale=.75]{
	\draw (0,-.5) -- (0,.5);
	\draw (.5,-.5) -- (.5,.5);
	\draw (1.5,-.5) -- (1.5,.5);
	\draw (2,-.5) -- (2,.5);
	\node at(1,.4) {\small $\dots$};
	\node at(1,-.4) {\small $\dots$};
	\filldraw [fill=white, draw=black] (-.25,-.25) rectangle (2.25,0.25) node[midway] {\small $m$};
}
\]
and $\otimes_m := \otimes_{R_\p^\mu(m)}$ becomes stacking boxes on top of each other. We do the same for $R_\p^\mu(\nu)$. 
Moreover, we draw $\E_i R_\p^\mu(m+1)$ and $\F_iR_\p^\mu(m)$ respectively as
\begin{align*}
\tikzdiag[xscale=.75]{
	\draw (0,-.5) -- (0,.5);
	\draw (.5,-.5) -- (.5,.5);
	\draw (1.5,-.5) -- (1.5,.5);
	\draw (2,-.5) -- (2,.5);
	\draw (2.5, -.5) -- (2.5, .25) .. controls (2.5,.5) .. (2.75,.5) node[right]{\small $i$};
	\node at(1,.4) {\small $\dots$};
	\node at(1,-.4) {\small $\dots$};
	\filldraw [fill=white, draw=black] (-.25,-.25) rectangle (2.75,0.25) node[midway] {\small $m+1$};
} &&
\tikzdiag[xscale=.75,yscale=-1]{
	\draw (0,-.5) -- (0,.5);
	\draw (.5,-.5) -- (.5,.5);
	\draw (1.5,-.5) -- (1.5,.5);
	\draw (2,-.5) -- (2,.5);
	\draw (2.5, -.5) -- (2.5, .25) .. controls (2.5,.5) .. (2.75,.5) node[right]{\small $i$};
	\node at(1,.4) {\small $\dots$};
	\node at(1,-.4) {\small $\dots$};
	\filldraw [fill=white, draw=black] (-.25,-.25) rectangle (2.75,0.25) node[midway] {\small $m+1$};
}
\end{align*}
and so on. Also, the strands can be labeled by an element in $I_\g$, fixing an idempotent. 

\begin{rem}
In order to have a graded picture, we can say that:
\begin{align*}
\deg\left(\ 
\tikzdiag[xscale=.75]{
	\draw (0,0) node[below]{\small $j_1$} -- (0,.5);
	\draw (.5,0) node[below]{\small $j_2$} -- (.5,.5);
	\draw (1.5,0) -- (1.5,.5);
	\draw (2,0) node[below]{\small $j_m$} -- (2,.5);
	\draw (2.5, 0) node[below]{\small $i$} -- (2.5, .25) .. controls (2.5,.5) .. (2.75,.5);
	\node at(1,.25) {\small $\dots$};
}
\ \right) &=  
q^{1+\alpha_i(\bj)-(\mu_i-\mu_{i+1})},
&
\deg\left(\ 
\tikzdiag[xscale=.75,yscale=-1]{
	\draw (0,0) -- (0,.5);
	\draw (.5,0) -- (.5,.5);
	\draw (1.5,0) -- (1.5,.5);
	\draw (2,0) -- (2,.5);
	\draw (2.5, 0) -- (2.5, .25) .. controls (2.5,.5) .. (2.75,.5);
	\node at(1,.25) {\small $\dots$};
}
\ \right) &=  1,
\end{align*}
for all $\bj = j_1j_2\cdots j_m \in \Seq(m)$. 
\end{rem}

Then, the injection $\F_i\E_j \un_\nu \rightarrow \E_j\F_i\un_\nu$ in  \cref{eq:vermaSES}, as well as the similar maps in \cref{eq:irredDirectSum} and in \cref{eq:catFiEj}, are given by adding a crossing as follows:
\[
\tikzdiag[xscale=.75]{
	\draw (0,-1.25) -- (0,1.25);
	\draw (.5,-1.25) -- (.5,1.25);
	\draw (1.5,-1.25) -- (1.5,1.25);
	\draw (2,-1.25) -- (2,-.5) .. controls (2,-.25) .. (2.25,-.25) node[right]{\small $j$};
	\draw (2,1.25) -- (2,.5) .. controls (2,.25) .. (2.25,.25) node[right]{\small $i$};
	\node at(1,1.2) {\small $\dots$};
	\filldraw [fill=white, draw=black] (-.25,-1) rectangle (2.25,-.5) node[midway] {\small $m$};
	\node at(1,0) {\small $\dots$}; %\filldraw [fill=white, draw=black] (-.25,-.25) rectangle (1.75,.25) node[midway] {$n{-}1$};
	\filldraw [fill=white, draw=black] (-.25,.5) rectangle (2.25,1) node[midway] {\small $m$};
	\node at(1,-1.2) {\small $\dots$};
}
\ \xmapsto{\ u_{ij} \ } \  
\tikzdiag[xscale=.75]{
	\draw (0,-1.25) -- (0,1.25);
	\draw (.5,-1.25) -- (.5,1.25);
	\draw (1.5,-1.25) -- (1.5,1.25);
	\draw (2,-1.25) -- (2,-.5) .. controls (2,0) and (2.5,0) .. (2.5,.5) -- (2.5,1.25)  node[above]{\small $j$};
	\draw (2,1.25) -- (2,.5) .. controls (2,0) and (2.5,0) .. (2.5,-.5) -- (2.5,-1.25)  node[below]{\small $i$};
	\node at(1,1.2) {\small $\dots$};
	\filldraw [fill=white, draw=black] (-.25,-1) rectangle (2.25,-.5) node[midway] {\small $m$};
	\node at(1,0) {\small $\dots$};%\filldraw [fill=white, draw=black] (-.25,-.25) rectangle (1.75,.25) node[midway] {$n{-}1$};
	\filldraw [fill=white, draw=black] (-.25,.5) rectangle (2.25,1) node[midway] {\small $m$};
	\node at(1,-1.2) {\small $\dots$};
}
\subset 
\tikzdiag[xscale=.65]{
	\draw (0,-1) -- (0,1);
	\draw (.5,-1) -- (.5,1);
	\draw (1.5,-1) -- (1.5,1);
	\draw (2,-1) -- (2,1);
	\draw (2.75,-1)  node [right] {\small $i$} .. controls (2.5,-1) ..
		(2.5,-.75) -- (2.5,.75)
		.. controls (2.5,1) .. (2.75,1) node [right] {\small $j$};
	\filldraw [fill=white, draw=black] (-.25,-.25) rectangle (2.75,.25) node[midway] {\small $m+1$};
	\node at(1,.65) {\small $\dots$};
	\node at(1,-.65) {\small $\dots$};
}
\]
Moreover, the map $\E_i\F_i \rightarrow \oplus_{[(\mu_i-\mu_{i+1})-\alpha_i(\nu)]_q} \un_\nu$ in \cref{eq:vermaSES} is given by projection on diagrams of the form:
\begin{equation}
\tikzdiag[xscale=.65]{
	\draw (0,-1) -- (0,1);
	\draw (.5,-1) -- (.5,1);
	\draw (1.5,-1) -- (1.5,1);
	\draw (2,-1) -- (2,1);
	\draw (2.75,-1)  node [right] {\small $i$} .. controls (2.5,-1) ..
		(2.5,-.75) -- (2.5,.75)
		.. controls (2.5,1) .. (2.75,1) node [right] {\small $i$};
	\filldraw [fill=white, draw=black] (-.25,-.25) rectangle (2.75,.25) node[midway] {\small $m+1$};
	\node at(1,.65) {\small $\dots$};
	\node at(1,-.65) {\small $\dots$};
}
\twoheadrightarrow
\bigoplus_{\ell \geq 0}
\tikzdiag[xscale=.65]{
	\draw (0,-1) -- (0,1);
	\draw (.5,-1) -- (.5,1);
	\draw (1.5,-1) -- (1.5,1);
	\draw (2,-1) -- (2,1);
	\draw (3,-1)  node [right] {\small $i$} .. controls (2.75,-1) ..
		(2.75,-.75) -- (2.75,.75) node[midway, tikzdot]{} node[midway,xshift=1.5ex, yshift=.5ex]{\small $\ell$}
		.. controls (2.75,1) .. (3,1) node [right] {\small $i$};
	\filldraw [fill=white, draw=black] (-.25,-.25) rectangle (2.25,.25) node[midway] {\small $m$};
	\node at(1,.65) {\small $\dots$};
	\node at(1,-.65) {\small $\dots$};
}
\oplus
\tikzdiag[xscale=.65]{
	\draw (0,-1) -- (0,-.75) .. controls (0,-.5) and (1,-.5) .. (1,-.25) .. controls (1,0) and (0,0) .. (0,.25)
		-- (0,1);
	\draw (.5,-1) -- (.5,-.75) .. controls (.5,-.5) and (1.5,-.5) .. (1.5,-.25) .. controls (1.5,0) and (.5,0) .. (.5,.25)
		-- (.5,1);
	\draw (1.5,-1) -- (1.5,-.75) .. controls (1.5,-.5) and (2.5,-.5) .. (2.5,-.25) .. controls (2.5,0) and (1.5,0) .. (1.5,.25)
		-- (1.5,1);
	\draw (2,-1) -- (2,-.75) .. controls (2,-.5) and (3,-.5) .. (3,-.25) .. controls (3,0) and (2,0) .. (2,.25)
		-- (2,1);
	\draw (3,-1)  node [right] {\small $i$} .. controls (2.75,-1) ..
		(2.75,-.75) .. controls (2.75,-.5) and (0,-.5) .. (0,-.25) node[pos=1,tikzdot]{} node[pos=1,xshift=-1.5ex]{\small $\ell$}
		 .. controls (0,0) and (2.75,0) .. (2.75,.25)
		-- (2.75,.75)
		.. controls (2.75,1) .. (3,1) node [right] {\small $i$};
 	\fdot{.6,-.25};
	\node at(1,.9) {\small $\dots$};
	\node at(1,-.9) {\small $\dots$};
	\node at(2,-.25) {\small $\dots$};
	\filldraw [fill=white, draw=black] (-.25,.25) rectangle (2.25,.75) node[midway] {\small $m$};
}
\label{eq:diagprojection}
\end{equation}

\begin{rem}
There exist functors $\E_i^{(a)}$ categorifying the action of the divided power $E_i^a/[a]_q!$, which are given exactly as in \cite[\S2.5]{KL1}. In particular, \cref{eq:catSerre} becomes
\begin{equation}
\begin{split}
\E_i^{(2)}\E_{i\pm1} \oplus \E_{i\pm1} \E_i^{(2)} &\cong \E_i\E_{i\pm1}\E_i, \\
\F_i^{(2)}\F_{i\pm1} \oplus \F_{i\pm1} \F_i^{(2)} &\cong \F_i\F_{i\pm1}\F_i,
\end{split}
\label{eq:catSerreDiv}
\end{equation}
for all $i \in I_\g$. 
\end{rem}

Since the results of \cref{thm:catActionVerma} need to take into consideration infinite direct sums, we need a refined version of Grothendieck group to decategorify $(R_\p^\mu,0)$. This can be done using the asymptotic Grothendieck groups $\bKO$, as introduced in \cite{asympK0}, and requiring $\Bbbk$ to be a field. 
Then, as explained in \cite[\S6]{naissevaz3}, one can take a certain subcategory $\cD^{lf}(R_\p^\mu,0)$ of the derived category of $(R_\p^\mu, 0)$, such that $\bKO(\cD^{lf}(R_\p^\mu,0)) \cong M^\p(\mu)$, as $U_q(\g)$-module with action of $E_i,F_i$ induced by $\E_i, \F_i$. 

\begin{rem}
One can define the functors $\E_i,\F_i$ using derived version of the induction and restriction functor instead. Conceptually, it would be more accurate. However, it requires a much more technically difficult framework to make sense of an exact triangle of functors (see \cite[\S 7]{naissevaz3}). 
\end{rem}

\subsection{Recovering cyclotomic KLR}\label{sec:cycloN}

For $\mu = \{\mu_i\}$, let $\overline \mu$ be given by specializing all $\beta$ to $N$ in $\mu_i$.  
Similarly, we can specialize the degree $\lambda$ in $R_\p^\mu(m)$ to $q^N$ for any $N \in \bZ$, giving a $\bZ$-graded dg-algebra $(R_\p^{\overline \mu}(m),0)$. Then, if $\overline \mu_j - \overline \mu_{j+1} \in \bN_0$ for all $j \in I_\np$, we can equip $R_\p^{\overline \mu}(m)$ with a non-trivial differential $d_N$ given by
\begin{align*}
d_N\left( \ 
\tikzdiag{
	\draw (0,0) node[below] {\small $i$}  ..controls (0,.5) and (1,.5) .. (1,1);
	\draw[myblue] (1,0) node[below] {\small $j$}  ..controls (1,.5) and (0,.5) .. (0,1);
}
\ \right)
=
d_N\left( \ 
\tikzdiag{
	\draw (0,0) node[below] {\small $i$}  -- (0,1) node [midway,tikzdot]{};
}
\ \right)
&= 0,
&
d_N\left( \ 
\tikzdiag{
	\draw (0,0) node[below]{\small $j$} -- (0,1);
	\fdot{.5,.5};
}
\ \right)
&= (-1)^{\overline \mu_j{-}\overline \mu_{j+1}}
\tikzdiag{
	\draw (0,0) node[below]{\small $j$} -- (0,1) node[midway,tikzdot]{} node[midway,xshift=5ex,yshift=.75ex]{\small $\overline \mu_j{-}\overline \mu_{j+1}$};
}
\end{align*} 
and extending using the graded Leibniz rule. 
It is not hard to check this is well defined.
 
\begin{thm}[{\cite[Theorem 5.20]{naissevaz3}}] \label{thm:formalRpN}
The $\bZ$-graded dg-algebra $(R_\p^{\overline\mu}(m), d_N)$ is formal with homology $H(R_\p^{\overline \mu}(m), d_N) \cong R_\g^{\overline \mu}(m)$. 
\end{thm}

Furthermore, by considering (derived) induction and restriction functors over $(R_p^{\overline \mu}(\nu),d_N)$, we obtain endofunctors $\E_i^N, \F_i^N$ on $\cD(R_\p^{\overline \mu}, d_N)$. 
The short exact sequence \cref{eq:vermaSES} becomes a short exact sequence of dg-bimodules with the infinite direct sum $(\oplus_{[(\mu_i-\mu_{i+1})-\alpha_i(\nu)_q]} \un_\nu, d_N)$ equipped with a non-trivial differential. This infinite direct sum is quasi-isomorphic to the finite direct sum $\oplus_{[(\overline \mu_i-\overline \mu_{i+1})-\alpha_i(\nu)]_q} (\un_\nu, 0)$. 
Also, the short exact sequence induces a long exact sequence in homology, which truncates and yields half the maps needed to construct the corresponding direct sum isomorphisms \cref{eq:irredDirectSum} for $R_\g^{\overline  \mu}$. See \cite[\S5.2]{naissevaz3} for more details.

%%%%%%%%%%%%%%%%%%%% End of File  %%%%%%%%%%%%%%%%%%%%
%
%  

%%% Local Variables:
%%% mode: latex
%%% TeX-master: "../homflypt-Verma"
%%% End:

%%%%%%%%%%%%%%%%%%%%%%%%%%%%%%%%%%%%%%
%%%                               									%%%
%%%  KR homology									%%%
%%%                               									%%%
%%%%%%%%%%%%%%%%%%%%%%%%%%%%%%%%%%%%%%

\section{Link homology}\label{sec:cat}

We fix $\p$ and $\g$ as in \cref{ssec:linkpverma}, and highest weight $\beta = ((\beta)^n, (0)^n)$. 
We consider the $2$-category $\mathfrak M^\p( \beta)$ where objects are the categories $(R_\p^{\beta}, 0)\amod$ and hom spaces are categories of functors between them. 

\smallskip

Let $\dot{\mathcal{U}}(\mathfrak{sl}_n)$ denote  Khovanov--Lauda and Rouquier's 2-Kac--Moody algebra from~\cite{KL3,R1} 
(which are the same by~\cite{brundan}).  
The following result is immediate, thanks to \cref{thm:catActionVerma} and the fact that $\E_i$ and $\F_i$ are adjoint.
\begin{lem}\label{lem:KLR2KM-2M}
  There is a 2-action of $~\dot{\mathcal{U}}(\mathfrak{sl}_n\times\mathfrak{sl}_n)$
  on  $\tM^{\p}(\beta)$. 
\end{lem}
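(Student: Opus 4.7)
The plan is to verify that the restriction of the $(\F_i^\beta, \E_i^\beta)_{i \in I}$ data to the simple roots $i \in I \setminus \{0\}$ of $\sln \times \sln$ satisfies the defining axioms of a 2-representation of the 2-Kac--Moody algebra $\dot{\mathcal{U}}(\sln \times \sln)$ in the sense of Rouquier~\cite{R1}, which by~\cite{brundan} is equivalent to the Khovanov--Lauda presentation~\cite{KL3}. The two factors correspond to $\{1,\dots,n-1\}$ and $\{-(n-1),\dots,-1\}$, which form two connected components of the Dynkin diagram of $\glnn$ once the vertex $0$ is removed; in particular there is no interaction between the two $\sln$ factors to worry about, and the ``parabolic'' direction $i=0$ (where only a short exact sequence~\eqref{eq:EFrel-k} is available, not a splitting) is precisely excluded.

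First, I would produce the generating 2-morphisms. The KLR dots and crossings on strands labeled by $I\setminus\{0\}$ are already part of $R_\p^\beta(\nu)$, and they act on the induction bimodules $R_\p^\beta(\nu+i)e_{(\nu,i)}$ by right multiplication, giving natural transformations between compositions of $\F_j^\beta$'s. The relations of $R_\p^\beta$ restricted to such strands coincide with the standard KLR relations, so the KLR axioms of $\dot{\mathcal{U}}(\sln\times\sln)$ are automatic. The cups and caps come from the adjunction $(\F_i^\beta, \E_i^\beta[\text{shift}])$ built into the definition of $\E_i^\beta$; to obtain the second (left) adjunction up to the prescribed shift, I would use that $R_\p^\beta(\nu+i)e_{(\nu,i)}$ is biprojective (as in the proof of~\fullref{prop:exactfunct}) together with the standard identification of left and right adjoints for a finitely generated projective bimodule over a graded algebra.

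Next, I would verify the $\mathfrak{sl}_2$ commutator relations. For each $k \in I\setminus\{0\}$, the crucial isomorphisms~\eqref{eq:EFrel-notk} and~\eqref{eq:EFrel-notk2} are actual direct sum decompositions, not merely short exact sequences, and produce the correct Grothendieck shadow $[\nu_{k\pm 1} - 2\nu_k]_q$ required by Rouquier's weight-dependent $\mathfrak{sl}_2$ axiom. The commutativity $\E_i^\beta\F_j^\beta \cong \F_j^\beta\E_i^\beta$ for $i \neq j$ in $I\setminus\{0\}$ is provided by the same theorem, and the Serre-type isomorphisms for distant and adjacent simple roots follow from~\fullref{prop:serremodules}. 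Finally, closure under extensions and the fact that these categorical data consist of exact functors (\fullref{prop:exactfunct}) ensures the structure extends from the generators to all of $\tM^\p(\beta)$.

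The main obstacle is not the existence of the relevant isomorphisms (those are given by~\fullref{thm:sesRp} and~\fullref{prop:serremodules}) but producing \emph{explicit} 2-morphisms realising them and checking Rouquier's invertibility axiom -- namely, that the specific 2-morphism built from KLR crossings together with units and counits of the biadjunction, which categorifies the $[E,F]$ relation, is an isomorphism on positive weights. The explicit splittings of cases (i) and (ii) of~\fullref{thm:sesRp} lift from the bimodule level to such natural transformations, and invertibility follows from the graded-dimension bookkeeping that already underlies the proof of that theorem. Once these 2-morphisms are in hand, the remaining Rouquier axioms reduce to verifying identities of natural transformations in the image of $R_\p^\beta$, which are controlled by the basis in~\fullref{thm:basis} applied to strands in $I\setminus\{0\}$.
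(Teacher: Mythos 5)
Your proposal is correct and follows the same route as the paper, which simply observes that the lemma is immediate from \fullref{thm:sesFunct} (the direct-sum $\mathfrak{sl}_2$ and Serre isomorphisms for the simple roots $k\neq 0$, i.e.\ the two disconnected $A_{n-1}$ components) together with the adjunction between $\F_i^\beta$ and $\E_i^\beta$. Your write-up just makes explicit the verification (KLR generators as 2-morphisms, biadjunction, Rouquier/Brundan equivalence) that the paper leaves implicit.
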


The lemma implies that, in particular, the categorified $q$-Schur algebra
$\mathcal{S}(0,n)$ from~\cite{MSV2} acts on  $\tM^{\p}(\beta)$.

\subsection{Braiding}\label{sec:braiding}

By a well-known construction due to Cautis~\cite{cautis-clasp},
we know how to associate  a 
chain complex in the 2-category $Kom(\dot{\mathcal{U}}(\sln))$
of complexes in the $\Hom$-categories of $\dot{\mathcal{U}}(\sln)$,
called a \emph{Rickard complex},  
which satisfies the braid relations up to homotopy.

\smallskip 

In our context, the Rickard complex is always truncated. For a positive crossing  between the $i$th and $(i+1)$th strands, it is given by the mapping cone
\begin{align*}
\sigma_i &\mapsto \cone\left( \F_i\E_i \un_\nu  \xrightarrow{\varepsilon_i} q^{-1} \un_\nu \right),
%\]
\intertext{where $\varepsilon_i$ is the counit of the adjunction $\un_{(1,1)}\F_i \dashv q^{-1}\E_i\un_{(1,1)}$. 
For a negative crossing, it is given by}
%\[
\sigma_i^{-1} &\mapsto \cone \left(q \un_\nu \xrightarrow{\eta_i} \E_i \F_i \un_\nu\right)[-1],
\end{align*}
with $\eta_i$ being the unit of the adjunction $\F_i\un_{(1,1)} \dashv q\un_{(1,1)}\E_i$. 

\begin{rem}
Note that for $i \neq 0$, $\E_i$ and $\F_i$ are biadjoint (up to degree shift). We also have $\F_i\E_i \un_{(1,1)} \cong \E_i\F_i \un_{(1,1)}$, and thus we can use the unit and counit for the other adjunction to build the mapping cone corresponding to the crossings.
\end{rem}

Diagrammatically, we can picture the maps $\varepsilon_i$ and $\eta_i$ as the following:
\begin{align*}
\tikzdiag[xscale=.75]{
	\draw (0,-1.25) -- (0,1.25);
	\draw (.5,-1.25) -- (.5,1.25);
	\draw (1.5,-1.25) -- (1.5,1.25);
	\draw (2,-1.25) -- (2,-.5) .. controls (2,-.25) .. (2.25,-.25) node[right]{\small $i$};
	\draw (2,1.25) -- (2,.5) .. controls (2,.25) .. (2.25,.25) node[right]{\small $i$};
	\node at(1,1.2) {\small $\dots$};
	\filldraw [fill=white, draw=black] (-.25,-1) rectangle (2.25,-.5) node[midway] {\small $\nu$};
	\node at(1,0) {\small $\dots$}; 
	\filldraw [fill=white, draw=black] (-.25,.5) rectangle (2.25,1) node[midway] {\small $\nu$};
	\node at(1,-1.2) {\small $\dots$};
}
\ &\xmapsto{\ \varepsilon_i \ } \  
\tikzdiag[xscale=.75]{
	\draw (0,-1.25) -- (0,1.25);
	\draw (.5,-1.25) -- (.5,1.25);
	\draw (1.5,-1.25) -- (1.5,1.25);
	%\draw (2.5,-1.25)  node[below]{\small $i$}  -- (2.5,1.25) ;
	\draw (2,-1.25) -- (2,-.5) .. controls (2,-.25) .. (2.25,-.25) .. controls (2.5,-.25) .. (2.5,0)
	.. controls (2.5,.25) .. (2.25,.25)  .. controls (2,.25) ..   (2,.5)--(2,1.25);
	%\draw (2,-1.25) -- (2,-.5) .. controls (2,0) and (2.5,0) .. (2.5,.5) -- (2.5,1.25)  node[above]{\small $i$};
	%\draw (2,1.25) -- (2,.5) .. controls (2,0) and (2.5,0) .. (2.5,-.5) -- (2.5,-1.25)  node[below]{\small $i$};
	\node at(1,1.2) {\small $\dots$};
	\filldraw [fill=white, draw=black] (-.25,-1) rectangle (2.25,-.5) node[midway] {\small $\nu$};
	\node at(1,0) {\small $\dots$};%\filldraw [fill=white, draw=black] (-.25,-.25) rectangle (1.75,.25) node[midway] {$n{-}1$};
	\filldraw [fill=white, draw=black] (-.25,.5) rectangle (2.25,1) node[midway] {\small $\nu$};
	\node at(1,-1.2) {\small $\dots$};
}
\ \subset \  
\tikzdiag[xscale=.65]{
	\draw (0,-1) -- (0,1);
	\draw (.5,-1) -- (.5,1);
	\draw (1.5,-1) -- (1.5,1);
	\draw (2,-1) -- (2,1);
	%\draw (2.5,-.75) -- (2.5,.75);
	\filldraw [fill=white, draw=black] (-.25,-.25) rectangle (2.25,.25) node[midway] {\small $\nu$};
	\node at(1,.65) {\small $\dots$};
	\node at(1,-.65) {\small $\dots$};
}
\intertext{and}
\tikzdiag[xscale=.75]{
	\draw (0,-.5) -- (0,.5);
	\draw (.5,-.5) -- (.5,.5);
	\draw (1.5,-.5) -- (1.5,.5);
	\draw (2,-.5) -- (2,.5);
	\node at(1,.4) {\small $\dots$};
	\node at(1,-.4) {\small $\dots$};
	\filldraw [fill=white, draw=black] (-.25,-.25) rectangle (2.25,0.25) node[midway] {\small $\nu$};
}
\ &\xmapsto{ \ \eta_i \ } \ 
\tikzdiag[xscale=.75]{
	\draw (0,-.5) -- (0,.5);
	\draw (.5,-.5) -- (.5,.5);
	\draw (1.5,-.5) -- (1.5,.5);
	\draw (2,-.5) -- (2,.5);
	\draw (2.75,-.5) node[right]{\small $i$} .. controls (2.5,-.5) .. (2.5, -.25) -- (2.5, .25) .. controls (2.5,.5) .. (2.75,.5) node[right]{\small $i$};
	\node at(1,-.4) {\small $\dots$};
	\node at(1,.4) {\small $\dots$};
	\filldraw [fill=white, draw=black] (-.25,-.25) rectangle (2.25,0.25) node[midway] {\small $\nu$};
}
 \ \subset  \ 
\tikzdiag[xscale=.75]{
	\draw (0,-.5) -- (0,.5);
	\draw (.5,-.5) -- (.5,.5);
	\draw (1.5,-.5) -- (1.5,.5);
	\draw (2,-.5) -- (2,.5);
	\draw (2.75,-.5) node[right]{\small $i$} .. controls (2.5,-.5) .. (2.5, -.25) -- (2.5, .25) .. controls (2.5,.5) .. (2.75,.5) node[right]{\small $i$};
	\node at(1,-.4) {\small $\dots$};
	\node at(1,.4) {\small $\dots$};
	\filldraw [fill=white, draw=black] (-.25,-.25) rectangle (2.75,0.25) node[midway] {\small $\nu+i$};
}
\end{align*}

\smallskip

Following Cautis's construction in~\cite{cautis-clasp}, we associate a Rickard complex 
$C'(b)$ in the 2-category $Kom(\tM^{\p}(\beta))$
of complexes in the  
$\Hom$-categories of $\tM^{\p}(\beta)$,
to each braid diagram $b$ on $n$ strands. This gives a braiding on
the homotopy category 
of $Kom(\tM^{\p}(\beta))$.

%%%%%%%%%%%%%%%%%%%%%%%%%%%%%%%%%%%%%%%%%%%%%%%%%
\subsection{Invariance under the Markov moves}
 
Closing the diagram for $b$ consists of 
precomposing $C'(b)$ with the appropriate word on functors $\F_{-n+1},\dotsc , \F_{n-1}$,
and composing it with the appropriate word from $\E_{-n+1},\dotsc , \E_{n-1}$,
following the patterns in~\cref{eq:closingbraid}. 
This results in a chain complex $C'(\cl(b))$ in $Kom(\tM^{\p}(\beta))$,
which is a complex of endofunctors of the block 
corresponding to the highest weight in $M^{\p}( (\beta)^n,(0)^n )$, that is,
a complex of $\bZ^2$-graded $\Bbbk$-vector spaces.

\begin{lem}\label{lem:ladderisotopy}
  The homotopy type of the chain complex $C'(\cl(b))$ is invariant under isotopy of ladder diagrams:
\[
\tikz[very thick, yscale=.75, baseline={([yshift=.8ex]current bounding box.center)}]{
  \draw [dotted] (1,-.95)  -- (1,.1);
  \draw  (1,-1.5) node[below]{\small $1$} -- (1,-1);
\draw [->] (1,0)  -- (1,.5);
\draw [dotted] (1.75,-1.5)  node[below]{\small $0$}   -- (1.75,-1);
\draw [dotted] (1.75,0)   -- (1.75,.5);
\draw  (1.75,-1)   -- (1.75,0);
\draw [directed=.6] (1,-1) -- (1.75,-1); %h
\draw [directed=.6] (1.75,0) -- (1,0); %h
\node at (0.5,-0.5) {$\cdots$};
\node at (2.5,-0.5) {$\cdots$};
}
\ \cong \ 
\tikz[very thick, yscale=.75, baseline={([yshift=.8ex]current bounding box.center)}]{
\draw [->] (1,-1.5) node[below]{\small $1$}   -- (1,.5);
\draw [dotted] (1.75,-1.5)  node[below]{\small $0$}   -- (1.75,.5);
\node at (0.5,-0.5) {$\cdots$};
\node at (2.5,-0.5) {$\cdots$};
}, 
\qquad
\tikz[very thick,xscale=-1, yscale=.75, baseline={([yshift=.8ex]current bounding box.center)}]{
  \draw [dotted] (1,-.95)  -- (1,.1);
  \draw  (1,-1.5) node[below]{\small $1$} -- (1,-1);
\draw [->] (1,0)  -- (1,.5);
\draw [dotted] (1.75,-1.5)  node[below]{\small $0$}   -- (1.75,-1);
\draw [dotted] (1.75,0)   -- (1.75,.5);
\draw  (1.75,-1)   -- (1.75,0);
\draw [directed=.6] (1,-1) -- (1.75,-1); %h
\draw [directed=.6] (1.75,0) -- (1,0); %h
\node at (0.5,-0.5) {$\cdots$};
\node at (2.5,-0.5) {$\cdots$};
}
\ \cong \ 
\tikz[very thick,xscale=-1, yscale=.75, baseline={([yshift=.8ex]current bounding box.center)}]{
\draw [->] (1,-1.5) node[below]{\small $1$}   -- (1,.5);
\draw [dotted] (1.75,-1.5)  node[below]{\small $0$}   -- (1.75,.5);
\node at (0.5,-0.5) {$\cdots$};
\node at (2.5,-0.5) {$\cdots$};
}
\]
\end{lem}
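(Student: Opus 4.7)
The plan is to establish both equalities as isomorphisms of 1-morphisms in the 2-Verma module $\tM^{\p}(\beta)$. Since $C'(\cl(b))$ is built by composing such 1-morphisms with chain complexes attached at crossings, any isomorphism of the 1-morphisms represented by the two sides of each equation induces a homotopy equivalence of the total chain complex, which is what the statement asserts.

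First I would interpret each ladder diagram as a composition of the basic functors. Reading the left-hand side of the first equation from bottom to top, the lower horizontal arrow encodes $\F_k$ (transporting a unit from the $1$-labeled strand to the $0$-labeled strand), and the upper horizontal arrow encodes $\E_k$ (returning it), where $k$ is the index of the simple root separating the two adjacent strands in question. The composite 1-morphism is $\E_k \F_k \un_\mu$, whereas the right-hand side is the identity $\un_\mu$ on the category at the weight $\mu$ in which the local pattern sits.

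The equality $\E_k \F_k \un_\mu \cong \un_\mu$ is then an instance of the categorical commutator relation~\eqref{eq:EFrel-notk} of~\fullref{thm:sesFunct} at a specific weight: in the notation of that theorem one has
\[
\E_k \F_k \un_\nu \;\cong\; \F_k \E_k \un_\nu \;\oplus\; \bigl(\oplus_{[\nu_{k\pm 1} - 2\nu_k]_q}\un_\nu\bigr),
\]
and I would verify the two conditions needed for the right-hand side to collapse to $\un_\nu$. First, $\F_k \E_k \un_\mu = 0$, because applying $\E_k$ at a weight with local $\glnn$-pattern $(\mu_k,\mu_{k+1}) = (1,0)$ would produce $(2,-1)$, which lies outside the support of $M^{\p}(\beta)$. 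Second, the shift $[\nu_{k\pm 1} - 2\nu_k]_q$ evaluates to $[1]_q = 1$ at the specific $\nu$ arising here, so the cokernel reduces to $\un_\nu$ itself.

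The second equality is handled symmetrically: the mirror-reflected picture represents $\F_k \E_k \un_\mu$ at a weight with local pattern $(0,1)$, and the same reasoning using the dual case~\eqref{eq:EFrel-notk2} of~\fullref{thm:sesFunct} delivers the analogous isomorphism. The main bookkeeping obstacle is matching the local $\glnn$-weight pattern shown in the picture with the $\mathfrak{sl}$-weight data that makes the quantum shift equal $[1]_q$; this matching is forced by the ladder conventions of~\S\ref{ssec:linkinvs} together with the fact that the pictured isotopy occurs at the boundary of the closure~\eqref{eq:closingbraid}, where the ambient weight is locally the highest one.
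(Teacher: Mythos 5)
Your argument is correct and is essentially the paper's own proof: the paper simply observes that the two equalities are straightforward consequences of~\eqref{eq:EFrel-notk} and~\eqref{eq:EFrel-notk2} respectively, which is exactly the route you take after reading the ladders as $\E_k\F_k\un_\mu$ at local weight $(1,0)$ and $\F_k\E_k\un_\mu$ at local weight $(0,1)$. Your extra verifications --- that the complementary composition vanishes because the shifted weight leaves $\Lambda^{\beta}_{n,n}$, and that the multiplicity is $[1]_q$ --- just spell out the details the paper leaves implicit.
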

\begin{proof}
These are straightforward consequences of~\cref{eq:irredDirectSum}, since they give $\E_i\F_i\un_{(1,0)} \cong \un_{(1,0)}$ and $\F_i\E_i\un_{(0,1)} \cong \un_{(0,1)}$. 
\end{proof}

\begin{prop}\label{prop:CmarkovI}
  The homotopy type of the chain complex $C'(\cl(b))$ is invariant under the Markov of type I.
\end{prop}

\begin{proof}
We want to show that
\begin{equation}\label{eq:Bi-stepleft}
  \raisebox{-38pt}{
    \reflectbox{
\begin{tikzpicture}
  \draw [very thick] (0.25,-1.7) node[below]{\reflectbox{\small $1$}} -- (0.25,-1.4);
  \draw [very thick] (0.25,-.9) -- (0.25,0);
  \draw [very thick,dotted] (0.25,.05)  -- (0.25,.5);
  \draw [very thick,dotted] (1,-.45)  -- (1,.1);
  \draw [very thick] (1,-.9)  -- (1,-.5);
  \draw [very thick] (1,-1.7) node[below]{\reflectbox{\small $1$}} -- (1,-1.4);
  \draw [very thick,directed=.6] (0.25,0) -- (1,0); %h
%box
  \draw [very thick] (0.15,-.9) -- (1.1,-.9); %h
  \draw [very thick] (1.1,-1.4) -- (0.15,-1.4); %h
  \draw [very thick] (1.1,-1.4)  -- (1.1,-.9);
  \draw [very thick] (0.15,-1.4)  -- (0.15,-.9);
  \node at (.61,-1.16) {\reflectbox{\small $\sigma_i^{\pm 1}$}};
\draw [very thick,->] (1,0)  -- (1,.5);
\draw [very thick,dotted] (1.75,-1.7)   -- (1.75,-.5);
\draw [very thick,->] (1.75,-.5)   -- (1.75,.5);
\draw [very thick,directed=.6] (1,-.5) -- (1.75,-.5); %h
\node at (-.5,-0.8) {$\cdots$};
\node at (2.5,-0.8) {$\cdots$};
\end{tikzpicture} }}
\cong
\raisebox{-38pt}{
\reflectbox{
  \begin{tikzpicture}
  \draw [very thick] (0.25,-1.7) node[below]{\reflectbox{\small $1$}} -- (0.25,-.7);
  \draw [very thick,dotted] (0.25,.5)  -- (0.25,-.65);
  \draw [very thick] (1,-.7)  -- (1,-.3);
  \draw [very thick] (1,-1.7) node[below]{\reflectbox{\small $1$}} -- (1,-1.2);
  \draw [very thick,directed=.6] (0.25,-.7) -- (1,-.7); %h
%box
  \draw [very thick] (.9,-.3) -- (1.85,-.3); %h
  \draw [very thick] (1.85,.2) -- (.9,.2); %h
  \draw [very thick] (1.85,-.3)  -- (1.85,.2);
  \draw [very thick] (.9,-.3)  -- (.9,.2);
  \node at (1.4,-.06) {\reflectbox{\small $\sigma_{i-1}^{\pm 1}$}};
\draw [very thick,->] (1,.2)  -- (1,.5);
\draw [very thick,dotted] (1,-1.15)   -- (1,-.75);
\draw [very thick,dotted] (1.75,-1.7)   -- (1.75,-1.23);
\draw [very thick] (1.75,-1.2)   -- (1.75,-.3);
\draw [very thick,->] (1.75,.2)   -- (1.75,.5);
\draw [very thick,directed=.6] (1,-1.2) -- (1.75,-1.2); %h
\node at (-.5,-0.8) {$\cdots$};
\node at (2.5,-0.8) {$\cdots$};
\end{tikzpicture} }} ,
\end{equation}
where $\sigma_i^{\pm 1}$ denote the mapping cones defined in \cref{sec:braiding}, and similarly for downward oriented strands, and for the bottom part of the braid closure. 
Then,
the first Markov move  can be decomposed in a sequence of moves of the following form:
\begin{equation}\label{eq:Bitraceup}
\raisebox{-38pt}{
\reflectbox{
  \begin{tikzpicture}
  \draw [very thick] (0.25,-.9)  -- (0.25,0);
  \draw [very thick,dotted] (0.25,.05)  -- (0.25,.5);
  \draw [very thick,dotted] (1,-.45)  -- (1,-.05);
  \draw [very thick] (1,-.9)  -- (1,-.5);
  \draw [very thick,directed=.6] (0.25,0) -- (1,0); %h
  \draw [very thick] (1,0)  -- (1,.5);
\draw [very thick,directed=.6] (1,.5) -- (1.75,.5); %h
\draw [very thick] (1.75,0)  -- (1.75,.5);
\draw [very thick] (1.75,-1.7)node[below]{\reflectbox{\small $-1$}}  -- (1.75,-.5);
\draw [very thick,dotted] (1.75,-.45)  -- (1.75,-.05);
\draw [very thick] (2.5,-1.7) node[below]{\reflectbox{\small $-1$}}  -- (2.5,0);
\draw [very thick,dotted] (2.5,.05)  -- (2.5,.5);
\draw [very thick,directed=.6] (1.75,0) -- (2.5,0); %h
\draw [very thick,directed=.6] (1,-.5) -- (1.75,-.5); %h
%box
  \draw [very thick] (0.15,-.9) -- (1.1,-.9); %h
  \draw [very thick] (1.1,-1.4) -- (0.15,-1.4); %h
  \draw [very thick] (1.1,-1.4)  -- (1.1,-.9);
  \draw [very thick] (0.15,-1.4)  -- (0.15,-.9);
  \node at (.61,-1.16) {\reflectbox{\small $\sigma_1^{\pm 1}$}};
 \draw [very thick] (0.25,-1.7) node[below]{\reflectbox{\small $1$}}  -- (0.25,-1.4);
 \draw [very thick] (1,-1.7) node[below]{\reflectbox{\small $1$}}  -- (1,-1.4);  
\node at (-.5,-0.8) {$\cdots$};
\node at (3.25,-0.8) {$\cdots$};
\end{tikzpicture} }}
\,\cong\  
\raisebox{-38pt}{
  \reflectbox{
\begin{tikzpicture}
  \draw [very thick] (0.25,-1.7) node[below]{\reflectbox{\small $1$}}  -- (0.25,0);
  \draw [very thick,dotted] (0.25,.05)  -- (0.25,.5);
  \draw [very thick,dotted] (1,-.45)  -- (1,-.05);
  \draw [very thick] (1,-1.7) node[below]{\reflectbox{\small $1$}} -- (1,-.5);
  \draw [very thick,directed=.6] (0.25,0) -- (1,0); %h
  \draw [very thick] (1,0)  -- (1,.5);
\draw [very thick,directed=.6] (1,.5) -- (1.75,.5); %h
\draw [very thick] (1.75,0)  -- (1.75,.5);
\draw [very thick] (1.75,-1.7) node[below]{\reflectbox{\small $-1$}}  -- (1.75,-1.4);
\draw [very thick,dotted] (1.75,-.45)  -- (1.75,-.05);
\draw [very thick] (2.5,-1.7) node[below]{\reflectbox{\small $-1$}}  -- (2.5,-1.4);
\draw [very thick,dotted] (2.5,.05)  -- (2.5,.5);
\draw [very thick,directed=.6] (1.75,0) -- (2.5,0); %h
\draw [very thick,directed=.6] (1,-.5) -- (1.75,-.5); %h
%box
  \draw [very thick] (1.65,-.9) -- (2.6,-.9); %h
  \draw [very thick] (2.6,-1.4) -- (1.65,-1.4); %h
  \draw [very thick] (2.6,-1.4)  -- (2.6,-.9);
  \draw [very thick] (1.65,-1.4)  -- (1.65,-.9);
  \node at (2.12,-1.16) {\reflectbox{\small $\sigma_{-1}^{\pm 1}$}};
 \draw [very thick] (1.75,-.9) -- (1.75,-.5);
 \draw [very thick] (2.5,-.9)  -- (2.5,0);  
\node at (-.5,-0.8) {$\cdots$};
\node at (3.25,-0.8) {$\cdots$};
\end{tikzpicture} }} ,
\end{equation}
(to avoid cluttering we have dropped the $\beta$'s from the pictures, 
since it is clear where to place them), 
and similar for the bottom part of the closure. 

Relation~\cref{eq:Bi-stepleft} requires an isomorphism of 1-morphisms in $\tM^{\p}(\beta)$
\begin{equation}\label{eq:EiEimEiFiiso}
\E_i\E_{i-1}\E_i\F_i\un_{(\dotsc ,0,1,1, \dotsc )} \cong \E_{i-1}\F_{i-1}\E_i\E_{i-1}\un_{(\dotsc ,0,1 ,1,\dotsc)}
\end{equation}
which is proved in~\cite[Lemma 3.19]{QR1}, after applying~\cref{lem:ladderisotopy}, and using \cref{lem:KLR2KM-2M}. 
Moreover, the computations in~\cite[Lemma 3.19]{QR1} also implies that the diagrams 
\[
\begin{tikzcd}[column sep=-4ex]
\E_i\E_{i-1}\E_i\F_i\un_{(\dotsc ,0,1,1, \dotsc )}
\ar{rr}{\eqref{eq:EiEimEiFiiso}}
& &
 \E_{i-1}\F_{i-1}\E_i\E_{i-1}\un_{(\dotsc ,0,1 ,1,\dotsc)}
 \\
 & q\E_i\E_{i-1}\un_{(\dotsc ,0,1,1, \dotsc )} \ar[leftarrow]{ul}{\varepsilon_i} \ar[swap,leftarrow]{ur}{\varepsilon_{i-1}}  &
\end{tikzcd}
\]
commute, and thus we obtain~\cref{eq:Bi-stepleft} for $\sigma_i$. The case for $\sigma_i^{-1}$ is similar.

We write $\E_\pm$ instead of $\E_{\pm 1}$, and the same for $\F_\pm$. 
To prove relation~\cref{eq:Bitraceup} we write 
\begin{align*}
\Le\un_{(\beta-1,\beta-1,1,1)} &:= \E_0\E_+\E_-\E_0\F_+\E_+\un_{(\beta-1,\beta-1,1,1)}
\\
\R\un_{(\beta-1,\beta-1,1,1)} &:= \E_0\E_+\E_-\E_0\F_-\E_-\un_{(\beta-1,\beta-1,1,1)} 
\end{align*}
By \cref{eq:catFiEj} we have the following isomorphism
\begin{align*}\allowdisplaybreaks
\Le\un_{(\beta-1,\beta-1,1,1)} &= \E_0\E_+\E_-\E_0\F_+\E_+\un_{(\beta-1,\beta-1,1,1)}
\\ &
\cong \E_0\E_+\F_+  \E_-\E_0\E_+\un_{(\beta-1,\beta-1,1,1)} ,
\end{align*}
and by \cref{eq:irredDirectSum} we have
\begin{align*}
\E_+\F_+ \un_{(\beta,\beta-1,1,0)} &\cong  \un_{(\beta,\beta-1,1,0)}  \oplus \F_+\E_+ \un_{(\beta,\beta-1,1,0)} \\
&\cong \un_{(\beta,\beta-1,1,0)} 
\end{align*}
since $\E_+ \un_{(\beta,\beta-1,1,0)} \cong 0$ by weight reasons. 
Therefore, we obtain
\begin{align*}
\Le\un_{(\beta-1,\beta-1,1,1)} &
\cong \E_0\E_-\E_0\E_+\un_{(\beta-1,\beta-1,1,1)} 
\\ &
\overset{\eqref{eq:catSerreDiv}}{\cong} \E_0^{(2)}\E_-\E_+\un_{(\beta-1,\beta-1,1,1)},
\end{align*}
since again $\E_0^{(2)}\E_+\un_{(\beta-1,\beta-1,1,1)} \cong 0$ by weight reasons.
Similarly, we obtain
\begin{align*}
\R\un_{(\beta-1,\beta-1,1,1)} &= \E_0\E_+\E_-\E_0\F_-\E_-\un_{(\beta-1,\beta-1,1,1)}
\\ &
\cong \E_0\E_+\E_0\E_-\un_{(\beta-1,\beta-1,1,1)}
\\ &
\cong \E_0^{(2)}\E_+\E_-\un_{(\beta-1,\beta-1,1,1)} .
\end{align*}
Thus, $\Le\un_{(\beta-1,\beta-1,1,1)}  \cong \R\un_{(\beta-1,\beta-1,1,1)}$. 
Moreover, since the isomorphisms $\Le\un_{(\beta-1,\beta-1,1,1)} \cong  \E_0^{(2)}\E_-\E_+\un_{(\beta-1,\beta-1,1,1)}$ and $\R\un_{(\beta-1,\beta-1,1,1)} \cong \E_0^{(2)}\E_+\E_-\un_{(\beta-1,\beta-1,1,1)} $ are obtained from similar operations on diagrams (exchanging the role of the colors $1$ and $-1$), it means we obtain a commutative diagram:
\[
\begin{tikzcd}[column sep=-4ex]
\Le\un_{(\beta-1,\beta-1,1,1)} 
\ar{rr}{\simeq}
&&
\R\un_{(\beta-1,\beta-1,1,1)}
\\
&
\E_0\E_+\E_-\E_0\un_{(\beta-1,\beta-1,1,1)} 
\ar[leftarrow]{ul}{\varepsilon_+}
\ar[leftarrow, swap]{ur}{\varepsilon_-}
&
\end{tikzcd}
\]
Thus, we obtain the wanted isomorphism in \cref{eq:Bitraceup}.
The proofs for the bottom part and for $\sigma_{\pm 1}^{-1}$ are similar. 
\end{proof}

\begin{prop}\label{prop:CmarkovII}
  The homotopy type of the chain complex $C'(\cl(b))$ is invariant under the Markov of type II, up to a global $\lambda$-degree shift. 
\end{prop}

\begin{proof}
Consider diagrams $D_0$ and $D_1^+$ that differ as below:
\begin{align*}
  D_1^+ &= 
  \raisebox{-20pt}{
    \reflectbox{
\begin{tikzpicture}[scale=.3]
        \draw [very thick] (1.5,-1.5)-- (-1.5,1.5);
	\draw [very thick] (0.5,0.5)  -- (1.5,1.5);
	\draw [very thick] (-1.5,-1.5)  -- (-0.5,-0.5);
        \draw [very thick,directed=.6] (4.5,1.5 ) -- (4.5,-1.5);
	\draw [very thick] (4.5,-1.5)  -- (1.5,-1.5);
        \draw [very thick] (4.5, 1.5)  -- (1.5, 1.5);
 \draw [very thick, dotted] (1.5,1.8) -- (1.5,2.4);
 \draw [very thick, dotted] (4.5,1.8) -- (4.5,2.4);
 \draw [very thick,->] (-1.5,1.5) -- (-1.5,2.4);
 \draw [very thick, dotted] (1.5,-2.4)  -- (1.5,-1.8);
 \draw [very thick, dotted] (4.5,-2.4)  -- (4.5,-1.8);
 \draw [very thick] (-1.5,-2.4) -- (-1.5,-1.5);
\end{tikzpicture}}}
&
D_0 &=
\raisebox{-20pt}{
  \reflectbox{
\begin{tikzpicture}[scale=.3]
	\draw [very thick,->] (-1.5,-2.4) -- (-1.5,2.4);
	\draw [very thick,dotted] (1.5,-2.4) -- (1.5,2.4);
       	\draw [very thick,dotted] (4.5,-2.4) -- (4.5,2.4);
\end{tikzpicture}}}
\end{align*}
The complex for $D_1^+$ is
 \[
 C'(D_1^+) \cong \cone\left( \E_0 \F_{1}\E_{1} \F_0\un_{(\beta-0,0,1)}
 \xrightarrow{\ \ \varepsilon_1 \ \ }  q^{-1}\E_0\F_0\un_{(\beta-0,0,1)} \right),
\]
where we need to think of $(\beta-0,0,1)$ as living inside a bigger $\nu$ depending on the global diagram.

We obtain an isomorphism 
(note that in this case $\F_0\E_0\un_{(\beta-0,1,0)}$ is zero)  
\begin{equation}
\begin{split}
  \E_0 \F_{1}\E_{1} \F_0\un_{(\beta-0,0,1)}
  &\overset{\eqref{eq:catFiEj}}{\cong}  \F_1\E_0\F_0\E_1 \un_{(\beta-0,0,1)}
  \\
  &\overset{\eqref{eq:vermaSES}}{\cong} \oplus_{[\beta-1]_q}  \F_1\E_1 \un_{(\beta-0,0,1)}
  \\
  &\overset{\eqref{eq:irredDirectSum}}{\cong}   q^{2}\lambda^{-1} \bigl( \Bbbk[\xi]  \otimes \un_{(\beta-0,0,1)}\bigr) \oplus  \lambda\bigl(\Bbbk[\xi] \otimes \un_{(\beta-0,0,1)}\bigr)[1],  
  \end{split}
  \label{eq:isoE0F1E1F0}
\end{equation}
with $\deg(\xi) = q^2$. 

This means we can think of the diagrams in $\E_0 \F_{1}\E_{1} \F_0\un_{(\beta-0,0,1)}$ as being all of the form:
\begin{align}
&\tikzdiag[xscale=.65]{
	\draw (0,-1) -- (0,1);
	\draw (.5,-1) -- (.5,1);
	\draw (1.5,-1) -- (1.5,1);
	\draw (2,-1) -- (2,1);
	\draw (3.25,-1)  node [right] {\small $i$} .. controls (3,-1) ..
		(3,-.75) .. controls (3,-.5) and (2.5,-.5) .. (2.5,-.25) -- (2.5,.25)node[midway, tikzdot]{} node[midway,xshift=1.5ex, yshift=.5ex]{\small $\ell$}
		 .. controls (2.5,.5) and (3,.5) .. 
		(3,.75) .. controls (3,1) .. (3.25,1) node [right] {\small $i$};
	\filldraw [fill=white, draw=black] (-.25,-.25) rectangle (2.25,.25) node[midway] {\small $\nu$};
	\node at(1,.65) {\small $\dots$};
	\node at(1,-.65) {\small $\dots$};
	\draw[myblue] (2.5,-1) node[below]{\small $1$} -- (2.5,-.75) .. controls (2.5,-.5) .. (3.25,-.25);
	\draw[myblue] (2.5,1) node[above]{\small $1$} -- (2.5,.75) .. controls (2.5,.5) .. (3.25,.25);
}
&
&\tikzdiag[xscale=.65]{
	\draw (0,-1) -- (0,-.75) .. controls (0,-.5) and (1,-.5) .. (1,-.25) .. controls (1,0) and (0,0) .. (0,.25)
		-- (0,1);
	\draw (.5,-1) -- (.5,-.75) .. controls (.5,-.5) and (1.5,-.5) .. (1.5,-.25) .. controls (1.5,0) and (.5,0) .. (.5,.25)
		-- (.5,1);
	\draw (1.5,-1) -- (1.5,-.75) .. controls (1.5,-.5) and (2.5,-.5) .. (2.5,-.25) .. controls (2.5,0) and (1.5,0) .. (1.5,.25)
		-- (1.5,1);
	\draw (2,-1) -- (2,-.75) .. controls (2,-.5) and (3,-.5) .. (3,-.25) .. controls (3,0) and (2,0) .. (2,.25)
		-- (2,1);
	\draw (3,-1) node[below]{\small $i$} .. controls (2.75,-.5) and (0,-.5) .. (0,-.25) node[pos=1,tikzdot]{} node[pos=1,xshift=-1.5ex]{\small $\ell$}
		 .. controls (0,0) and (3,0) .. (3,.25)
		--  (3,.75) 
		.. controls (3,1) .. (3.25,1) node [right] {\small $i$};
	\draw[myblue] (2.5,-1) node[below]{\small $1$} .. controls (2.5,-.75) .. (3.25,-.5);
	\draw[myblue] (3.25,0) .. controls (2.5,.25) .. (2.5,.5) -- (2.5,1) node[above]{\small $1$};
 	\fdot{.6,-.25};
	\node at(1,.9) {\small $\dots$};
	\node at(1,-.9) {\small $\dots$};
	\node at(2,-.25) {\small $\dots$};
	\filldraw [fill=white, draw=black] (-.25,.25) rectangle (2.25,.75) node[midway] {\small $\nu$};
}
\label{eq:basisE0F1E1F0}
\end{align}
for $\ell \geq 0$, and where $\un_{(\beta-0,0,1)} \cong R_\p^\beta(\nu)$. Moreover, we identify $\xi^\ell$ with a dot with label $\ell$. 

Applying the short exact sequence~\cref{eq:vermaSES}  to the term on the right gives 
\[
q^{-1}\E_0\F_0\un_{(\beta-0,0,1)} \cong \lambda^{-1} \bigl(\Bbbk[\xi] \otimes \un_{(\beta-0,0,1)}\bigr) \oplus  \lambda  \bigl(\Bbbk[\xi] \otimes \un_{(\beta-0,0,1)}\bigr)[1].
\]
In terms of diagrams, we get from \cref{eq:diagprojection} that $\E_0\F_0\un_{(\beta-0,0,1)}$ has a basis given by:
\begin{align}
&\tikzdiag[xscale=.65]{
	\draw (0,-1) -- (0,1);
	\draw (.5,-1) -- (.5,1);
	\draw (1.5,-1) -- (1.5,1);
	\draw (2,-1) -- (2,1);
	\draw (3.25,-1)  node [right] {\small $i$} .. controls (3,-1) ..
		(3,-.75) -- (3,.75)node[midway, tikzdot]{} node[midway,xshift=1.5ex, yshift=.5ex]{\small $\ell$}
		..controls (3,1) .. (3.25,1) node [right] {\small $i$};
	\filldraw [fill=white, draw=black] (-.25,-.25) rectangle (2.25,.25) node[midway] {\small $\nu$};
	\node at(1,.65) {\small $\dots$};
	\node at(1,-.65) {\small $\dots$};
	\draw[myblue] (2.5,-1) node[below]{\small $1$} -- (2.5,1);
}
&
&\tikzdiag[xscale=.65]{
	\draw (0,-1) -- (0,-.75) .. controls (0,-.5) and (1,-.5) .. (1,-.25) .. controls (1,0) and (0,0) .. (0,.25)
		-- (0,1);
	\draw (.5,-1) -- (.5,-.75) .. controls (.5,-.5) and (1.5,-.5) .. (1.5,-.25) .. controls (1.5,0) and (.5,0) .. (.5,.25)
		-- (.5,1);
	\draw (1.5,-1) -- (1.5,-.75) .. controls (1.5,-.5) and (2.5,-.5) .. (2.5,-.25) .. controls (2.5,0) and (1.5,0) .. (1.5,.25)
		-- (1.5,1);
	\draw (2,-1) -- (2,-.75) .. controls (2,-.5) and (3,-.5) .. (3,-.25) .. controls (3,0) and (2,0) .. (2,.25)
		-- (2,1);
	\draw (3,-1) node[below]{\small $i$} .. controls (2.75,-.5) and (0,-.5) .. (0,-.25) node[pos=1,tikzdot]{} node[pos=1,xshift=-1.5ex]{\small $\ell$}
		 .. controls (0,0) and (3,0) .. (3,.25)
		--  (3,.75) 
		.. controls (3,1) .. (3.25,1) node [right] {\small $i$};
	\draw[myblue] (2.5,-1) node[below]{\small $1$} -- (2.5,-.75) .. controls (2.5,-.5) and (3.5,-.5) .. (3.5,-.25)
		.. controls (3.5,0) and (2.5,0) .. (2.5,.25) -- (2.5,1) node[above]{\small $1$};
 	\fdot{.6,-.25};
	\node at(1,.9) {\small $\dots$};
	\node at(1,-.9) {\small $\dots$};
	\node at(2,-.25) {\small $\dots$};
	\filldraw [fill=white, draw=black] (-.25,.25) rectangle (2.25,.75) node[midway] {\small $\nu$};
}
\label{eq:basisE0F0}
\end{align} 
for all $\ell \geq 0$.

Because of \cref{eq:KLRR2}, applying $\varepsilon_1$ on \cref{eq:basisE0F1E1F0} gives that 
 $C'(D_1^+)$ is isomorphic to the complex 
\[
C'(D_1^+) \cong
\begin{pmatrix}
 q^2 \lambda^{-1} (\Bbbk[\xi] \otimes \un_{(\beta-0,0,1)}) \\[1ex] \lambda (\Bbbk[\xi] \otimes \un_{(\beta-0,0,1)})[1] 
\end{pmatrix} 
\xrightarrow{\ \ \psi\ \ }
\begin{pmatrix}
 \lambda^{-1}( \Bbbk[\xi] \otimes \un_{(\beta-0,0,1)}) \\[1ex]\lambda (\Bbbk[\xi] \otimes \un_{(\beta-0,0,1)})[1] 
\end{pmatrix} ,
\]
with differential 
\[
\psi :=\begin{pmatrix}
   \xi \otimes 1 + 1 \otimes f & 0
  \\
  0 & 1,
\end{pmatrix}.
\]
for some map $f : q^2 \un_{(\beta-0,0,1)} \rightarrow \un_{(\beta-0,0,1)}$. 
After the removal of all acyclic subcomplexes, we get that $C'(D_1^+)$ is homotopy equivalent to the complex 
\[
C'(D_1^+) \cong 0\to \lambda^{-1}\Bbbk\un_{(\beta-0,0,1)}.
\]
Therefore, we have that the complexes $C'(D_1^+)$ and
$\lambda^{-1} C'(D_0)$
are homotopy equivalent.

\smallskip

Similarly for  a diagram $D_1^-$ containing a negative crossing, we first show that
\begin{align*}
 \E_0 \E_1 \F_1 \F_0 \un_{(\beta-0,0,1)}[-1] &\overset{\eqref{eq:irredDirectSum}}{\cong}  \E_0 \F_1 \E_1 \F_0 \un_{(\beta-0,0,1)}[-1] 
 \\
 &\overset{\eqref{eq:isoE0F1E1F0}}{\cong} q^2 \lambda^{-1} (\Bbbk[\xi] \otimes \un_{(\beta-0,0,1)}) [-1] \oplus \lambda (\Bbbk[\xi] \otimes \un_{(\beta-0,0,1)}.
\end{align*}
This means that $ \E_0 \E_1 \F_1 \F_0 \un_{(\beta-0,0,1)}[-1]$ is given by diagrams of the form:

\begin{align}
&
\tikzdiag[xscale=.65]{
	\draw (0,-1) -- (0,1);
	\draw (.5,-1) -- (.5,1);
	\draw (1.5,-1) -- (1.5,1);
	\draw (2,-1) -- (2,1);
	\draw (3.25,-1)  node [right] {\small $i$} .. controls (3,-1) ..
		(3,-.75) -- (3,-.5)
		.. controls (3,-.25) and (2.5,-.25) .. (2.5,0) node[pos=1, tikzdot]{} node[pos=1,xshift=1.25ex, yshift=0ex]{\small $\ell$}
		.. controls (2.5,.25) and (3,.25) .. 
		 (3,.5) -- (3,.75)
		..controls (3,1) .. (3.25,1) node [right] {\small $i$};
	\draw[myblue] (3.75,-.75)  node [right] {\small $1$} .. controls (3.5,-.75) ..(3.5,-.5) 
		.. controls (3.5,.25) and (2.5,.25) .. 
		(2.5,.5)  -- (2.5,1);
	\draw[myblue] (2.5,-1) node[below]{\small $1$}  -- (2.5,-.5)
		.. controls (2.5,-.25) and (3.5,-.25) ..
		(3.5,.5) ..controls (3.5,.75) .. (3.75,.75) node [right] {\small $1$};
	\filldraw [fill=white, draw=black] (-.25,-.25) rectangle (2.25,.25) node[midway] {\small $\nu$};
	\node at(1,.65) {\small $\dots$};
	\node at(1,-.65) {\small $\dots$};
}
&
&
\tikzdiag[xscale=.65]{
	\draw (0,-1) -- (0,-.75) .. controls (0,-.5) and (1,-.5) .. (1,-.25) .. controls (1,0) and (0,0) .. (0,.25)
		-- (0,1);
	\draw (.5,-1) -- (.5,-.75) .. controls (.5,-.5) and (1.5,-.5) .. (1.5,-.25) .. controls (1.5,0) and (.5,0) .. (.5,.25)
		-- (.5,1);
	\draw (1.5,-1) -- (1.5,-.75) .. controls (1.5,-.5) and (2.5,-.5) .. (2.5,-.25) .. controls (2.5,0) and (1.5,0) .. (1.5,.25)
		-- (1.5,1);
	\draw (2,-1) -- (2,-.75) .. controls (2,-.5) and (3,-.5) .. (3,-.25) .. controls (3,0) and (2,0) .. (2,.25)
		-- (2,1);
	\draw (3,-1) node[below]{\small $i$} .. controls (2.75,-.5) and (0,-.5) .. (0,-.25) node[pos=1,tikzdot]{} node[pos=1,xshift=-1.5ex]{\small $\ell$}
		 .. controls (0,0) and (3,0) .. (3,.25)
		--  (3,.75) 
		.. controls (3,1) .. (3.25,1) node [right] {\small $i$};
	\draw[myblue] (2.5,-1) node[below]{\small $1$} -- (2.5,-.75)
		.. controls (2.5,-.5) and (4,-.5) .. 
		 (4,.5) ..controls (4,.75) .. (4.25,.75) node [right] {\small $1$};
	\draw[myblue] (4.25,-.75)  node [right] {\small $1$} .. controls (4,-.75) ..(4,-.5)
		.. controls (4,0) and (2.5,0) ..
		 (2.5,.25) -- (2.5,1) node[above]{\small $1$};
 	\fdot{.6,-.25};
	\node at(1,.9) {\small $\dots$};
	\node at(1,-.9) {\small $\dots$};
	\node at(2,-.25) {\small $\dots$};
	\filldraw [fill=white, draw=black] (-.25,.25) rectangle (2.25,.75) node[midway] {\small $\nu$};
}
\label{eq:basisE0E1F1F0}
\end{align} 
for all $\ell \geq 0$. 
Moreover, we observe that in $ \E_0 \E_1 \F_1 \F_0 \un_{(\beta-0,0,1)}[-1]$ we have
\begin{align}
\tikzdiag[xscale=.65]{
	\draw (0,-1) -- (0,1);
	\draw (.5,-1) -- (.5,1);
	\draw (1.5,-1) -- (1.5,1);
	\draw (2,-1) -- (2,1);
	\draw (3.25,-1)  node [right] {\small $i$} .. controls (3,-1) ..
		(3,-.75) -- (3,.75)node[midway, tikzdot]{} node[midway,xshift=1ex, yshift=.75ex]{\small $\ell$}
		..controls (3,1) .. (3.25,1) node [right] {\small $i$};
	\draw[myblue] (3.75,-.75)  node [right] {\small $1$} .. controls (3.5,-.75) ..
		(3.5,-.5) -- (3.5,.5)
		..controls (3.5,.75) .. (3.75,.75) node [right] {\small $1$};
	\filldraw [fill=white, draw=black] (-.25,-.25) rectangle (2.25,.25) node[midway] {\small $\nu$};
	\node at(1,.65) {\small $\dots$};
	\node at(1,-.65) {\small $\dots$};
	\draw[myblue] (2.5,-1) node[below]{\small $1$} -- (2.5,1);
}
&=
\tikzdiag[xscale=.65]{
	\draw (0,-1) -- (0,1);
	\draw (.5,-1) -- (.5,1);
	\draw (1.5,-1) -- (1.5,1);
	\draw (2,-1) -- (2,1);
	\draw (3.25,-1)  node [right] {\small $i$} .. controls (3,-1) ..
		(3,-.75) -- (3,-.5)
		.. controls (3,-.25) and (2.5,-.25) .. (2.5,0) node[pos=1, tikzdot]{} node[pos=1,xshift=1.25ex, yshift=0ex]{\small $\ell$}
		.. controls (2.5,.25) and (3,.25) .. 
		 (3,.5) -- (3,.75)
		..controls (3,1) .. (3.25,1) node [right] {\small $i$};
	\draw[myblue] (3.75,-.75)  node [right] {\small $1$} .. controls (3.5,-.75) ..(3.5,-.5) 
		.. controls (3.5,.25) and (2.5,.25) .. 
		(2.5,.5)  -- (2.5,1);
	\draw[myblue] (2.5,-1) node[below]{\small $1$}  -- (2.5,-.5)
		.. controls (2.5,-.25) and (3.5,-.25) ..
		(3.5,.5) ..controls (3.5,.75) .. (3.75,.75) node [right] {\small $1$};
	\filldraw [fill=white, draw=black] (-.25,-.25) rectangle (2.25,.25) node[midway] {\small $\nu$};
	\node at(1,.65) {\small $\dots$};
	\node at(1,-.65) {\small $\dots$};
}
\label{eq:1b1tob11}
\end{align}
because of \cref{eq:KLRR3}, and the fact that two consecutive strands labeled by $1$ must be zero at this position (this is a consequence of the fact that $\F_1\F_1 \un_{(\beta-0,1,0)} \cong 0$ by weight reasons).
We also have
\begin{align*}
\tikzdiag[xscale=.65]{
	\draw (0,-1) -- (0,-.75) .. controls (0,-.5) and (1,-.5) .. (1,-.25) .. controls (1,0) and (0,0) .. (0,.25)
		-- (0,1);
	\draw (.5,-1) -- (.5,-.75) .. controls (.5,-.5) and (1.5,-.5) .. (1.5,-.25) .. controls (1.5,0) and (.5,0) .. (.5,.25)
		-- (.5,1);
	\draw (1.5,-1) -- (1.5,-.75) .. controls (1.5,-.5) and (2.5,-.5) .. (2.5,-.25) .. controls (2.5,0) and (1.5,0) .. (1.5,.25)
		-- (1.5,1);
	\draw (2,-1) -- (2,-.75) .. controls (2,-.5) and (3,-.5) .. (3,-.25) .. controls (3,0) and (2,0) .. (2,.25)
		-- (2,1);
	\draw (3,-1) node[below]{\small $i$} .. controls (2.75,-.5) and (0,-.5) .. (0,-.25) node[pos=1,tikzdot]{} node[pos=1,xshift=-1.5ex]{\small $\ell$}
		 .. controls (0,0) and (3,0) .. (3,.25)
		--  (3,.75) 
		.. controls (3,1) .. (3.25,1) node [right] {\small $i$};
	\draw[myblue] (2.5,-1) node[below]{\small $1$} -- (2.5,-.75) .. controls (2.5,-.5) and (3.5,-.5) .. (3.5,-.25)
		.. controls (3.5,0) and (2.5,0) .. (2.5,.25) -- (2.5,1) node[above]{\small $1$};
	\draw[myblue] (4.25,-.75)  node [right] {\small $1$} .. controls (4,-.75) ..
		(4,-.5) -- (4,.5)
		..controls (4,.75) .. (4.25,.75) node [right] {\small $1$};
 	\fdot{.6,-.25};
	\node at(1,.9) {\small $\dots$};
	\node at(1,-.9) {\small $\dots$};
	\node at(2,-.25) {\small $\dots$};
	\filldraw [fill=white, draw=black] (-.25,.25) rectangle (2.25,.75) node[midway] {\small $\nu$};
}
&\overset{\eqref{eq:1b1tob11}}{=}
\tikzdiag[xscale=.65]{
	\draw (0,-1) -- (0,-.75) .. controls (0,-.5) and (1,-.5) .. (1,-.25) .. controls (1,0) and (0,0) .. (0,.25)
		-- (0,1);
	\draw (.5,-1) -- (.5,-.75) .. controls (.5,-.5) and (1.5,-.5) .. (1.5,-.25) .. controls (1.5,0) and (.5,0) .. (.5,.25)
		-- (.5,1);
	\draw (1.5,-1) -- (1.5,-.75) .. controls (1.5,-.5) and (2.5,-.5) .. (2.5,-.25) .. controls (2.5,0) and (1.5,0) .. (1.5,.25)
		-- (1.5,1);
	\draw (2,-1) -- (2,-.75) .. controls (2,-.5) and (3,-.5) .. (3,-.25) .. controls (3,0) and (2,0) .. (2,.25)
		-- (2,1);
	\draw (3,-1) node[below]{\small $i$} .. controls (2.75,-.5) and (0,-.5) .. (0,-.25) node[pos=1,tikzdot]{} node[pos=1,xshift=-1.5ex]{\small $\ell$}
		 .. controls (0,0) and (3,0) .. 
		 (3,.25)
		  -- %.. controls (3,.375) and (2.75,.375) .. (2.75,.5) .. controls (2.75,.625) and (3,.625) ..
		 (3,.75) 
		.. controls (3,1) .. (3.25,1) node [right] {\small $i$};
	\draw[myblue] (2.5,-1) node[below]{\small $1$} -- (2.5,-.75) .. controls (2.5,-.5) and (3.5,-.5) .. (3.5,-.25)
		.. controls (3.5,0) and (2.5,0) .. (2.5,.25)
		.. controls (2.5,.375) .. (4.25,.75) node [right] {\small $1$};
		% -- (2.5,1) node[above]{\small $1$};
%
	\draw[myblue] (4.25,-.75)  node [right] {\small $1$} .. controls (4,-.75) ..
		(4,-.5) -- (4,.25)
		.. controls (4,.625) and (2.5,.625) .. (2.5,1) node[above]{\small $1$};
		%..controls (4,.75) .. (4.25,.75) node [right] {\small $1$};
%
 	\fdot{.6,-.25};
	\node at(1,.9) {\small $\dots$};
	\node at(1,-.9) {\small $\dots$};
	\node at(2,-.25) {\small $\dots$};
	\filldraw [fill=white, draw=black] (-.25,.25) rectangle (2.25,.75) node[midway] {\small $\nu$};
}
\equiv
\tikzdiag[xscale=.65]{
	\draw (0,-1) -- (0,-.75) .. controls (0,-.5) and (1,-.5) .. (1,-.25) .. controls (1,0) and (0,0) .. (0,.25)
		-- (0,1);
	\draw (.5,-1) -- (.5,-.75) .. controls (.5,-.5) and (1.5,-.5) .. (1.5,-.25) .. controls (1.5,0) and (.5,0) .. (.5,.25)
		-- (.5,1);
	\draw (1.5,-1) -- (1.5,-.75) .. controls (1.5,-.5) and (2.5,-.5) .. (2.5,-.25) .. controls (2.5,0) and (1.5,0) .. (1.5,.25)
		-- (1.5,1);
	\draw (2,-1) -- (2,-.75) .. controls (2,-.5) and (3,-.5) .. (3,-.25) .. controls (3,0) and (2,0) .. (2,.25)
		-- (2,1);
	\draw (3,-1) node[below]{\small $i$} .. controls (2.75,-.5) and (0,-.5) .. (0,-.25) node[pos=1,tikzdot]{} node[pos=1,xshift=-3ex]{\small $\ell{+}1$}
		 .. controls (0,0) and (3,0) .. (3,.25)
		--  (3,.75) 
		.. controls (3,1) .. (3.25,1) node [right] {\small $i$};
	\draw[myblue] (2.5,-1) node[below]{\small $1$} -- (2.5,-.75)
		.. controls (2.5,-.5) and (4,-.5) .. 
		 (4,.5) ..controls (4,.75) .. (4.25,.75) node [right] {\small $1$};
	\draw[myblue] (4.25,-.75)  node [right] {\small $1$} .. controls (4,-.75) ..(4,-.5)
		.. controls (4,0) and (2.5,0) ..
		 (2.5,.25) -- (2.5,1) node[above]{\small $1$};
 	\fdot{.6,-.25};
	\node at(1,.9) {\small $\dots$};
	\node at(1,-.9) {\small $\dots$};
	\node at(2,-.25) {\small $\dots$};
	\filldraw [fill=white, draw=black] (-.25,.25) rectangle (2.25,.75) node[midway] {\small $\nu$};
}
\end{align*} 
where the symbol $\equiv$ means equality up to adding terms with less than $\ell+1$ dots next to the floating dot, or of the form in~\cref{eq:basisE0E1F1F0} left. 
This equality follows from applying~\cref{eq:KLRR2}, and then a sequence of \cref{eq:KLRdotslide} and \cref{eq:KLRnh} to slide down the newly spawned dot on the $i$-strand. 
Furthermore,  as before, $q \E_0 \F_0 \un_{(\beta-0,0,1)}[-1]$ is given by the same diagrams as in \cref{eq:basisE0F0}. Thus, applying $\eta_1$ on them gives an isomorphism of complexes
\begin{align*}
D_1^- &=
q \E_0 \F_0 \un_{(\beta-0,0,1)}[-1] \xrightarrow{\eta_1} \E_0 \E_1 \F_1 \F_0 \un_{(\beta-0,0,1)}[-1] 
 \\ 
&\cong \begin{pmatrix}
  q^2 \lambda^{-1} (\Bbbk[\xi]  \otimes \un_{(\beta-0,0,1)}) [-1] \\[1ex]  q^2 \lambda ( \Bbbk[\xi] \otimes \un_{(\beta-0,0,1)})  
\end{pmatrix} 
\xrightarrow{\ \ \bar \psi  \ \ }
\begin{pmatrix}
 q^2 \lambda^{-1} (\Bbbk[\xi] \otimes \un_{(\beta-0,0,1)}) [-1] \\[1ex]\lambda (\Bbbk[\xi] \otimes \un_{(\beta-0,0,1)} )
\end{pmatrix},
\end{align*}
where
\[
\bar \psi := \begin{pmatrix}1 & g' \\ 0 & \xi \otimes 1 + 1 \otimes g \end{pmatrix},
\]
for some $g :  q^2 \un_{(\beta-0,0,1)} \rightarrow \un_{(\beta-0,0,1)}$ and $g' : q^2 \lambda ( \Bbbk[\xi] \otimes \un_{(\beta-0,0,1)})   \rightarrow  q^2 \lambda^{-1} (\Bbbk[\xi] \otimes \un_{(\beta-0,0,1)}) [-1]$. 
The last complex is homotopy equivalent to the complex
\begin{align*}
0 \to   \lambda \un_{(\beta-0,0,1)}, 
\end{align*}
so that
$C'(D_1^-)$ and $\lambda C'(D_0)$ are homotopy equivalent.
\end{proof}

Define the \emph{normalized} chain complex 
$C(\cl(b)):= \lambda^{n_+ - n_-} C'(\cl(b))$
where as usual, $n_\pm$ is the number of positive/negative crossings in $\cl(b)$.

\begin{cor}\label{cor:eulerchar}
  The homology groups $H(b)$ of $C( \cl(b) )$ are triply-graded link invariants, 
  and their bigraded Euler characteristic is the HOMFLY-PT polynomial of the closure of $b$. 
\end{cor}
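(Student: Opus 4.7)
The plan is to reduce both assertions to results already at hand. For the triply-graded invariance, \fullref{prop:Cmarkov} tells us that the homotopy type of $C'(\cl(b))$ is invariant under the Markov moves up to an overall shift in the $(q,\lambda)$-grading and in parity; the computation in its proof pins down these shifts as $q\lambda^{-1}$ for positive stabilization and $q^{-1}\lambda\Pi[1]$ for negative stabilization. A direct bookkeeping shows that the normalization $\langle n_+-n_-, n_--n_+\rangle$ absorbs the $q^{\pm 1}\lambda^{\mp 1}$ shifts, while the total homological degree $\deg_h=\deg_r-\deg_p$ of~\eqref{eq:hdeg} absorbs the parity shift into a homological one. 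Invariance under Reidemeister~II, Reidemeister~III, and conjugation (the first Markov move) then comes from the fact, standard for Rickard complexes, that our assignment factors through $Kom(\dot{\mathcal U}(\mathfrak{sl}_n\times\mathfrak{sl}_n))$ acting via~\fullref{lem:KLR2KM-2M}, combined with~\fullref{prop:Cmarkov}. Taking cohomology produces a link invariant $H(b)$.

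For the Euler-characteristic statement, I would pass to the topological Grothendieck group. By~\fullref{thm:pcatthm}, after the specialization $\pi=-1$, one has
\[
\boldsymbol G_0(R^\beta_\p)\;\cong\; M^\p(\beta)
\]
as $U_q(\glnn)$-modules, and the functors $\F_i^\beta$, $\E_i^\beta$ descend to the generators $F_i$, $E_i$. Since our Rickard complex is truncated to two terms, the class in the Grothendieck group of the Rickard complex of a positive crossing between two strands (both of local weight $1$) equals $1-q^{-1}\,E_iF_i$ in $\End_{U_q(\glnn)}(M^\p(\beta))$, which up to the standard conventions coincides with the element $q^{-1}\,1 - F_iE_i\,1$ assigned to $\sigma_i$ in~\S\ref{ssec:linkinvs}; a dual computation handles $\sigma_i^{-1}$. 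The classes of the cups and caps in~\eqref{eq:closingbraid} correspond to successive applications of $\F_0$'s and $\E_0$'s (via the isomorphism of~\fullref{prop:isowpar}). Consequently the class of $C'(\cl(b))$ equals precisely the endomorphism $P^\p(b)$ of~\fullref{thm:homflyptMp}, while the normalization $\langle n_+-n_-,n_--n_+\rangle$ is exactly multiplication by $\lambda^{w(\cl(b))}$. Therefore the bigraded Euler characteristic of $C(\cl(b))$ is $\lambda^{w(\cl(b))}P^\p(b)$, which by~\fullref{thm:homflyptMp} is the HOMFLY-PT polynomial of the closure of $b$.

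The main obstacle is the bookkeeping of the three gradings together with the parity. One has to verify carefully that the parity shift $\Pi$ appearing in the proof of the second Markov move and in the formula for $[\mathcal T_i]$ does, via~\eqref{eq:hdeg}, contribute the correct sign in the Euler characteristic, and that the $(q,\lambda)$-shifts coming from the normalization and from the cup/cap compositions match the Queffelec--Sartori conventions used in~\S\ref{ssec:linkinvs}. Once the conventions are aligned, both statements are an immediate consequence of~\fullref{prop:Cmarkov}, \fullref{thm:pcatthm}, and \fullref{thm:homflyptMp}.
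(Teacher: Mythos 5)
Your argument is essentially the paper's own (implicit) proof: invariance follows from \fullref{prop:Cmarkov} together with the normalization $\brak{n_+-n_-,n_--n_+}$ and the total degree $\deg_h=\deg_r-\deg_p$, and the Euler characteristic is computed by decategorifying via \fullref{thm:pcatthm} to the Queffelec--Sartori element $P^\p(b)$ and invoking \fullref{thm:homflyptMp}. The only slip is cosmetic: in the paper's proof of \fullref{prop:Cmarkov} the shift $q\lambda^{-1}$ arises for the \emph{negative} stabilization $D_1^-$ and $q^{-1}\lambda\Pi[1]$ for the \emph{positive} one $D_1^+$ (you swapped the two), but since both are absorbed exactly as you describe, the conclusion is unaffected.
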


%%%%%%%%%%%%%%%%%%%%%%%%%%%%%%%%%%%%
\subsection{$H(b)$ is isomorphic to the Khovanov-Rozansky HOMFLY-PT link homology}

We now show that our link homology is equivalent to the
HOMFLY-PT link homology by Khovanov and Rozansky~\cite{KR2,Kh}, 
by proving that $H(b)$ is isomorphic to Rasmussen's version of
HOMFLY-PT homology in~\cite{rasmussen}.  

\begin{thm}\label{thm:HisoKR}
  For every braid $b$ the homology, 
  $H(b)$ is isomorphic to Khovanov-Rozansky HOMFLY-PT link homology $\HKR(b)$, after regrading. 
\end{thm}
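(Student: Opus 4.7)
The plan is to match $C(\cl(b))$ with Khovanov's construction of HOMFLY-PT homology via Hochschild homology of Rouquier's complex of Soergel bimodules (which, by~\cite{Kh}, is equivalent to Khovanov and Rozansky's matrix factorization version), proceeding in three stages.

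First, using the 2-action of $\dot{\mathcal{U}}(\mathfrak{sl}_n \times \mathfrak{sl}_n)$ on $\tM^{\p}(\beta)$ from~\fullref{lem:KLR2KM-2M}, I would identify the full 2-subcategory generated by the functors $\E_i \F_i \un_\mu$ at the weights $\mu = (\dotsc, 1, 1, \dotsc)$ (those appearing around every crossing in the closure diagrams of~\eqref{eq:closingbraid}) with a known categorification of the Hecke algebra sitting inside the categorified $q$-Schur algebra $\mathcal{S}(0,n)$ of Mackaay--Sto\v{s}i\'c--Vaz. The commutativity $\F_i \E_i \un_\mu \cong \E_i \F_i \un_\mu$ at these weights (which we already used to simplify the Rickard complex) is exactly the statement that the relevant $1$-morphisms categorify Bott--Samelson generators of the Hecke algebra.

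Second, I would show that our two-term Rickard complex $\un_\mu \to q^{-1} \E_i \F_i \un_\mu$ (and its negative-crossing analogue) matches, under this identification, the standard Rouquier complex associated to the corresponding elementary braid generator. Both are characterized up to homotopy as the cone of the unit of the adjunction $\F_i \dashv \E_i$, with the correct grading shift, so they are canonically homotopy equivalent; the braid invariance from~\fullref{prop:Cmarkov} is then consistent with Rouquier's.

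Third, I would identify the closure procedure of~\eqref{eq:closingbraid} with Hochschild homology. Closing up the braid via the prescribed pattern of cups and caps produces an endomorphism of $\un_{((\beta)^n,(0)^n)}$, i.e.\ a complex of $\Bbbk$-vector spaces; using the biadjointness of $\F_i$ and $\E_i$ (up to grading shift), this categorical trace collapses, strand by strand, onto a tensor product over Soergel bimodules computing $\HH_*$ of the underlying Rouquier complex, yielding $\HKR(b)$ after the overall normalizing shift.

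The main obstacle is the third step: turning the categorical trace into a chain-level identification with $\HH_*$, rather than merely an agreement of Euler characteristics (the latter already being guaranteed by~\fullref{cor:eulerchar} together with Khovanov's theorem that $\HH_*$ of Rouquier's complex decategorifies to the HOMFLY-PT polynomial). A clean way around this is to invoke a uniqueness principle: since both $H(b)$ and $\HKR(b)$ are link invariants, carry compatible families of differentials $d_N$ (for our side this comes from~\fullref{prop:dNwelldefined} and~\fullref{thm:pcatthmN}, identifying the $d_N$-homology with Khovanov--Rozansky's $\glN$-homology via the Mackaay--Yonezawa comparison), and share the same Euler characteristic, an argument in the spirit of Rasmussen's characterization of HOMFLY-PT homology in~\cite{rasmussen} produces the required isomorphism.
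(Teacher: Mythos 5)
Your route diverges from the paper's, and it has a genuine gap at exactly the point you flag. The chain-level identification of the closure procedure with Hochschild homology of Rouquier complexes (your third stage) is precisely what the authors do \emph{not} prove: immediately after \fullref{thm:HisoKR} they state it only as a conjecture (that the closure functor $\Phi$ is isomorphic to the Hochschild homology functor), so "biadjointness collapses the categorical trace strand by strand onto a tensor product over Soergel bimodules" cannot be taken for granted; it is the hard content. Your fallback does not repair this. There is no uniqueness theorem in~\cite{rasmussen} characterizing $\HKR$ by its Euler characteristic together with the existence of differentials $d_N$ whose homology is $\HKR_N$: having the same decategorification plus spectral sequences to the $\glN$ homologies does not pin down the triply-graded groups (extra summands killed by every $d_N$ and cancelling in the Euler characteristic would be invisible). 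Worse, the input you want to feed it --- that the $d_N$-homology of $H(b)$ \emph{is} $\HKR_N(b)$ --- is not available at this stage: \fullref{prop:dNwelldefined}, \fullref{thm:pcatthmN} and Mackaay--Yonezawa only give $H(H(C(L),d_N),d_r)\cong \HKR_N(L)$, i.e.\ a spectral sequence from $H(b)$ to $\HKR_N(b)$; the statement $H(H(b),d_N)\cong\HKR_N(b)$ is \fullref{thm:DGRconj}, whose proof in the paper uses \fullref{thm:HisoKR} to identify the first page, so invoking it here is circular (or requires independently redoing the dg-lifting and degeneration arguments of the last section).

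The paper's actual argument avoids Soergel bimodules and Hochschild homology altogether and instead runs Rasmussen's own framework inside the 2-Verma module: the key step is \fullref{lem:Hgfree}, that for every web $\Gamma$ the homology $H(\Gamma)$ is free over the base polynomial ring $S$, proved by the Wu--Rasmussen induction scheme once one checks the MOY relations $0$--$\mathrm{III}$; relations $\mathrm{II}$ and $\mathrm{III}$ hold already in $\dot{\mathcal{U}}(\sln)$, and $0$ and $\mathrm{I}$ come from the short exact sequence~\eqref{eq:EFrel-k} of \fullref{thm:sesFunct} evaluated at the relevant weights. Since the braiding in both constructions is the same Rickard/Rouquier complex, this freeness plus the MOY calculus forces $H(b)\cong\HKR(b)$ after matching gradings. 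If you want to salvage your approach, you would either have to prove the Hochschild-homology identification (the paper's conjecture) or supply an honest degeneration/stability argument replacing the nonexistent "uniqueness principle"; as written, the proposal does not prove the theorem.
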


The theorem above gives us an equivalence in a weak sense.
We conjecture the equivalence is in fact stronger, in the following sense. 
The Soergel category $\mathcal{SC}_n$ from~\cite{EK} acts on $\tM^{\p}(\beta)$, in particular, on its
$( (\beta-1)^n, (1)^n )$-weight space 
(this action goes through the categorified $q$-Schur algebra
$\mathcal{S}(n,n)$, see~\cite[\S6]{MSV2}, which also acts on $\tM^{\p}(\beta)$, as 
explained at the start of~\cref{sec:cat}.
Composing with the operation of closing the braid on the top with the correct sequence of $\E_i$'s,
following the pattern in~\cref{eq:closingbraid}, gives a functor $\Phi$
from $\mathcal{SC}_n$ to the category $Ab_{h,q,\lambda}$ of triply-graded abelian groups. 
Note that the same pattern, but with $\F_i$'s,
has to be used on the bottom of the diagram to create the weight on which $\mathcal{SC}_n$ acts.

\begin{conj}
  The functor $\Phi$ is isomorphic to the Hochschild homology functor. 
\end{conj}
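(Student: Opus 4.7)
The plan is to identify $\Phi$ as a derived trace of the $\mathcal{SC}_n$-action on $\tM^{\p}(\beta)$, and then match this trace with Hochschild homology via the Hochschild--Kostant--Rosenberg (HKR) isomorphism.

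First, I would make the $\mathcal{SC}_n$-action on the weight $((\beta-1)^n, (1)^n)$ explicit. By the remark after~\fullref{lem:KLR2KM-2M} and~\cite[\S6]{MSV2}, the action factors through the categorified $q$-Schur algebra $\mathcal{S}(n,n)$, and each Bott--Samelson $B_i \in \mathcal{SC}_n$ (for $i\in\{1,\ldots,n-1\}$) acts on $\un_{((\beta-1)^n,(1)^n)}$ as the endofunctor $\E_i\F_i$ up to a specified shift. Combined with the basis theorem~\fullref{thm:basis} and the polynomial action from~\S\ref{subsec:u2Verma}, one checks that the endomorphism dg-algebra of the generator of this weight block is quasi-isomorphic to $R \otimes_{\bZ} \bigwedge^\bullet\langle\omega_1,\ldots,\omega_n\rangle$, where $R = \bZ[x_1,\dots,x_n]$ is the classical Soergel polynomial ring; by HKR this is exactly $HH_\ast(R)$.

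Second, I would rewrite $\Phi(B)$ using adjunction. Writing $T$ for the composite of $\F$'s in the bottom closure pattern of~\eqref{eq:closingbraid}, the top pattern is an adjoint composite of $\E$'s, and using $\F_i \dashv \E_i$ together with $\E_i\F_i\cong\F_i\E_i$ at the weights appearing along the closure, one obtains
\[
\Phi(B) \;\cong\; \Hom_{\tM^{\p}(\beta)}\!\bigl(T\,\un_{((\beta)^n,(0)^n)},\; B\circ T\,\un_{((\beta)^n,(0)^n)}\bigr).
\]
By step one, $T\,\un_{((\beta)^n,(0)^n)}$ is a (topological) projective generator of the weight block with $\End(T\,\un_{((\beta)^n,(0)^n)}) \simeq R\otimes_\bZ \bigwedge^\bullet\langle\omega_i\rangle$, so this $\Hom$ yields a Morita-equivalent computation
\[
\Phi(B) \;\cong\; \mathrm{RHom}_{R\otimes R^{\mathrm{op}}}(R, B) \;\cong\; HH_\ast(B),
\]
the last identification being standard for $R$ smooth commutative.

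Third, I would upgrade this object-level isomorphism to a natural isomorphism of functors $\mathcal{SC}_n \to Ab_{q,\lambda}$. All ingredients (adjunctions, Morita equivalence, HKR) are manifestly natural in $B$, so what remains is tracking the triple grading: the $q$-grading matches via the shifts in the definitions of $\F_i$ and $\E_i$; the $\lambda$-grading from floating dots in $R_{\p}^{\beta}$ matches the $\bigwedge^\bullet$-direction on the $HH$ side; the parity on $\tM^{\p}(\beta)$ corresponds to the $\bZ/2$-reduction of the Hochschild degree, exactly as the parity of a floating dot tracks its $\omega$-weight in the polynomial action of~\S\ref{subsec:u2Verma}. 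The main obstacle will be the completion/topological issues: since $\tM^{\p}(\beta)$ categorifies a parabolic Verma module, the weight blocks are infinite-dimensional and the Morita equivalence of step one must be established in the completed setting of cone-bounded, locally finite-dimensional modules (or, dually, via derived categories of dg-modules over the relevant cyclotomic-like quotients). Verifying that the topological $\Hom$ appearing in the rewriting of $\Phi$ still computes $\mathrm{RHom}_{R\otimes R^{\mathrm{op}}}(R,B)$, and in particular does not introduce spurious completion terms that would break the HKR identification, is the genuinely delicate point; the formality statement~\fullref{prop:formalHbtp} and the explicit basis~\fullref{thm:basis} should provide the tools needed to carry it out.
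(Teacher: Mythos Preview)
The statement you are attempting to prove is a \emph{conjecture} in the paper, not a theorem: the paper states it and immediately moves on to prove~\fullref{thm:HisoKR} by other means. There is therefore no proof in the paper to compare your proposal against.

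As to the proposal itself, there is a genuine gap in step two. You write
\[
\Phi(B)\;\cong\;\mathrm{RHom}_{R\otimes R^{\mathrm{op}}}(R,B)\;\cong\;HH_\ast(B),
\]
but $\mathrm{RHom}_{R\otimes R^{\mathrm{op}}}(R,B)$ is Hochschild \emph{cohomology} $HH^\ast(R,B)$, not Hochschild homology $HH_\ast(R,B)=R\otimes^{\mathbf L}_{R\otimes R^{\mathrm{op}}}B$. For $R=\Bbbk[x_1,\dots,x_n]$ these are related by a Calabi--Yau duality $HH_\ast(R,B)\cong HH^{n-\ast}(R,B)$ with a grading shift, so the objects can be matched, but your argument does not invoke this and the grading bookkeeping in step three would then be off by this twist. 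More seriously, the adjunction manipulation you perform produces a $\Hom$, and it is not clear that the closing pattern of $\E$'s in~\eqref{eq:closingbraid} really yields the \emph{right} adjoint of the bottom $\F$-pattern $T$; the functors $\E_i$ are defined as shifted \emph{restrictions}, and the closure involves a specific zig-zag of weights that is not simply the adjoint of $T$. You would need to argue carefully that the composite of $\E$'s at the top is (a shift of) the left adjoint of $T$, so that $\Phi(B)$ is a derived \emph{tensor} rather than a derived $\Hom$---this is what would land you directly on $HH_\ast$.

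Finally, step one asserts that $\End(T\,\un_{((\beta)^n,(0)^n)})\simeq R\otimes\bigwedge^\bullet\langle\omega_i\rangle$ with $n$ exterior generators. In $R_\p^\beta$ only floating dots with subscript $0$ survive, and the number of such generators available at a given weight is governed by $\nu_0$, not by $n$ a priori; you would need to identify precisely which $\nu$ corresponds to $((\beta-1)^n,(1)^n)$ and compute the idempotent-truncated algebra there. This is plausible but not established anywhere in the paper, and it is the heart of the conjecture.
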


\medskip

We now prove~\cref{thm:HisoKR}.
We assume the reader is familiar with~\cite{rasmussen}. 
Let $L$ be a link presented as the closure of a braid $b$ in $n$ strands.
Recall that the process of closing $b$ amounts to composing 
a word in $\E_i$'s with the Rickard complex for $b$ 
(after adding $n$ parallel strands at its right) and with a word in $\F_i$'s. 
Of course, the closure procedure extends canonically to webs.  
Let $S$ be the (polynomial) ring in the dots on the $\F_0$'s used to form the closure of
a web $\Gamma$.

\begin{lem}\label{lem:Hgfree}
For every web $\Gamma$, $H(\Gamma)$ is a free module over $S$. 
\end{lem}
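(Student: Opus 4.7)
\medskip

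\noindent\textbf{Proof plan.} My plan is to show directly that $C(\cl(\Gamma))$ is, as a complex of $S$-modules, isomorphic to $S \otimes_{\bQ} C_0(\Gamma)$ for some bounded complex $C_0(\Gamma)$ of finite-dimensional graded $\bQ$-vector spaces, from which $H(\Gamma) \cong S \otimes_{\bQ} H(C_0(\Gamma))$ will be free over $S$.

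First I would check that the $S$-action is compatible with the chain differential. The ring $S$ acts on $C(\cl(\Gamma))$ by pre-composition with dots placed on the bottom $\F_0$-strands arising from the closure pattern~\eqref{eq:closingbraid}. Since dots slide past all KLR generators up to lower-order corrections (relations~\eqref{eq:klrR2}--\eqref{eq:sauronseye}, in particular~\eqref{eq:dotslide-nilH}), and since the Rickard differentials in $C'(\cl(\Gamma))$ are assembled from units and counits of the adjunction $\F_i \dashv \E_i$ which are natural in the strands strictly above the closure region, the $S$-action commutes with each of the component differentials. Thus $C(\cl(\Gamma))$ is genuinely a complex of $S$-modules.

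Second I would compute the underlying graded $S$-module structure explicitly by unfolding the closure inside $\tM^\p(\beta)$. Starting from the highest-weight object $\un_{((\beta)^n,(0)^n)}$, one has $\E_0 \un_{((\beta)^n,(0)^n)} = 0$, so the short exact sequence~\eqref{eq:EFrel-k} for the root $\alpha_0$ collapses to an isomorphism $\E_0 \F_0 \un \cong \oplus_{[\beta_0+\nu_{\pm 1}-2\nu_0]_q} \un$, which at the level of modules is exactly a free $\bQ[\xi]$-factor whose generator $\xi$ is identified with the dot on the corresponding $\F_0$-strand. Iterating this for each of the $\F_0$'s in the closure, and using the isomorphisms~\eqref{eq:EFrel-notk}, \eqref{eq:EFrel-notk2}, \eqref{eq:EFrel-notk2} and~\eqref{eq:serredgfunct} for the remaining roots (which only rearrange summands and never produce further polynomial factors), one obtains an identification of graded $S$-modules $C(\cl(\Gamma)) \cong S \otimes_{\bQ} C_0(\Gamma)$ where $C_0(\Gamma)$ is a bounded complex of finite-dimensional graded $\bQ$-vector spaces.

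Third, the Rickard differential at each crossing of $\Gamma$ is built from the counit $\varepsilon \colon \F_i \E_i \Rightarrow \id$ (for a positive crossing) or the unit $\eta$ (for a negative crossing) and therefore acts only on the two adjacent strands of the crossing, which lie far from the bottom $\F_0$-strands carrying the variables $\xi_j$. Consequently the differentials of $C(\cl(\Gamma))$ respect the factorisation $S \otimes_{\bQ} C_0(\Gamma)$, i.e.\ they are of the form $\id_S \otimes d_0$. Taking cohomology then gives $H(\Gamma) \cong S \otimes_\bQ H(C_0(\Gamma))$, which is manifestly a free $S$-module.

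The hard part will be organising this bookkeeping consistently: the isomorphism~\eqref{eq:EFrel-k} is a split short exact sequence, not literally a tensor decomposition, so the splittings used to extract the $\bQ[\xi_j]$-factor at each $\F_0$-strand must be chosen coherently across all terms in the Rickard complex and across every summand produced by the successive applications of~\eqref{eq:EFrel-notk}--\eqref{eq:serredgfunct}. This requires an induction on the number of closing $\F_0$-strands, using the multigrading (in particular, the $\lambda$-grading tracks exactly the number of times the $\alpha_0$-exact sequence has been invoked) to show that the splittings can be taken $S$-linearly and compatibly with the Rickard differentials. This is a direct combinatorial verification rather than a genuine conceptual difficulty.
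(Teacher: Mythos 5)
There is a genuine gap, and it sits exactly where you park it as ``bookkeeping''. Your plan rests on the claim that $C(\cl(\Gamma))\cong S\otimes_{\bQ}C_0(\Gamma)$ with the $S$-action being multiplication on the left tensor factor, obtained by ``unfolding'' the closure through the isomorphisms of \fullref{thm:sesFunct}. But those isomorphisms are natural transformations built from cups, caps, crossings and dots, and they do \emph{not} intertwine the dot on a given bottom $\F_0$-strand with multiplication by $\xi$ on the corresponding $\Bbbk[\xi]$-summand on the nose: dots slide through crossings and through the bimodule maps of \fullref{thm:sesRp} only up to correction terms (cf.~\eqref{eq:dotslide-nilH}, \eqref{eq:fdotsaroundXings}), so after simplification the induced $S$-action is at best triangular with respect to a filtration, not literally $\id_S\otimes(-)$. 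Choosing the splittings ``$S$-linearly and compatibly'' across all $\F_0$-strands and all summands is not a routine verification -- it is the whole content of the lemma, and asserting it by induction on the number of closing strands without a mechanism for controlling the correction terms leaves the proof incomplete. (A filtration/associated-graded argument could plausibly rescue it, but you would have to set that up explicitly.) There is also a smaller confusion: $\Gamma$ is a web, hence crossingless, so the discussion of ``the Rickard differential at each crossing of $\Gamma$'' is vacuous; the statement concerns only the $S$-module structure of a single bigraded space, and the easy fact that the $S$-action commutes with the link-complex differentials follows from functoriality (the action is by precomposition at the bottom, the differentials are natural transformations applied above), not from dot-sliding.

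For comparison, the paper avoids the global factorization altogether: it invokes Rasmussen's inductive argument (based on Wu's induction scheme), which reduces freeness over $S$ to checking that the local MOY decompositions $\mathrm{0}$--$\mathrm{III}$ hold in this setting. Relations $\mathrm{II}$ and $\mathrm{III}$ already hold in $\dot{\mathcal{U}}(\sln)$, and $\mathrm{0}$ and $\mathrm{I}$ follow from the short exact sequence~\eqref{eq:EFrel-k} at the weights $(\dotsc,\beta-0,0,\dotsc)$ and $(\dotsc,\beta-0,1,\dotsc)$, where one outer term vanishes and the sequence collapses to an isomorphism. The induction over MOY moves is precisely the device that replaces the coherent-splitting argument you would otherwise have to supply; if you want to keep your direct approach, you should either prove the $S$-equivariant splitting by a filtration argument or fold your computation into an induction of this type.
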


\begin{proof}
  The proof follows the same reasoning as the proof of Rasmussen of an analogous result using
  matrix factorizations~\cite[Proposition 4.8]{rasmussen} which is based on an induction scheme
  introduced by Wu~\cite[\S3]{wu}.
  The only thing we need to check are the MOY relations $\mathrm{0}$ to $\mathrm{III}$
  from \cite[\S4.2]{rasmussen}.  
  MOY relations $\mathrm{II}$ and $\mathrm{III}$ are already satisfied in
 $\dot{\mathcal{U}}(\sln)$, and
  MOY relations $\mathrm{0}$ and $\mathrm{I}$ are a direct consequence of the
  short exact sequence~\cref{eq:vermaSES}, when applied to the weights
  $(\dotsc, \beta-0, 0,\dotsc)$ and $(\dotsc, \beta-0, 1, \dotsc)$, 
  since one of the terms in the exact sequence always act as the zero functor on these weights.  
\end{proof}

\begin{proof}[Proof of~\cref{thm:HisoKR}]
  Since both our construction and the one in~\cite[Proposition 4.8]{rasmussen} satisfy the MOY relations, the underlying spaces of the complexes $C'(cl(b))$ and of $\HKR(b)$ are isomorphic by \cref{lem:Hgfree}. Moreover, the braiding in both constructions is the Rickard complex 
, and thus the complexes are equivalent. 
The regrading is given by identifying the $(i,j,k)$-grading of~\cite{rasmussen} as $q = i, \lambda = j$ and $h = (j-k)/2$. 
\end{proof}

%%%%%%%%%%%%%%%%%%%%%%%%%%%%%%%%%%%%%%%%%%%%%%%
\subsection{Khovanov-Rozansky's $\glN$-link homologies} \label{ssec:KRGLN}

Using the 2-representation of $\glnn$, con\-struc\-ted from the cyclotomic KLR algebra 
$R_\g^{( (N)^{n}, (0)^{n} )}$ as input instead of a 2-category similar to $\tM^{\p}(\beta)$, results in
Khovanov and Rozansky's $\glN$-link homology $H_N(L)$ from~\cite{KR1}. 
This follows at once from the work of Mackaay and Yo\-ne\-zawa~\cite{MY}.

%%%%%%%%%%%%%%%%%%%%%%%%%%%%%%%%%%%%%%%%%%%%%%%
\subsection{Reduced homologies}\label{sec:reducedhomology}

A modification of our construction could be used to give a construction of the reduced version of KR HOMFLY-PT link homology. 
Using the parabolic subalgebra $\rp\subseteq U_q(\mathfrak{gl}_{2n-1})$ and the highest weight 
$\bar \beta = ((\beta)^{n-1},1,(0)^{n-1})$ gives a 2-Verma module $\mathfrak{M}^{\rp}(\bar \beta)$. Constructing Rickard complexes with it should result in reduced versions $\overline{H}$ of $H$.
All the results in the preceding subsections have analogues for the case of reduced homology,
and should be proven essentially in the same way as above. 
However, there is one subtlety to take into account when claiming the equivalence with reduced
Khovanov--Rozansky homology.  
Recall that in the case of the reduced versions 
from~\cite{KR1,KR2,rasmussen} 
the abelian groups $\overline{H}(L,i)$ and $\overline{H}_N(L,i)$ are invariants of the link $L$  
together with a marked component $i$.
With our choice of cutting out and open a diagram of a braid closure in~\cref{ssec:normalized},
the outermost strand in our version (the one that is cut) corresponds to the preferred component $i$ of $\cl(b)$
(as described in~\cite{rasmussen})
under the isomorphism between our reduced homologies and Khovanov--Rozansky's.
Using the cyclotomic  KLR algebra $R^{( (N)^{n-1},1,(0)^{n-1} )}$ for $\mathfrak{gl}_{2n-1}$ should result in
a reduced version $\overline{H}_N$ of Khovanov--Rozansky's $\glN$-link homology $H_N$.

\subsection{HOMFLY-PT to $\glN$ spectral sequence}\label{sec:spectralSequence}

We explain how to construct a spectral sequence from HOMFLY-PT homology to $\gl_N$-homology in our context, for $N > 0$, akin to Rasmussen's spectral sequence in~\cite{rasmussen}.  

Recall from \cref{sec:cycloN} that the functors $\E_i^N$ and $\F_i^N$ are given by derived induction and restriction. Thus, they are adjoint and give rise to Rickard complexes of dg-bimodules (in other words, the maps $\eta_i$ and $\varepsilon_i$ from \cref{sec:braiding} are maps of dg-bimodules). Thus, we obtain a bicomplex $(C(L),d_r,d_N)$, where $d_r$ is the Rickard differential. 

To any bicomplex $(M,d',d'')$ we can associate two spectral sequences 
$\{E_\ell^I,d_\ell^I\}$ and $\{E_\ell^{II},d_\ell^{II}\}$,
which are induced by the two canonical filtrations.
Moreover, we have that $E_2^I = H( H(M,d''), d')$ and $E_2^{II} = H( H(M,d'), d'')$, and 
if the double complex is bounded, then both spectral sequences converge to the total homology
$H(M,d'+d'')$. 
We will also use the fact that if $H(M,d'')$ (resp. $H(M,d')$) is concentrated in a single $d''$-degree (resp. $d'$-degree), then $E^I$ (resp. $E^{II}$) converges at the second page, meaning that $E_2^I \cong H(M,d''+d')$ (resp. $E_2^{II} \cong H(M,d'+d'')$).

For a link $L$ presented in the form of a closure of a braid $b$, we form the bounded double complex
$( C(L),d_r, d_N )$. 
Let $\{E_\ell^I,d_\ell^I\}$ and $\{E_\ell^{II},d_\ell^{II}\}$ be respectively the spectral sequences induced by the $N$-filtration
and $r$-filtration.

Recall that a strongly projective  (see \cite{sixdgmodels} or \cite{naissevaz3} for a precise definition) left $(A,d_A)$-dg-module $(P,d_P)$ is such that for any right $(A,d_A)$-dg-module $(M,d_M)$ we have
\[
H\bigl((M,d_M) \otimes_{(A,d_A)} (P,d_P) \bigr) \cong H(M,d_M) \otimes_{H(A,d_A)} H(P,d_P).
\]
By \cite[Proposition 5.15]{naissevaz3}, we know that  $(R_\p^\beta(\nu+i),d_N)$ is strongly projective as $(R_\p^\beta(\nu),d_N)$-module. Thus, \cref{thm:formalRpN} tells us that $H(C(L),d_N)$ is  concentrated in a single $d_N$-degree. 
As a consequence, $E^{I}$ converges at the second page. Thus, we know that 
\[
H(C(L),d_N+d_r) \cong H(H(C(L),d_N), d_r) \cong \HKR_N(L),
\] 
thanks to~\cref{ssec:KRGLN}.
Thus, $\{E_r^I,d_r^I\}$ is a spectral sequence whose $E_1$-page is $H(C(L), d_r) \cong \HKR(L)$, 
which converges to $\HKR_N(L)$. 

Note that the spectral sequence in~\cite{rasmussen} is constructed for the reduced case, and that we can also introduce a $d_N$ on the reduced homology in~\cref{sec:reducedhomology} to fall in the same case. Then both spectral sequences share similar properties: they start from the same underlying spaces (up to isomorphism), with a $E_1$-page being (reduced) $\HKR(L)$ in one direction and converging at the second page in the other direction to (reduced) $\HKR_N(L)$.  

\subsection{Colored link homology}

A version of $\tM^{\p}(\beta)$ for divided powers of the $\F_i$'s and $\E_i$'s 
could be used to construct a version of HOMFLY-PT homology for links colored
by minuscule representations of $\mathfrak{gl}_N$, as the one constructed by Mackaay-Sto\v{s}i\'c-Vaz 
in~\cite{MSV} and Webster-Williamson~\cite{WW}. 
Moreover, the differential $d_N$ would give rise to a spectral sequence to colored 
$\mathfrak{gl}_N$-Khovanov--Rozansky link homology, as the one constructed by Wedrich in~\cite{W}.  
However, proving a version of the first exact sequence from~\cref{thm:catActionVerma} for divided powers 
might be a nontrivial problem.

%%%%%%%%%%%%%%%%%%%% End of File  %%%%%%%%%%%%%%%%%%%%
%
%  

%%% Local Variables:
%%% mode: latex
%%% TeX-master: "../homflypt-Verma"
%%% End:

%%%%%%%%%%%%%%%%%%%%%%
% Bibliography					  %
%%%%%%%%%%%%%%%%%%%%%%
%%%%%%%%%%%%%%%%%%%%%%%%%%%%%%%%%%%%
%                 					  				  		 %
%	Bibliography   				 					 %
%                 					  						 %
%%%%%%%%%%%%%%%%%%%%%%%%%%%%%%%%%%%%

\bibliographystyle{bibliography/habbrv}
%\bibliography{bibliography/mybib}

%%%%%%%%%%%%%%%%	End of file	%%%%%%%%%%%%%

\end{document}